\newcommand{\de}{\partial}
\newcommand{\ov}[1]{\overline{#1}}
\newcommand{\ddbar}{\sqrt{-1} \partial \overline{\partial}}
\newcommand{\Ric}{\mathrm{Ric}}
\newcommand{\vp}{\varphi}
\newcommand{\vol}{\mathrm{vol}}
\newcommand{\diam}{\mathrm{diam}}
\newcommand{\ve}{\varepsilon}
\newcommand{\e}{\varepsilon}
\newcommand{\p}{\partial}
\renewcommand{\e}{\varepsilon}
\newcommand{\un}[1]{\underline{#1}}
\newcommand{\dist}{\mathrm{dist}}
\newcommand{\ul}[1]{\underline{#1}}
\newcommand{\abs}[1]{\left\lvert#1\right\rvert}
\newcommand{\norm}[1]{\left\lVert#1\right\rVert}
\renewcommand{\leq}{\leqslant}
\renewcommand{\geq}{\geqslant}
\renewcommand{\le}{\leqslant}
\newcommand{\be}{\begin{equation}}
\newcommand{\ee}{\end{equation}}
\newcommand{\mSH}{m\mathrm{SH}}
\newcommand{\SH}{\mathrm{SH}}
\newcommand{\R}{\mathbb{R}}
\newcommand{\C}{\mathbb{C}}
\newcommand{\Hm}{\mathrm{H}^m}
\begin{document}
\newcounter{remark}
\newcounter{theor}
\setcounter{theor}{1}
\newtheorem{claim}{Claim}
\newtheorem{theorem}{Theorem}[section]
\newtheorem{lemma}[theorem]{Lemma}
\newtheorem{corollary}[theorem]{Corollary}
\newtheorem{proposition}[theorem]{Proposition}
\newtheorem{question}{Question}[section]
\newtheorem{goal}{Goal}[section]
\newtheorem{definition}[theorem]{Definition}
\newtheorem{remark}[theorem]{Remark}

\numberwithin{equation}{section}

\title[Eigenvalue Problem]{The eigenvalue problem for the complex hessian operator on $m$-pseudoconvex manifolds}

\author[J. Chu]{Jianchun Chu}
\address{School of Mathematical Sciences, Peking University, Yiheyuan Road 5, Beijing 100871, People's Republic of China}
\email{jianchunchu@math.pku.edu.cn}

\author[Y. Liu]{Yaxiong Liu}
\address{Department of Mathematics,
University of Maryland,
4176 Campus Dr,
College Park, MD 20742}
\email{yxliu238@umd.edu}

\author[N. McCleerey]{Nicholas McCleerey}
\address{Department of Mathematics,
Purdue University, West Lafayette
150 N University St
West Lafayette, IN 47907}
\email{nmccleer@purdue.edu}

\begin{abstract}
We establish $C^{1,1}$-regularity and uniqueness of the first eigenfunction of the complex Hessian operator on strongly $m$-pseudoconvex manifolds, along with a variational formula for the first eigenvalue. From these results, we derive a number of applications, including a bifurcation-type theorem and geometric bounds for the eigenvalue.
\end{abstract}

\subjclass[2020]{32W20 (Primary) 32U05, 35J65, 35J70 (Secondary)}

\maketitle

\section{Introduction}

\subsection{Main Results} 
A classical observation of Lions \cite{Lions85} is that the real Monge-Amp\`ere operator admits a well-defined first eigenvalue on a strictly convex domain with smooth boundary; somewhat surprisingly, this eigenvalue shares many properties with the first eigenvalue of a general linear elliptic operator, despite the fact that the Monge-Amp\`ere operator is fully non-linear. Since then, eigenvalues of non-linear operators have generated lots of interest and been studied by many authors (see Subsection \ref{Previous Works} below).

In this work, we will be concerned with the eigenvalue problem for certain complex operators, namely the $m$-Hessian operators, building off of a pair of recent papers by Badiane-Zeriahi  \cite{BZ23a, BZ23b}. Let us briefly recall some of their results.

Let $\Omega\subset\C^n$ be a smoothly bounded, sufficiently pseudoconvex domain, $\omega$ the Euclidean metric on $\C^n$, $m$ an integer $1\leq m\leq n$, and $0\leq f \in L^p(\Omega)$ for some $p > n$. Then Badiane-Zeriahi consider the problem of finding a number $\lambda_1 > 0$ and a function $u_1\in \mSH(\Omega)\cap C^2(\Omega)$ solving:
\begin{equation}\label{CHE eigenvalue}
\begin{cases}
(\ddbar u_1)^m\wedge \omega^{n-m} = (-\lambda_1 u_1)^m f^m\omega^n & \text{ in }\Omega,\\
u_1 = 0 & \text{ on }\p\Omega,\\
\inf_\Omega u_1 = -1.
\end{cases}
\end{equation}
The number $\lambda_1 := \lambda_1(\Omega ,f)$ is the first (twisted) eigenvalue of the complex $m$-Hessian operator $\Hm(u):= (\ddbar u)^m\wedge\omega^{n-m}/\omega^{n}$, and $u_1$ is the corresponding (normalized) eigenfunction. As is well-known, when $m = n$, $\Hm = \mathrm{MA}$, the complex Monge-Amp\`ere operator, and when $m = 1$, $\Hm = \tfrac{1}{n}\Delta_\omega$, the $\omega$-Laplacian operator.

When $m = n$ and $f > 0$ is smooth up to the boundary, Badiane-Zeriahi \cite[Theorem 1.1]{BZ23a} show the existence of a unique pair $(\lambda_1, u_1)$ solving \eqref{CHE eigenvalue}; they moreover show that $u_1$ is smooth on the interior of $\Omega$ and $C^{1,\alpha}$ up to the boundary, for all $0 < \alpha < 1$. This corresponds to the $C^{1,1}$ boundary regularity shown in Lions for the eigenfunction of the real Monge-Amp\`ere operator (both follow from {\it a priori} Laplacian bounds). Badiane-Zeriahi also prove a Rayleigh quotient formula for $\lambda_1$, in terms of the Monge-Amp\`ere energy, similar to results of Tso \cite{Tso90} and Wang \cite{Wang94} for the real Monge-Amp\`ere and $m$-Hessian equations (although the techniques in \cite{BZ23a, BZ23b} are quite different from those in \cite{Tso90, Wang94}).

When $\frac{n-1}{2} < m \leq n$, Badiane-Zeriahi again show the Rayleigh quotient formula for $\lambda_1$ \cite[Theorem 1.1]{BZ23b} (which implies that $\lambda_1$ is unique in this case), and also the existence of a H\"older continuous eigenfunction $u_1$. Their techniques are unable to establish higher regularity or uniqueness in this case (this can be partially explained by noting that their methods apply to much more general right-hand sides, where higher regularity should not hold -- see \cite[Theorem 1.2]{BZ23b}).

Our goal in this paper is to expand upon \cite{BZ23a, BZ23b}, and answer some special cases of questions they pose. Our main result is the following:

\begin{theorem}\label{Theorem 1}

Suppose that $\Omega$ is an strongly $m$-pseudoconvex manifold $($see Section \ref{Background} for a definition$)$, $\omega$ is a K\"ahler metric on $\Omega$, and $0 < f \in C^\infty(\ov{\Omega})$. Then there exists some $\lambda_1 := \lambda_1(\Omega, f) > 0$ and $u_1\in \mSH(\Omega)\cap C^\infty(\Omega)\cap C^{1,1}(\ov{\Omega})$ such that $(\lambda_1, u_1)$ is the unique solution to \eqref{CHE eigenvalue}.

\end{theorem}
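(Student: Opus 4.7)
The plan is to adapt the continuity-method scheme originated by Lions for the real Monge-Amp\`ere eigenvalue problem, and used in the complex setting by Badiane-Zeriahi. For $t \geq 0$, consider the perturbed Dirichlet problems
\begin{equation*}
(\ddbar u_t)^m\wedge\omega^{n-m} = (1 - tu_t)^m f^m \omega^n \text{ in }\Omega,\qquad u_t|_{\p\Omega}=0,
\end{equation*}
seeking solutions $u_t \in \mSH(\Omega)\cap C^\infty(\Omega)\cap C^{1,1}(\ov\Omega)$. At $t=0$ this reduces to the standard Dirichlet problem for the complex $m$-Hessian equation, whose solvability in this regularity class on strongly $m$-pseudoconvex manifolds is by now classical. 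Openness in $t$ follows from the implicit function theorem, since the linearization is an elliptic operator with strictly positive $0$-th order term and zero Dirichlet data. Let $[0,\lambda_1)$ denote the maximal interval of solvability; comparison with a fixed subsolution constructed from a strictly $m$-subharmonic defining function of $\Omega$ yields $\lambda_1 < \infty$ together with $\|u_t\|_\infty \to \infty$ as $t\uparrow\lambda_1$.

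Setting $\tilde u_t := u_t/\|u_t\|_\infty$, we have $-1\le \tilde u_t\le 0$ and
\begin{equation*}
(\ddbar \tilde u_t)^m\wedge\omega^{n-m} = \bigl(\|u_t\|_\infty^{-1} - t\tilde u_t\bigr)^m f^m \omega^n.
\end{equation*}
The main obstacle is establishing uniform $C^{1,1}(\ov\Omega)$ bounds on $\tilde u_t$ as $t\uparrow\lambda_1$. The interior $C^{1,1}$ estimate is obtained by standard maximum-principle calculations for Hessian-type operators on K\"ahler manifolds. The boundary $C^{1,1}$ estimate is considerably more delicate: the pure tangential and mixed tangential-normal second derivatives on $\p\Omega$ are to be controlled via local barrier constructions that crucially exploit the strong $m$-pseudoconvexity of $\p\Omega$, while the normal-normal component is then recovered from the equation itself, provided one simultaneously establishes a uniform lower bound on the smaller eigenvalues of $\ddbar \tilde u_t$ near the boundary.

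With $C^{1,1}(\ov\Omega)$ bounds in hand, a subsequential limit along $t\uparrow\lambda_1$ (extracted in $C^{1,\alpha}(\ov\Omega)\cap C^\infty_{\mathrm{loc}}(\Omega)$ via Evans-Krylov and Schauder bootstrapping) yields an eigenpair $(\lambda_1, u_1)$ solving \eqref{CHE eigenvalue}; the normalization $\inf_\Omega u_1 = -1$ passes to the limit. For uniqueness of $\lambda_1$, a sliding/comparison argument shows that an eigenfunction for any $\lambda'\ne \lambda_1$ would either produce a subsolution for the perturbed problem at some $t$ beyond $\lambda_1$, or else contradict the blow-up of $\|u_t\|_\infty$. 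Uniqueness of the normalized eigenfunction itself then follows from the concavity of $\Hm^{1/m}$: if $u_1,u_2$ are two such solutions, the average $w=(u_1+u_2)/2$ satisfies $\Hm^{1/m}(w)\ge -\lambda_1 w f$ pointwise, and the equality case of the corresponding Garding-type inequality, combined with the strong maximum principle, forces $u_1\equiv u_2$.
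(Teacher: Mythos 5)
Your overall skeleton (the continuity path $(\ddbar u_t)^m\wedge\omega^{n-m}=(1-tu_t)^mf^m\omega^n$, blow-up of $\|u_t\|_\infty$, normalization, uniform $C^{1,1}$ bounds, passage to the limit) matches the paper's, but several of the steps you assert would not go through as written. First, openness: the linearization at $u_t$ is $F^{i\ov{j}}\de_i\de_{\ov{j}}+tf$, and a \emph{positive} zeroth-order term is an obstruction to invertibility, not a help -- you need $tf$ to lie strictly below the first eigenvalue of the linearized second-order part, which is exactly the delicate point near $t=\lambda_1$. The paper sidesteps this entirely by producing solutions through a monotone iteration with a subsolution and a supersolution (Theorem \ref{existence under sub-super}), since $(1-\lambda s)f$ is decreasing in $s$. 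Likewise, a subsolution cannot give you $\lambda_1<\infty$; subsolutions yield existence, not obstructions. The paper gets finiteness by Maclaurin's inequality and comparison with the first Dirichlet eigenvalue of $\Delta_\omega$ (Theorem \ref{first eigenvalue Laplacian}).

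The two most serious gaps are in the $C^{1,1}$ estimate and in uniqueness. For the boundary estimate, the right-hand side $(\|u_t\|_\infty^{-1}-t\tilde u_t)^mf^m$ degenerates to zero on $\p\Omega$, so all constants must be independent of $\inf\psi$; the normal-normal derivative is recovered only after one produces a \emph{supersolution} $w$ with $-w_{x_n}\geq C^{-1}$ on $\p\Omega$, which together with $m$-pseudoconvexity bounds $\sigma_{m-1}$ of the tangential block from below. Constructing that supersolution uniformly in $t$ is the crux: the paper first upgrades $L^1_{\mathrm{loc}}$ convergence of the normalized solutions to $C^0(\ov\Omega)$ convergence via the Dinew--Ko\l odziej stability estimate (since B\l ocki's gradient estimate is unavailable for $m<n$), locates an interior ball where the limit is $\leq -2/3$, and solves an auxiliary Laplace equation to build $w$. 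Your phrase ``provided one simultaneously establishes a uniform lower bound on the smaller eigenvalues of $\ddbar\tilde u_t$ near the boundary'' names the needed input without supplying it, and this is precisely where the new idea lives. Finally, the uniqueness argument via $w=(u_1+u_2)/2$ fails: $\sigma_m^{1/m}$ is $1$-homogeneous, so its concavity is never strict along proportional Hessians, and the resulting inequality $\Hm^{1/m}(w)\geq-\lambda_1wf$ only says $w$ is a subsolution -- of which there are many (every positive multiple of $u_1$ solves the equation), so no contradiction arises. The paper instead fixes the eigenvalue by an integration-by-parts argument using the Dinew--Lu mixed Hessian inequality, and fixes the eigenfunction by maximizing $\theta$ with $u_2\leq\theta u_1$ and running a modified Hopf lemma, where the $C^{1,1}(\ov\Omega)$ bound is used to show $F^{i\ov{j}}u_iu_{\ov{j}}\geq c_0(-\lambda uf)|\de u|^2$ and thereby restore enough ellipticity near the degenerate boundary.
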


Theorem \ref{Theorem 1} answers a question posed in \cite[page 3, line 7]{BZ23a} when $m = n$, and answers important special cases of \cite[Question 1 and 2]{BZ23b} when the right-hand side function is smooth. Note that we do not require any restriction on the value of $m$. Even further, we show that uniqueness holds for weak solutions which, {\it a priori}, are only assumed to be in the Cegrell class $\mathcal{E}^1_m(\Omega)$; see Theorem \ref{uniqueness in E1}. Our proof of uniqueness crucially uses the $C^{1,1}(\ov{\Omega})$ regularity of $u_1$.

Moreover, our results apply to abstract strongly $m$-pseudoconvex manifolds $\Omega$ (see Section \ref{Background} for the precise definition we use). In general, there may be no embedding of $\Omega$ into $\C^n$ (e.g. if $\Omega$ contains a proper closed subvariety of (positive) dimension $\leq n-m$). As such, we also obtain a generalization of Badiane-Zeriahi's Rayleigh quotient formula for $\lambda_1$:

\begin{theorem}\label{Theorem 2}
Suppose we are in the setting of Theorem \ref{Theorem 1}. Then the first eigenvalue of $\Hm$ satisfies:
\[
\lambda_{1}(\Omega ,f)^{m} = \inf\left\{\frac{E_{m}(u)}{I_{m}(u)}~\middle|~u\in\mathcal{E}_{m}^{1}(\Omega),\ u\neq0\right\},
\]
where we define:
\[
E_{m}(u) = \frac{1}{m+1}\int_{\Omega}(-u)(\ddbar u)^{m}\wedge\omega^{n-m},
\]
and
\[
I_{m}(u) = \frac{1}{m+1}\int_{\Omega}(-u)^{m+1} f^m\omega^{n}.
\]

\end{theorem}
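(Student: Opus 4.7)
The plan is to establish the two inequalities that together yield the variational formula. For the easy direction, namely that the infimum does not exceed $\lambda_1^m$, I would simply substitute $u = u_1$: applying the eigenfunction equation to the definition of $E_m$ gives
\[
E_m(u_1) = \frac{1}{m+1}\int_{\Omega}(-u_1)(-\lambda_1 u_1)^m f^m \omega^n = \lambda_1^m\, I_m(u_1),
\]
so $u_1$ realizes the ratio $\lambda_1^m$, and the infimum is at most $\lambda_1^m$.

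For the reverse inequality, which is the main content of the theorem, the plan is to combine the direct method of the calculus of variations with the uniqueness statement for weak eigenpairs in the Cegrell class. Set $\Lambda := \inf\{E_m(u)/I_m(u) : u \in \mathcal{E}_m^1(\Omega),\ u \neq 0\}$. Since $E_m$ and $I_m$ are each homogeneous of degree $m+1$, the ratio is scale invariant, so it suffices to minimize $E_m$ on the slice $\{u \in \mathcal{E}_m^1(\Omega) : I_m(u) = 1\}$. For a minimizing sequence $u_k$ the constraint supplies a uniform $L^{m+1}(f^m \omega^n)$ bound, and $E_m(u_k)\to\Lambda$ is bounded, so by compactness in $\mathcal{E}_m^1$ one extracts a subsequence converging (in $\loc$, and weakly in the appropriate sense) to some $u_* \in \mathcal{E}_m^1(\Omega)$. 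Using lower semicontinuity of $E_m$ together with continuity of $I_m$ along this convergence yields $E_m(u_*) = \Lambda$ and $I_m(u_*) = 1$.

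The next step is to compute the first variation at the minimizer and identify $u_*$ as a weak eigenfunction. Using the pluripotential integration-by-parts formula in $\mathcal{E}_m^1$, a smooth test variation $\phi$ yields
\[
\frac{d}{d\varepsilon}\bigg|_{\varepsilon=0} E_m(u_* + \varepsilon \phi) = -\int_\Omega \phi\,(\ddbar u_*)^m \wedge \omega^{n-m},\qquad \frac{d}{d\varepsilon}\bigg|_{\varepsilon=0} I_m(u_* + \varepsilon \phi) = -\int_\Omega \phi\,(-u_*)^m f^m \omega^n,
\]
so the Lagrange multiplier condition at the minimum forces the weak eigenvalue equation
\[
(\ddbar u_*)^m \wedge \omega^{n-m} = \Lambda\,(-u_*)^m f^m \omega^n
\]
in the pluripotential sense; in particular $u_*$ is a weak eigenfunction with eigenvalue $\Lambda^{1/m}$. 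Invoking the uniqueness of weak eigenpairs in $\mathcal{E}_m^1(\Omega)$ provided by Theorem \ref{uniqueness in E1}, one concludes $\Lambda^{1/m} = \lambda_1$, i.e., $\Lambda = \lambda_1^m$ as desired.

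The main obstacle is the compactness step: verifying that a minimizing sequence converges (along a subsequence) to an element of $\mathcal{E}_m^1$ with the constraint $I_m = 1$ preserved and energy equal to $\Lambda$. This requires the relevant lower semicontinuity of $E_m$, Bedford--Taylor-type convergence of the Hessian measures $(\ddbar u_k)^m \wedge \omega^{n-m}$, and continuity of $I_m$, all transferred to the abstract strongly $m$-pseudoconvex manifold setting where the lack of a global embedding into $\C^n$ prevents direct appeal to the Euclidean results of \cite{BZ23a, BZ23b}. Once these are secured, the computation of the Euler-Lagrange equation follows standard Cegrell-theoretic manipulations, and Theorem \ref{uniqueness in E1} closes the argument.
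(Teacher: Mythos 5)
Your proposal follows essentially the same route as the paper: minimize $E_m$ on the slice $\{I_m = 1\}$ by the direct method, identify the minimizer as a weak eigenpair via the Euler--Lagrange equation, and conclude $\Lambda = \lambda_1^m$ from the uniqueness theorem for weak solutions in $\mathcal{E}^1_m(\Omega)$ (Theorem \ref{uniqueness in E1}). The one step where your computation as written does not go through is the first variation: for a general smooth test function $\phi$ the perturbation $u_* + \varepsilon\phi$ need not be $m$-subharmonic, so $E_m(u_*+\varepsilon\phi)$ is not defined and the derivative formula you quote for $E_m$ is not available. The paper's fix is to differentiate $E_m\circ P_m$ instead, where $P_m(h) = \sup\{w\in\mSH(\Omega)\mid w\leq h\}$ is the $m$-sh envelope; the projection formula (Proposition \ref{projection formula}) gives $\frac{d}{dt}\big|_{t=0}E_m(P_m(u+tv)) = \int_\Omega(-v)\Hm(u)$, and this (via the orthogonality relation $\int_\Omega(P_m(\psi)-\psi)\Hm(P_m(\psi))=0$) is what legitimately produces the weak eigenvalue equation at the minimizer. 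With that substitution your argument coincides with the paper's Steps 2--4; note also that the easy inequality $\Lambda\leq\lambda_1^m$ you prove separately is in fact absorbed into the final appeal to Theorem \ref{uniqueness in E1}, which directly forces $\Lambda^{1/m}=\lambda_1$.
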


From our main results, we deduce several applications. The first is a sufficient condition to solve certain Hessian equations whose right-hand side depends on the unknown function, without having to assume the existence of a subsolution:

\begin{theorem}\label{Theorem 3}
Let $\psi(z,s)$ be a smooth (strictly) positive function on $\ov{\Omega}\times(-\infty,0]$ such that $\p_s \psi \geq -\gamma_0 > -\lambda_1$, where $\lambda_1 = \lambda_1(\Omega, 1)$ is the first eigenvalue of $\Hm$ associated to $\omega^n$.  Then the equation:
\begin{equation}
\begin{cases}
\sigma_m(u) = \psi^m(z, u) &\text{ in }\Omega,\\
u = 0 &\text{ on }\p\Omega
\end{cases}
\end{equation}
admits a unique solution $u\in \mSH(\Omega)\cap C^\infty(\ov{\Omega})$.
\end{theorem}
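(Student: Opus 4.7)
The strategy is to prove existence by a continuity method and uniqueness by a linearization argument, both exploiting the strict gap $\gamma_0<\lambda_1$ together with the Rayleigh characterization of $\lambda_1$ from Theorem \ref{Theorem 2}. Fix $c>0$, set $\psi_t(z,s):=t\psi(z,s)+(1-t)c$ for $t\in[0,1]$, and consider the family
\[
\sigma_m(u_t)=\psi_t(z,u_t)^m\ \text{in}\ \Omega,\qquad u_t|_{\p\Omega}=0.
\]
At $t=0$ this is the constant right-hand side Dirichlet problem, which is smoothly solvable on a strongly $m$-pseudoconvex manifold by classical theory, and monotonicity is preserved along the path since $\p_s\psi_t=t\,\p_s\psi\geq-t\gamma_0>-\lambda_1$. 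The plan is to show the set of $t\in[0,1]$ admitting a smooth solution is both open and closed.

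\textbf{Openness and uniqueness.} At a solution $u_t$ the linearized operator is
\[
L_t\phi=F^{ij}(u_t)\phi_{i\bar j}-m\psi_t^{m-1}\p_s\psi_t\cdot\phi,\qquad F^{ij}:=\frac{\p\sigma_m}{\p u_{i\bar j}}>0.
\]
The identity $F^{ij}(-u_t)_{i\bar j}=-m\sigma_m(u_t)=-m\psi_t^m<0$ exhibits $-u_t>0$ as a positive superharmonic function for $F^{ij}\p_i\p_{\bar j}$ vanishing on $\p\Omega$, so the principal Dirichlet eigenvalue of $-F^{ij}\p_i\p_{\bar j}$ is strictly positive by standard principal-eigenvalue theory. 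Combined with the strict gap $-\p_s\psi_t\leq\gamma_0<\lambda_1$ and a comparison (through the Rayleigh characterization in Theorem \ref{Theorem 2}) between this linear principal eigenvalue and $\lambda_1$, one shows $L_t$ has trivial kernel in $C^{2,\alpha}_0(\ov\Omega)$; hence $L_t$ is an isomorphism by Schauder theory and the Fredholm alternative, and the implicit function theorem gives openness. The same argument applied to the difference $\phi:=u-v$ of two hypothetical solutions, which satisfies a linear Dirichlet problem $A^{ij}\phi_{i\bar j}-b(z)\phi=0$ with $A^{ij}:=\int_0^1 F^{ij}(su+(1-s)v)\,ds>0$ (a convex combination of $m$-SH functions is $m$-SH) and $b(z):=\int_0^1 m\psi^{m-1}\p_s\psi\,ds\geq-m\gamma_0\cdot\max\psi^{m-1}$, forces $\phi\equiv 0$ and yields uniqueness.

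\textbf{Closedness and main obstacle.} Closedness reduces to a uniform $C^0$ bound on $u_t$, since standard interior and boundary estimates for the $m$-Hessian equation then provide uniform $C^{k,\alpha}$ bounds to all orders. Testing the Rayleigh quotient from Theorem \ref{Theorem 2} against $u_t$ and substituting $\sigma_m(u_t)=\psi_t(z,u_t)^m$ gives
\[
\lambda_1^m\int_\Omega(-u_t)^{m+1}\omega^n\leq\int_\Omega(-u_t)\psi_t(z,u_t)^m\omega^n.
\]
The monotonicity bound $\p_s\psi_t\geq-\gamma_0$ forces $\psi_t(z,s)\leq C_0+\gamma_0(-s)$ for $s\leq 0$, so for any $\e>0$ there is $C_\e$ with $\psi_t(z,u_t)^m\leq(1+\e)^m\gamma_0^m(-u_t)^m+C_\e$. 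Inserting this and choosing $\e$ so small that $(1+\e)^m\gamma_0^m<\lambda_1^m$ absorbs the dominant term, and Hölder's inequality then yields $\|u_t\|_{L^{m+1}(\Omega)}\leq C$. The main technical obstacle is raising this $L^{m+1}$ bound to the required $L^\infty$ bound when $m<n$: the Kołodziej-type $L^p\to L^\infty$ estimate for $\Hm$ requires $\sigma_m(u_t)\in L^p$ with $p>n/m$, while the Rayleigh quotient produces directly only the borderline exponent $p=(m+1)/m$, which suffices only when $m=n$. I expect to bridge this gap via a Moser-style iteration exploiting the growth bound $\sigma_m(u_t)\leq (C_0+\gamma_0(-u_t))^m$, successively raising the integrability of $u_t$ until a direct Kołodziej-type estimate applies.
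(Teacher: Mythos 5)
Your continuity method and overall architecture are reasonable, and your closedness argument via the Rayleigh quotient of Theorem \ref{Theorem 2} is a genuinely different (and workable) route to the $C^0$ bound than the paper's. But the openness step contains the real gap, and it propagates to your uniqueness argument. Knowing that $-u_t>0$ is a supersolution of $F^{i\ov j}\p_i\p_{\ov j}$ only tells you the principal Dirichlet eigenvalue of the second-order part is positive; to kill the kernel of $L_t=F^{i\ov j}\p_i\p_{\ov j}-m\psi_t^{m-1}\p_s\psi_t$ you must show that this principal eigenvalue (with the weight $m\psi_t^{m-1}$) strictly exceeds $\gamma_0$, and your proposed bridge ``through the Rayleigh characterization in Theorem \ref{Theorem 2}'' does not exist: Theorem \ref{Theorem 2} characterizes the \emph{nonlinear} eigenvalue $\lambda_1$, not the principal eigenvalue of the linearization at $u_t$. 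If you try to quantify your own supersolution argument, the sup-characterization of the linear eigenvalue with test function $u_t$ gives only $\mu_1\geq\inf_\Omega \psi_t(z,u_t)/(-u_t)$, and the monotonicity hypothesis yields $\psi_t(z,s)\geq\psi_t(z,0)-\gamma_0(-s)$, which is a bound in the \emph{wrong} direction — it does not give $\psi_t(z,u_t)\geq\gamma_0(-u_t)$. The paper's key idea, which you are missing, is to first build a subsolution $\ul u=Cu_\gamma$ from the eigenvalue continuity path at a level $\gamma\in(\gamma_0,\lambda_1)$ (so that $\sigma_m^{1/m}(\ul u)\geq -\gamma\ul u$), and then use concavity of $\sigma_m^{1/m}$ to transfer this to the linearization at $u_t$: $F^{i\ov j}(u_t)\ul u_{i\ov j}\geq\sigma_m^{1/m}(\ul u)\geq-\gamma\ul u$, whence the sup-characterization (Theorem \ref{first eigenvalue Laplacian}(2)) gives $\mu_1(t)\geq\gamma>\gamma_0$ and Lemma \ref{max prin c} gives injectivity. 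The identical issue sinks your uniqueness argument as written: $A^{i\ov j}\phi_{i\ov j}=b\phi$ with $\phi|_{\p\Omega}=0$ and $b$ merely bounded below by a negative constant does not force $\phi\equiv0$; you again need $b>-\mu_1(A)$, which requires the same subsolution-plus-concavity input.

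On the obstacle you flag in closedness: it is real but closes much more easily than you fear. From $\|u_t\|_{L^{m+1}}\leq C$ and $\sigma_m(u_t)\leq(C_0+\gamma_0(-u_t))^m$, apply the Ko{\l}odziej bound with some $p>n/m$ to get $\|u_t\|_{L^\infty}\leq C(1+\|u_t\|_{L^{mp}})$, then interpolate $\|u_t\|_{L^{mp}}\leq\|u_t\|_{L^\infty}^{1-(m+1)/(mp)}\|u_t\|_{L^{m+1}}^{(m+1)/(mp)}$ and absorb by Young's inequality; no Moser iteration is needed. (The paper avoids all of this: once the subsolution $\ul u$ exists, $u_t\geq\ul u$ follows from Lemma \ref{max prin c}, giving the $C^0$ bound for free.) Finally, ``standard estimates give $C^{k,\alpha}$ to all orders'' undersells the $C^2$ step — you still need the boundary Hessian estimates of Theorem \ref{a priori estimates}, which require both a subsolution and a supersolution-type barrier, plus the blow-up argument for the gradient — but these can be manufactured once the $C^0$ bound and the strict positivity $\psi\geq\tau>0$ on the relevant compact range are in hand.
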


When $\p_s\psi \geq 0$, this is a well-known result of Caffarelli-Kohn-Nirenberg-Spruck \cite{CKNS85}; Theorem \ref{Theorem 3} weakens this to an essentially optimal condition. Again, the importance of results such as Theorem \ref{Theorem 3} comes from the fact that they do not require the existence of a subsolution -- while the subsolution theory has proven to be highly successful in many applications (particularly to complex geometry), it is often quite difficult to verify the existence of a subsolution directly. As such, it is beneficial to avoid this assumption when possible.

In order to apply Theorem \ref{Theorem 3} effectively however, one needs control over $\lambda_1$. To this end, we present some geometric estimates for $\lambda_1$, including a lower bound for $\Omega\subset \C^n$ (Theorem \ref{lower bound of lambda_1}) and a monotonicity statement (Theorem \ref{monotonicity}) that generalizes \cite[Theorem 5.6]{BZ23a}. Our results are inspired by work of Le \cite{Le18}, for the real Monge-Amp\`ere eigenvalue.

The comparison of the first eigenvalue of Laplacian, as established by Cheng \cite{Ch75}, is the classical result on Riemannian manifolds, which provides an upper bound of the first eigenvalue of Laplacian.
The key element in its proof is the Laplacian comparison of the distance function under certain curvature condition. Using the Laplacian comparison together with Rayleigh quotient formula (Theorem \ref{Theorem 2}), we can derive an upper bound on manifolds with non-negative Ricci curvature:
\begin{theorem}\label{upper bound manifold}
Suppose that the Ricci curvature $\Ric(\omega)$ is non-negative. Let $R > 0$ be the largest number such that there exists a geodesic ball $B_R(p)\subseteq \Omega$ with $\dist_\omega^{2}(z, p)$ smooth and plurisubharmonic on $B_R(p)$. Then:
\[
\lambda_1(\Omega, f)  \leq c(n,m) (\inf_\Omega f)^{-\frac{2m+1}{m+1}}  R^{-\frac{2n}{m+1} - 2} \diam(\Omega)^{\frac{2n}{m+1}}\vol(\Omega)^{-\frac{1}{m+1}}
\|f\|_{L^{m}(\Omega)}^{\frac{m}{m+1}}.
\]
\end{theorem}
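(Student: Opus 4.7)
The plan is to apply the variational formula of Theorem \ref{Theorem 2} to a test function built from the squared distance function, estimating the resulting integrals using the Ricci-nonnegativity hypothesis through both a Laplacian comparison and a Bishop--Gromov volume comparison. First, by the domain monotonicity of the eigenvalue (Theorem \ref{monotonicity}), it suffices to bound $\lambda_{1}(B_{R}(p), f|_{B_R(p)})$. The natural test function on $B_R(p)$ is $u := \dist_\omega^2(\cdot, p) - R^2$, which by hypothesis is smooth, plurisubharmonic (hence $m$-subharmonic), non-positive, and vanishes on $\p B_R(p)$. Under $\Ric(\omega) \geq 0$, the Laplacian comparison theorem gives $\mathrm{tr}_\omega \ddbar(\dist_\omega^2(\cdot, p)) \leq C(n)$ on $B_R(p)$, and the Newton--Maclaurin inequality for elementary symmetric polynomials on the positive cone upgrades this trace bound to the pointwise estimate $\Hm(u) \leq c(n,m)$.

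The key manipulation is a weighted H\"older inequality. Splitting $(-u)\Hm(u) = [(-u)f^{m/(m+1)}]\cdot[\Hm(u) f^{-m/(m+1)}]$ and applying H\"older with exponents $m+1$ and $\tfrac{m+1}{m}$ gives, after plugging into $\lambda_1^m \leq E_m(u)/I_m(u)$ and rearranging,
\[
\lambda_1^{m+1}(m+1)I_m(u) \leq \int_{B_R(p)} \Hm(u)^{(m+1)/m} f^{-1}\,\omega^n.
\]
The right-hand side is bounded above using $\Hm(u) \leq c(n,m)$ together with the elementary estimate
\[
\int_{B_R(p)} f^{-1}\,\omega^n \leq (\inf_\Omega f)^{-1}\vol(\Omega) \leq (\inf_\Omega f)^{-(m+1)}\|f\|_{L^m(\Omega)}^{m},
\]
where the last step follows from $\vol(\Omega) \leq (\inf_\Omega f)^{-m}\|f\|_{L^m(\Omega)}^m$. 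For the lower bound on $I_m(u)$, using $-u \geq \tfrac{3}{4}R^2$ on $B_{R/2}(p)$ and Bishop--Gromov applied to the inclusion $B_{R/2}(p) \subset B_{\diam(\Omega)}(p) \supseteq \Omega$ yields $\vol(B_{R/2}(p)) \geq c(n) R^{2n}\diam(\Omega)^{-2n}\vol(\Omega)$ and hence $I_m(u) \geq c(n,m) R^{2(m+1)+2n}\diam(\Omega)^{-2n}(\inf_\Omega f)^m \vol(\Omega)$.

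Combining the two bounds and extracting the $(m+1)$-th root then yields the claimed inequality. The main difficulty is orchestrating the H\"older split so that in the end $\inf_\Omega f$, $\|f\|_{L^m(\Omega)}$, $\diam(\Omega)$, and $\vol(\Omega)$ all appear with the precise exponents $-\tfrac{2m+1}{m+1}$, $\tfrac{m}{m+1}$, $\tfrac{2n}{m+1}$, and $-\tfrac{1}{m+1}$ indicated in the statement; the remaining ingredients (Newton--Maclaurin on the positive cone, Laplacian comparison, and Bishop--Gromov comparison between $B_{R/2}(p)$ and $B_{\diam(\Omega)}(p)$) are essentially routine consequences of the Ricci non-negativity and the regularity of $\dist_\omega^2(\cdot, p)$ on $B_R(p)$ guaranteed by the hypothesis.
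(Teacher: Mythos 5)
Your overall strategy is sound and genuinely different from the paper's, and the exponent bookkeeping checks out. The paper never passes to the ball: it takes as test function the solution $v$ of the auxiliary Dirichlet problem $\sigma_m(v) = (4n)^m f^m\omega^n$ on $\Omega$ (via Corollary \ref{CP Dirichlet corollary}), uses $w = (\inf_\Omega f)(\dist_\omega^2(\cdot,p) - R^2)$ only as a local barrier on $B_R(p)$ --- Maclaurin, the Laplacian comparison and the comparison principle give $v \leq w$ there, which yields the lower bound on $\int(-v)^{m+1}f^m\omega^n$ --- and computes the numerator $\int_\Omega(-v)\Hm(v) = c\int_\Omega(-v)f^m\omega^n$ exactly from the equation before applying H\"older. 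Your version replaces the auxiliary PDE by the pointwise bound $\Hm(u)\le c(n,m)\,\omega^n$ for $u=\dist_\omega^2(\cdot,p)-R^2$ together with a weighted H\"older split of $(-u)\Hm(u)$; this is arguably cleaner in that it needs no existence theorem, and both routes produce identical exponents.

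The one step you must justify is the reduction to $B_R(p)$. Theorem \ref{monotonicity} and Theorem \ref{Theorem 2} are stated for strongly $m$-pseudoconvex manifolds, and the definition requires a defining function $\rho$ with $\ddbar\rho-\e\omega$ $m$-positive; the hypothesis only gives that $\dist_\omega^2(\cdot,p)$ is plurisubharmonic, so $\dist_\omega^2-R^2$ need not be a \emph{strict} defining function and $B_R(p)$ need not be admissible for either theorem (the same objection blocks the alternative of pushing $u$ forward to $\Omega$ via Theorem \ref{Subextension Theorem}, which also assumes the subdomain is strongly $m$-pseudoconvex). This is patchable --- for instance, work on $\{\dist_\omega^2 - R'^2 + \delta\rho_\Omega < 0\}\subset B_R(p)$ with $R'<R$ and $\delta$ small, which is strongly $m$-pseudoconvex, and then let $\delta\to0$ and $R'\to R$ --- but as written it is a gap, and it is exactly the point that the paper's barrier argument is designed to sidestep.
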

\noindent In particular, when $\Omega\subset\C^n$, the number $R$ is the in-radius.

\subsection{Remarks on the Proofs}

We now briefly discuss some of the main points in our proofs. As already mentioned, our paper builds directly on the general outlines in Badiane-Zeriahi \cite{BZ23a, BZ23b}, which in turn build upon work of Lions \cite{Lions85} and Wang \cite{Wang94} for the real Monge-Amp\`ere operator. We use the idea of \cite{CTW19} to control the largest eigenvalue of the real Hessian for solutions, which is what allows us to show that $u_1$ is $C^{1,1}(\ov{\Omega})$; in fact, we prove more general $C^2$-estimates up to the boundary, under the assumption of a subsolution and supersolution (Theorems \ref{existence under sub-super} and \ref{existence under sub-super degenerate}). Our boundary Hessian estimates differ from the previous work of Collins-Picard \cite{CP22} in that we require our constants to be independent of a lower bound for $\psi$, which we accomplish by utilizing the supersolution and $m$-pseudoconvexity of $\Omega$.

Needing to produce a supersolution creates additional difficulties when we attempt to apply our Hessian estimates to the eigenvalue problem; in \cite[Theorem 1.1]{BZ23a}, a crucial step in overcoming these difficulties is to use an explicit {\it a priori} gradient bound for solutions to the complex Monge-Amp\`ere equation, due to B\l ocki \cite{Bl09}. The corresponding estimate for the complex $m$-Hessian equation is a long standing open problem. We overcome this difficulty by using instead the stability estimate of Dinew-Ko\l odziej \cite{DK14}.

Finally, we are able to improve upon the methods in \cite{BZ23b} by employing both a robust integration by parts argument due to Le \cite{Le18}, and a modification of the classical Hopf lemma in our proof of uniqueness (Theorem \ref{uniqueness in E1}). The integration by parts argument only needs Dinew-Lu's mixed Hessian inequality \cite{DL15}, which allows us to avoid having to show any regularity of the minimizer of the Rayleigh quotient. Our Hopf lemma argument uses the $C^{1,1}(\ov{\Omega})$ regularity of $u_1$ to show some extra positivity for the linearization of $\sigma_m^{1/m}(u_1)$ near the boundary of $\Omega$, where the operator is not uniform elliptic.

\subsection{Previous Works}\label{Previous Works}

Since Lions \cite{Lions85}, many authors have studied eigenfunctions for the real Hessian equations; general existence and uniqueness of the eigenfunction for smooth domains was established by Wang \cite{Wang94}. For the real Monge-Amp\`ere equation, recently Savin \cite{Savin14} has established $C^2$ regularity of the eigenfunction up to the boundary. Subsequently, Le-Savin \cite{LS17} were able to extend this to smoothness up to the boundary. The results in \cite{Savin14} are based strongly on convex geometric techniques, which cannot be applied to the complex setting. For general bounded convex domains, Le \cite{Le18} has recently established existence and uniqueness of the eigenfunction and eigenvalue for the real Monge-Amp\`ere operator, along with several interesting applications.

The variational characterization of the eigenvalue presented here is originally due to Tso \cite{Tso90}, who proved the equivalent characterization for the real Hessian equations. He also studied a family of eigenvalue problems by varying the power of $-u$ on the right-hand side. Very recently, generalizations of this family have been studied by Tong-Yau \cite{TY23} and forthcoming work of Collins-Tong-Yau, with applications to the existence of complete Calabi-Yau metrics on complements of divisor.

In the complex setting, comparably less work has been. Aside from the works of Badiane-Zeriahi \cite{BZ23a, BZ23b}, which we've already discussed at length, there are a pair of papers by Koutev-Ramadanov \cite{KR89, KR90}, which discuss some applications of the Monge-Amp\`ere eigenfunction, similar in spirit to Theorem \ref{Theorem 3}. There is also some work done in the case when $\Omega$ is the unit ball in $\C^n$ \cite{KR89, Li16, Wei16, LD21} (unlike the real case, there is apparently no general theory which shows that the eigenfunction is radially symmetric in this setting).

Finally, we mention a number of works on Sobolov-type inequalities for $m$-subharmonic and $m$-convex functions, which have been relevant to some of the previous works listed above. The basic version we need, which is recalled in Section \ref{Background} below, is due to B\l ocki \cite{Bl93}; the real analogues were shown in \cite{Wang94, Tso90}. More powerful Sobolev inequalities for $m$-sh function have recently been shown by Zhou \cite{Zhou13}, \AA hag-Czy\.{z} \cite{AC20}, and Wang-Zhou \cite{WZ22}.

\subsection{Further Directions}

Our work presents many interesting follow-up questions. First, it is a natural question to ask about higher order boundary regularity of the eigenfunction. This is likely very difficult; the proof of boundary regularity for the real Monge-Amp\`ere eigenfunctions \cite{Savin14, LS17} is quite involved, and, as already mentioned, seems to have little to offer the complex setting.

Second, it would be interesting to sharpen several of the results presented. For instance, it seems possible that one might be able to weaken $m$-pseudoconvexity of the boundary. We also believe that it might be possible to improve our geometric bounds for the eigenvalue so that it depends only on the $L^{n}(\Omega)$-norm of $f$.

Another potentially more difficult direction of inquiry would be to investigate if the supersolution condition in Theorem \ref{existence under sub-super} could be removed -- this would follow from an improved normal-normal estimate for the full hessian on the boundary. The difficulty is that the estimate would need to be independent of a lower bound for $\psi$, which is why the method of Collins-Picard \cite{CP22} cannot be applied verbatim.

Finally, it would be interesting to study the Tso families in the complex setting as well \cite{Tso90}; it is known in the real setting that, as the family parameter varies,  the solutions converge to the solution of the limiting parameter. Additionally, the ``sub-critical" case enjoys better variational properties than the ``critical case" (which is the one discussed in this paper). Taken together, this suggests an alternate approach to tackling some of the questions raised at the end of \cite{BZ23b}.

\subsection{Outline}

We conclude this introduction with an outline of the rest of the paper.
In Section \ref{Background} we recall several background results on strongly $m$-pseudoconvex manifolds and the complex $m$-Hessian equation.
In Section \ref{A Priori Estimates} we prove {\it a priori} $C^2$ estimates up to the boundary for certain Dirichlet problems, and then use them to solve the Dirichlet problem for right-hand sides which are decreasing in $u$ (assuming the existence of a subsolution and supersolution) in Section \ref{Existence theorem}; the proof uses an iteration argument. In Section \ref{Existence and Uniqueness of the Eigenfunction}, we prove our main results, Theorems \ref{Theorem 1} and \ref{Theorem 2}. In Section \ref{Applications}, we presents some applications, and prove Theorem \ref{Theorem 3}. Finally, we collect some further results on pluripotential theorem for $m$-subharmonic functions on strongly $m$-pseudoconvex manifolds in the Appendix \ref{Some Pluripotential Theory}.

\medskip

{\bf Acknowledgments:} The authors would like to thank Weijun Zhang for bringing the eigenvalue problem for complex Hessian operators to our attention. We would also like to thank Xingyu Zhu, Laszlo Lempert, Fried Tong, Valentino Tosatti, Bin Zhou, and Jiakun Liu for helpful discussions and suggestions. The first-named author was partially supported by National Key R\&D Program of China 2023YFA1009900, NSFC grant 12271008 and the Fundamental Research Funds for the Central Universities, Peking University.

\section{Background Results on Strongly \texorpdfstring{$m$}{m}-Pseudoconvex Manifolds}
\label{Background}

\subsection{Definitions}

Suppose that $\ov{\Omega}$ is a compact K\"ahler manifold with (non-empty) smooth boundary; we write $\Omega$ for the interior of $\ov{\Omega}$. Fix a K\"ahler metric $\omega$ on $\ov{\Omega}$ (by which we mean that, in local holomorphic coordinates centered at $z\in \p\Omega$, $\omega$ can always be extended as a smooth K\"ahler metric on an open neighborhood of $z$). We write $\mSH(\Omega) := \mSH(\Omega, \omega)$ for the space of $\omega$-$m$-subharmonic functions on $\Omega$; since we regard $\omega$ as fixed, we will often omit it in the notation/terminology.

As in the case of a domain, we write $\mathcal{E}_m^1(\Omega, \omega) \subset \mSH(\Omega)$ (or just $\mathcal{E}_m^1(\Omega)$) for the finite energy class of $m$-sh functions, which will be the largest space we consider in this paper. Recall that functions in $\mathcal{E}^1_m(\Omega)$ have zero  ``boundary values" in a weak sense, which allows for, e.g., integration by parts  -- see Subsection \ref{subsection: energy} and Appendix \ref{Some Pluripotential Theory} for definitions and further details. For $u \in \mathcal{E}^1_m(\Omega)$, the complex $m$-Hessian operator is well-defined. We shall variously write:
\[
\Hm(u) = (\ddbar u)^m\wedge\omega^{n-m}
\]
for this operator, which in general is only a non-negative Radon measure on $\Omega$. When $\Hm(u)$ is absolutely continuous with respect to $\omega^n$, we also define:
\[
\sigma_m(u) :=  \binom{n}{m} \frac{\Hm(u)}{\omega^n}.
\]
This is the case of course when $u \in C^2(\ov{\Omega})$. Then $\sigma_m(u) \in C^0(\ov{\Omega})$, and $\sigma_m(u)$ can moreover be computed by evaluating the $m^\text{th}$ elementary symmetric polynomial on the eigenvalues of $\ddbar u$, with respect to $\omega$.

\medskip

\begin{definition}
We shall say that $\Omega$ is a {\bf strongly $m$-pseudoconvex manifold} if there exists a negative, strictly $m$-sh function $\rho\in \mSH(\Omega)\cap C^\infty(\ov{\Omega})$ such that $\ddbar \rho - \e\omega$ is an $m$-positive form for some $\e > 0$, $\{\rho = 0\} = \p\Omega$, and $d\rho\not= 0$ on $\p\Omega$.
\end{definition}

This definition is a generalization of the \cite[Definition 2.3]{Ngu14}. As usual, we understand $d\rho|_{\p\Omega}$ by looking at local smooth extensions of $\rho$ in coordinate balls.

From a potential theoretic viewpoint, there seems to be only minor differences between strongly $m$-pseudoconvex manifolds and bounded strongly $m$-pseudoconvex domains (which we always mean to be open subsets of $\C^n$). There are however some important geometric differences, unlike in the more familiar plurisubharmonic case -- for instance, strongly $m$-pseudoconvex manifolds can contain non-trivial closed complex submanifolds (see e.g. \cite{CM23}).

The rest of this section is devoted to recalling some background results we will need. Before doing this, we state three definitions which we will use repeatedly throughout the paper.

\begin{definition}\label{definitions}
Suppose that $\psi$ is a smooth function on $\ov{\Omega}\times (-\infty, 0]$, and write $z\in \Omega$. We consider the PDE:
\begin{equation}\label{psi equation}
\begin{cases}
(\ddbar u)^m\wedge \omega^{n-m} = \psi^m(z, u)\omega^n & \text{ in }\Omega,\\
u = 0 & \text{ on }\p\Omega.
\end{cases}
\end{equation}
Then we say that:
\begin{enumerate}\setlength{\itemsep}{1mm}
\item A function $\un{u}\in \mathcal{E}^1_m(\Omega)$ is  a {\bf subsolution} to \eqref{psi equation} if:
\[
(\ddbar \un{u})^{m}\wedge\omega^{n-m} \geq \psi^{m}(z,\un{u})\omega^n \ \text{in $\Omega$}.
\]
\item A function $\un{u}\in\mathcal{E}^1_m(\Omega)$ is a {\bf strict subsolution} to \eqref{psi equation} if:
\[
(\ddbar \un{u})^{m}\wedge\omega^{n-m} \geq (\psi(z,\un{u})+\ve_{0})^{m}\omega^{n} \ \text{in $\Omega$}.
\]
for some constant $\ve_{0}>0$.
\item A function $\un{u}\in \mathcal{E}^1_m(\Omega)$ is a {\bf supersolution} to \eqref{psi equation} if:
\[
(\ddbar \ov{u})^{m}\wedge\omega^{n-m} \leq \psi^{m}(z,\ov{u})\omega^n \ \text{in $\Omega$}.
\]
\end{enumerate}
\end{definition}

Since our $\psi$ is general, one cannot conclude that an arbitrary subsolution $\ul{u}$ lies below an arbitrary supersolution $\ov{u}$.

Note that being a supersolution (in the above sense) imposes some additional regularity on $\ov{u}$, as it forces $\Hm(u)$ is to be absolutely continuous with respect to $\omega^n$.

\subsection{Dirichlet problem}

We recall the following important result of Collins-Picard \cite{CP22} on solvability of the Dirichlet problem for compact Hermitian manifolds when the right-hand side does not depend on the solution.

\begin{theorem}[Theorem 1.1 of \cite{CP22}]\label{CP Dirichlet}
Let $(\Omega,\omega)$ be a compact Hermitian manifold with smooth boundary, $\chi\in\Gamma_{m}(\ov\Omega,\omega)$, $f\in C^{\infty}(\ov\Omega)$ a (strictly) positive function, and $\vp\in C^{\infty}(\de\Omega)$. Suppose that there exists $\un{u}\in C^{\infty}(\ov{\Omega})$ such that $\chi+\ddbar\un{u} \in \Gamma_{m}(\ov{\Omega},\omega)$ and
\[
\begin{cases}
(\chi+\ddbar\un{u})^{m}\wedge\omega^{n-m} \geq f^m\omega^{n} & \text{in $\Omega$}, \\
\un{u} = \vp & \text{on $\de\Omega$}.
\end{cases}
\]
Then there exists a unique $u\in C^{\infty}(\ov{\Omega})$ such that $\chi+\ddbar u \in \Gamma_{m}(\ov{\Omega},\omega)$ and
\[
\begin{cases}
(\chi+\ddbar u)^{m}\wedge\omega^{n-m} = f^m\omega^{n} & \text{in $\Omega$}, \\
u = \vp & \text{on $\de\Omega$}.
\end{cases}
\]
\end{theorem}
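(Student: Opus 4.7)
The plan is to prove existence via the standard continuity method, reducing the problem to a priori $C^{2,\alpha}(\overline{\Omega})$ estimates, and uniqueness via the comparison principle for the $m$-Hessian operator. Fix $h := (\chi + \ddbar \underline{u})^m \wedge \omega^{n-m} / \omega^n \geq f^m$, and for $t \in [0,1]$ consider the family
\begin{equation*}
(\chi + \ddbar u_t)^m \wedge \omega^{n-m} = \bigl(t f^m + (1-t) h\bigr)\omega^n \quad \text{in } \Omega, \qquad u_t = \varphi \text{ on } \partial\Omega,
\end{equation*}
so that $u_0 = \underline{u}$ is a solution at $t = 0$, and a solution at $t = 1$ is what we seek. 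Let $T \subset [0,1]$ denote the set of parameters admitting a smooth solution $u_t$ with $\chi + \ddbar u_t \in \Gamma_m(\overline{\Omega},\omega)$. Openness of $T$ is standard: the linearization of $u \mapsto (\chi + \ddbar u)^m \wedge \omega^{n-m} / \omega^n$ at an $m$-positive solution is a uniformly elliptic second-order operator with no zeroth-order term, so the implicit function theorem applies with zero Dirichlet data. Note that $\underline{u}$ remains a subsolution for each $t$ since the right-hand side is a convex combination bounded above by $h$.

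The main work is closedness of $T$, which requires uniform a priori estimates independent of $t$. I would carry these out in the following order. \emph{Step 1 ($C^0$).} The subsolution yields $u_t \geq \underline{u}$ by the comparison principle, and an upper bound follows from comparing $u_t$ to a suitable $\omega$-harmonic extension of $\varphi$ (or directly using $\chi \in \Gamma_m$). \emph{Step 2 (gradient).} The boundary gradient bound follows from sandwiching $u_t$ between $\underline{u}$ and an explicit upper barrier; the interior gradient estimate comes from the maximum principle applied to $|\nabla u_t|^2 \, e^{\Phi(u_t)}$ with $\Phi$ suitably chosen, exploiting the $C^0$ bound. \emph{Step 3 (second-order).} The interior Hessian estimate proceeds by the method of Hou--Ma--Wu, bounding $\max_{\overline{\Omega}} \lambda_{\max}(\ddbar u_t)$ via a maximum principle argument applied to a concave test function of the largest eigenvalue. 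This reduces the estimate to the boundary, where one establishes tangential--tangential bounds from the boundary data, tangential--normal bounds via standard barrier arguments involving $\underline{u}$, and finally the normal--normal bound by the method of Caffarelli--Kohn--Nirenberg--Spruck adapted to the Hermitian setting.

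Once these $C^2$ bounds are in place, the eigenvalues of $\chi + \ddbar u_t$ lie in a compact subset of $\Gamma_m$, so the equation is uniformly elliptic and concave after extracting the $m$-th root. The complex Evans--Krylov theorem then yields a uniform $C^{2,\alpha}(\overline{\Omega})$ bound, and bootstrapping via Schauder estimates gives uniform $C^\infty(\overline{\Omega})$ bounds. Thus $T$ is closed and $1 \in T$, proving existence. Uniqueness follows from the comparison principle: if $u, v$ are two solutions, then $u - v$ satisfies a homogeneous linear elliptic equation obtained by integrating the linearization along the segment from $u$ to $v$, with zero Dirichlet boundary data, forcing $u \equiv v$.

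The main obstacle I anticipate is the normal--normal boundary second-order estimate. This is the classical difficulty for fully nonlinear complex elliptic equations on manifolds with boundary, already present in the Euclidean work of Caffarelli--Kohn--Nirenberg--Spruck, and it requires carefully constructed barriers that exploit the subsolution $\underline{u}$ together with the strict $\Gamma_m$-positivity of $\chi + \ddbar \underline{u}$; the Hermitian (non-K\"ahler) setting introduces additional torsion terms that must be absorbed. The interior Hessian estimate is also delicate because, unlike the Monge--Amp\`ere case, one cannot simply use the equation to bound $\Delta u_t$ from the determinant.
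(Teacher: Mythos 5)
This statement is not proved in the paper at all: it is imported verbatim as Theorem 1.1 of Collins--Picard \cite{CP22}, so there is no internal proof to compare against. Your outline does follow the broad strategy of \cite{CP22} (continuity method, subsolution preserved along the path, $C^0$ and boundary gradient bounds from barriers, Hou--Ma--Wu-type second-order estimates, Evans--Krylov and bootstrapping), and the openness, uniqueness, and closedness framework is correct as stated.

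However, there is one genuine gap, in your Step 2. For the complex $m$-Hessian equation with $1<m<n$, a direct interior a priori gradient estimate obtained by applying the maximum principle to a quantity such as $|\nabla u_t|^2 e^{\Phi(u_t)}$ is \emph{not} known; the relevant commutation terms cannot be absorbed because the operator $\sigma_m^{i\bar j}\partial_i\partial_{\bar j}$ does not control $|\nabla u|^2$ the way the Laplacian or the Monge--Amp\`ere linearization does. Indeed, the present paper explicitly records that ``the corresponding estimate for the complex $m$-Hessian equation is a long standing open problem'' (this is precisely why the authors substitute the Dinew--Ko\l odziej stability estimate for B\l ocki's gradient bound elsewhere in the paper). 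The correct ordering, which is the one used both in \cite{CP22} and in Section \ref{A Priori Estimates} here, is to first establish a second-order estimate of the form $\sup_{\ov\Omega}|\nabla^2 u_t|\leq C\bigl(1+\sup_\Omega|\de u_t|^2\bigr)$ \emph{depending on the gradient}, and only then derive the gradient bound a posteriori by a blow-up/Liouville argument (Section 6 of \cite{CP22}): if $\sup|\de u_t|\to\infty$ one rescales, the Hessian bound gives a nontrivial limit with bounded Hessian, and a Liouville theorem for maximal $m$-subharmonic functions yields a contradiction. Without this restructuring your chain of estimates does not close. A secondary caveat: the normal--normal boundary estimate in the Hermitian $m$-Hessian setting is substantially harder than a routine adaptation of Caffarelli--Kohn--Nirenberg--Spruck and is the main technical contribution of \cite{CP22}; your sketch should not treat it as standard.
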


See \cite{CP22} for definitions. We have the following corollary:

\begin{corollary}\label{CP Dirichlet corollary}
Let $(\Omega,\omega)$ be a strongly $m$-pseudoconvex manifold, $f\in C^{\infty}(\ov\Omega)$ a (strictly) positive function, and $\vp\in C^{\infty}(\de\Omega)$. Then there exists a unique $u\in\mSH({\Omega},\omega)\cap C^{\infty}(\ov{\Omega})$ such that
\begin{equation}\label{CP Dirichlet corollary eqn}
\begin{cases}
(\ddbar u)^{m}\wedge\omega^{n-m} = f^m\omega^{n} & \text{in $\Omega$}, \\
u = \vp & \text{on $\de\Omega$}.
\end{cases}
\end{equation}
\end{corollary}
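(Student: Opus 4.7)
The plan is to reduce the corollary to Theorem \ref{CP Dirichlet} by constructing an explicit smooth subsolution from the defining function of $\Omega$, then handle uniqueness via the standard comparison principle.

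First I would extend the boundary data: choose any $\tilde\varphi \in C^\infty(\ov\Omega)$ with $\tilde\varphi|_{\p\Omega} = \varphi$. Let $\rho$ be the strictly $m$-sh defining function for $\Omega$ provided by the definition of a strongly $m$-pseudoconvex manifold, so that $\ddbar\rho - \e\omega$ is $m$-positive on $\ov\Omega$ for some $\e > 0$, $\rho < 0$ in $\Omega$, and $\rho|_{\p\Omega} = 0$. I then propose the candidate subsolution
\[
\un{u} := A\rho + \tilde\varphi,
\]
where $A > 0$ is a large constant to be chosen. By construction $\un{u} \in C^\infty(\ov\Omega)$ and $\un{u}|_{\p\Omega} = \varphi$.

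The key step is to verify that $\un{u}$ is a smooth subsolution to \eqref{CP Dirichlet corollary eqn} (in the sense required by Theorem \ref{CP Dirichlet} with $\chi = 0$). At any point of $\ov\Omega$, one has
\[
\ddbar \un{u} = A\ddbar \rho + \ddbar \tilde\varphi \geq A\e\,\omega + (A(\ddbar \rho - \e\omega) + \ddbar\tilde\varphi).
\]
Since the eigenvalues (with respect to $\omega$) of $\ddbar\rho - \e\omega$ lie in the closed $\Gamma_m$-cone uniformly on $\ov\Omega$, and since the eigenvalues of $\ddbar\tilde\varphi$ are bounded, a standard cone inclusion argument (for instance, using that $\Gamma_m$ is an open convex cone invariant under addition of elements of $\Gamma_m$, together with $\omega$-positivity) shows that for $A$ sufficiently large the form $\ddbar \un{u}$ has eigenvalues in $\Gamma_m$ and moreover satisfies
\[
\sigma_m(\un{u}) \geq (A\e)^m \binom{n}{m} - C(A),
\]
where $C(A)$ grows only polynomially of degree $m-1$ in $A$. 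Thus, for $A$ large enough depending on $\|\tilde\varphi\|_{C^2(\ov\Omega)}$, $\|\rho\|_{C^2(\ov\Omega)}$, $\e$, and $\sup_{\ov\Omega} f$, we obtain $(\ddbar \un{u})^m \wedge \omega^{n-m} \geq f^m \omega^n$ on $\ov\Omega$, which is exactly the hypothesis of Theorem \ref{CP Dirichlet}. Applying that theorem (with $\chi = 0$) yields the desired solution $u \in \mSH(\Omega,\omega) \cap C^\infty(\ov\Omega)$.

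For uniqueness, suppose $u_1, u_2$ are two such solutions. Both are smooth up to the boundary with identical boundary values $\varphi$ and produce the same smooth positive right-hand side $f^m \omega^n$. The comparison principle for the complex $m$-Hessian operator on $\mSH(\Omega) \cap C^0(\ov\Omega)$ (which is a routine consequence of the B\l ocki--Ceg\-rell theory recalled in the background section) then forces $u_1 \leq u_2$ and $u_2 \leq u_1$, hence $u_1 = u_2$.

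The only non-routine point is the cone computation justifying that $\un{u}$ is an $m$-sh subsolution for all large $A$; this is the main obstacle, and it rests essentially on the strict $m$-pseudoconvexity of $\rho$, which gives a uniform $m$-positive ``reserve'' $\e\omega$ that can absorb the bounded defect coming from $\ddbar\tilde\varphi$ once scaled by $A$.
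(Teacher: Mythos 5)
Your overall strategy is essentially the paper's: extend the boundary data, build the smooth subsolution $\un{u}=A\rho+\tilde\vp$ using the uniform $m$-positivity reserve $\e\omega$ of the defining function, and invoke Theorem \ref{CP Dirichlet}; the subsolution computation and the uniqueness argument via the comparison/domination principle are both sound.

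The one genuine issue is your choice $\chi=0$. Theorem \ref{CP Dirichlet}, as stated, requires the background form to satisfy $\chi\in\Gamma_{m}(\ov{\Omega},\omega)$, i.e.\ to be $m$-positive; the zero form lies on the boundary of that cone, not in it, so the theorem cannot be applied with $\chi=0$ as you propose. This is precisely why the paper takes $\chi=\ddbar\rho$ (strictly $m$-positive by the definition of strong $m$-pseudoconvexity), applies Theorem \ref{CP Dirichlet} to obtain $v$ with $\chi+\ddbar v\in\Gamma_{m}(\ov{\Omega},\omega)$, $(\chi+\ddbar v)^{m}\wedge\omega^{n-m}=f^{m}\omega^{n}$ and $v=\vp$ on $\de\Omega$, and then sets $u=\rho+v$; your subsolution corresponds to $\un{v}=(A-1)\rho+\tilde\vp$ in that formulation. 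The repair is a one-line change of background form, but as written your reduction does not meet the hypotheses of the theorem you cite.
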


\begin{proof}
Choose $\chi=\ddbar\rho$, extend $\vp$ to $\ov{\Omega}$ smoothly, and set $\un{u}=\vp+A\rho$. When $A$ is sufficiently large, $\un{u}\in C^{\infty}(\ov{\Omega})$ satisfies  $\chi+\ddbar\un{u} \in \Gamma_{m}(\ov{\Omega},\omega)$ and
\[
\begin{cases}
(\chi+\ddbar\un{u})^{m}\wedge\omega^{n-m} \geq f^m\omega^{n} & \text{in $\Omega$}, \\
\un{u} = \vp & \text{on $\de\Omega$}.
\end{cases}
\]
By Theorem \ref{CP Dirichlet}, there exists a unique $v\in C^{\infty}(\ov{\Omega})$ such that $\chi+\ddbar v \in \Gamma_{m}(\ov{\Omega},\omega)$ and
\[
\begin{cases}
(\chi+\ddbar v)^{m}\wedge\omega^{n-m} = f^m\omega^{n} & \text{in $\Omega$}, \\
v = \vp & \text{on $\de\Omega$}.
\end{cases}
\]
Then $u=\rho+v\in\mSH(\Omega,\omega)$ is the unique solution of \eqref{CP Dirichlet corollary eqn}.
\end{proof}

\subsection{First eigenvalue of the complex Laplacian}

It will be crucial to consider the solution to the linearized eigenvalue problem in our proofs of the main theorems. We briefly recall the results we shall need:

\begin{theorem}\label{first eigenvalue Laplacian}
Let $(\ov\Omega,\alpha)$ be a compact Hermitian manifold with smooth boundary, $\Delta_{\alpha}=\alpha^{i\ov{j}}\de_{i}\de_{\ov{j}}$ the associated complex Laplacian operator, and $f\in C^{\infty}(\ov\Omega)$ a (strictly) positive function. Then:
\begin{enumerate}\setlength{\itemsep}{1mm}
\item There exists a unique pair $(\mu_{1},v_{1})$ such that $\mu_{1}$ is a positive constant, $v_{1}\in C^{\infty}(\ov{\Omega})$ satisfies $\inf_{\Omega}v_{1}=-1$, and
\[
\begin{cases}
\Delta_{\alpha}v_{1} = -\mu_{1}v_{1}f & \text{in $\Omega$}, \\
v_{1} = 0 & \text{on $\de\Omega$}, \\
v_{1} < 0 & \text{in $\Omega$}.
\end{cases}
\]

\item The constant $\mu_{1}$ can be characterized as follows:
\[
\begin{split}
\mu_{1} = \sup\{\mu~|~ & \exists\, v\in C^{\infty}(\ov\Omega),\text{ with } \Delta_{\alpha}v\geq-\mu vf \text{ in }\Omega, \\
& \text{$v=0$ on $\de\Omega$, and $v<0$ in $\Omega$} \}.
\end{split}
\]

\item If $v\in C^{\infty}(\ov\Omega)$ is such that $\Delta_{\alpha}v_{}\geq-\mu_{1}vf$ in $\Omega$, $v=0$ on $\de\Omega$, and $v<0$ in $\Omega$, then there exists a constant $c$ such that $v=c v_{1}$.
\end{enumerate}
\end{theorem}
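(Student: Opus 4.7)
The plan is to recognize Theorem \ref{first eigenvalue Laplacian} as a standard Krein--Rutman type result for the linear elliptic operator $-\Delta_{\alpha}$ with Dirichlet boundary conditions, weighted by $f$. Since $\alpha$ is Hermitian, $-\Delta_{\alpha} = -\alpha^{i\bar{j}}\partial_{i}\partial_{\bar{j}}$ is a real second-order uniformly elliptic operator on $\ov{\Omega}$, so standard Schauder theory produces a compact solution operator $G$ for the Dirichlet problem $-\Delta_{\alpha}u = h$, $u|_{\de\Omega} = 0$. By the strong maximum principle and the Hopf boundary-point lemma, $G$ sends any nonzero nonnegative $h$ to a function strictly positive in $\Omega$ with strictly positive inward normal derivative on $\de\Omega$. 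Composing with multiplication by $f$, the operator $T := G \circ M_{f}$ is compact and strongly positive with respect to the standard Hopf cone in $C^{1}_{0}(\ov{\Omega})$.

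For part (1), I would invoke the Krein--Rutman theorem for strongly positive compact operators: $T$ has a simple positive eigenvalue $r(T) > 0$ realized by a strictly positive eigenfunction $w_{1}\in C^{\infty}(\ov\Omega)$ (the regularity follows from elliptic bootstrap). Setting $\mu_{1} := 1/r(T)$ and $v_{1} := -w_{1}/\sup_{\Omega} w_{1}$ produces the claimed pair, with the normalization $\inf_{\Omega} v_{1} = -1$ forcing uniqueness among eigenfunctions.

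For part (2), I would translate via $w := -v > 0$: the hypothesis $\Delta_{\alpha}v \geq -\mu f v$ becomes $-\Delta_{\alpha}w \geq \mu f w$, and applying $G$ to both sides yields $T w \leq \mu^{-1} w$. The Collatz--Wielandt characterization of the spectral radius of a positive compact operator then gives $r(T) \leq \mu^{-1}$, i.e., $\mu \leq \mu_{1}$; the supremum is attained at $\mu_{1}$ via $v = v_{1}$. For part (3), given a super-eigenfunction $v$ at $\mu = \mu_{1}$, set $w := -v$, $s := \sup_{\Omega}(w/w_{1}) > 0$, and $z := s w_{1} - w \geq 0$. A direct computation shows that $\widetilde{z} := z/w_{1}$ satisfies an elliptic inequality of the form $\Delta_{\alpha}\widetilde{z} + B(\nabla \widetilde{z}) \geq 0$ with no zeroth-order term. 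Since $\widetilde{z} \geq 0$ attains infimum $0$ somewhere on $\ov{\Omega}$, the strong maximum principle forces $\widetilde{z} \equiv 0$, so $w = s w_{1}$ and hence $v = cv_{1}$ for $c = s/\sup_{\Omega}w_1$.

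The main obstacle is the boundary behavior in the last step: $\widetilde{z}$ is a priori only defined in $\Omega$, and its infimum might be attained only on $\de\Omega$. Both concerns are resolved by observing that $w, w_{1} \in C^{\infty}(\ov{\Omega})$ vanish to precisely first order on $\de\Omega$ (again by Hopf), so $\widetilde{z}$ extends continuously to $\ov{\Omega}$ and the Hopf form of the strong maximum principle applies even at boundary minima. The remaining details are standard linear elliptic PDE theory.
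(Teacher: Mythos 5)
Your route for parts (1) and (2) — Krein--Rutman for the compact, strongly positive operator $T=G\circ M_{f}$ on the cone in $C^{1}_{0}(\ov{\Omega})$, together with the Collatz--Wielandt bound obtained by hitting $Tw\leq\mu^{-1}w$ with the adjoint eigenfunctional — is correct and genuinely different from the paper's, which instead follows Evans' construction: a perturbed fixed-point equation $u=\eta A(u+\ve w)$, an a priori bound on $\eta$ by iterating a comparison, Schaefer's theorem to force the solution set to be unbounded, and a normalization/compactness limit. Krein--Rutman buys simplicity for free; the paper's route stays closer to an elementary reference.

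Part (3), however, has the differential inequality pointing the wrong way. With $w=-v$, $w_{1}=-v_{1}$ and $s=\sup_{\Omega}(w/w_{1})$, the function $z=sw_{1}-w\geq 0$ satisfies $\Delta_{\alpha}z=-s\mu_{1}fw_{1}-\Delta_{\alpha}w\geq -\mu_{1}fz$, i.e.\ $z$ is a \emph{subsolution} of $\Delta_{\alpha}+\mu_{1}f$, and the ground-state substitution $z=w_{1}\tilde{z}$ turns this into $\Delta_{\alpha}\tilde{z}+B(\nabla\tilde{z})\geq 0$ — again a subsolution. The strong maximum principle for a subsolution with no zeroth-order term excludes interior \emph{maxima}, not interior minima: $|x|^{2}$ on a ball is a nonnegative subsolution of $\Delta$ attaining its infimum $0$ at an interior point without being constant. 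So you cannot conclude $\tilde{z}\equiv 0$ from $\inf\tilde{z}=0$, and the boundary discussion does not repair this (it would also have to contend with the drift $B\sim \nabla w_{1}/w_{1}$ blowing up like $1/d$ at $\de\Omega$).

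The fix is to normalize from the other side: set $t^{*}=\sup\{t>0~|~w\geq tw_{1}\text{ in }\Omega\}=\inf_{\Omega}(w/w_{1})$, which is finite, positive and attained on $\ov{\Omega}$ because $w$ and $w_{1}$ vanish to exactly first order by Hopf. Then $z=w-t^{*}w_{1}\geq 0$ satisfies $\Delta_{\alpha}z\leq -\mu_{1}fz\leq 0$, so $z$ is $\Delta_{\alpha}$-superharmonic outright and no ground-state transform is needed: the strong minimum principle gives either $z\equiv 0$ or $z>0$ in $\Omega$, and in the latter case the Hopf lemma yields $z\geq\ve w_{1}$ for some $\ve>0$, contradicting the maximality of $t^{*}$. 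This ``largest comparable multiple plus Hopf'' mechanism is exactly the one the paper uses in Lemma \ref{max prin c}.
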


\begin{proof}
If $\Omega$ is a domain in the Euclidean space, then the result is well-known, see e.g. \cite[Section 6.5.2, Theorem 3]{Ev10}. The same argument also works for general manifolds with smooth boundary; we provide a short sketch.

Let $k=n+2$ and consider the Banach space $X=H^k(\Omega)\cap H_0^1(\Omega)$.
We define the linear, compact operator $A:X\rightarrow X$ given by setting $A(w)=u$, where $u$ is the unique solution to
\begin{equation*}
	\begin{cases}
		\Delta_\alpha u = -wf & \text{in } \Omega  \\
		u=0 & \text{on } \partial\Omega.
	\end{cases}
\end{equation*}
Define also the cone $\mathcal{C}:=\{u\in X~|~u\geq0 \text{ in }\Omega\}$.

Fix a non-zero $w\in\mathcal{C}$, and define $v := A(w)$. By the strong maximum principle and the Hopf Lemma, there exists a constant $\theta>0$ such that $\theta v\geq w$ in $\Omega$.

Let $\varepsilon, \eta>0$ and consider the equation:
\begin{align}
\label{perturbation of linear equ}
	u=\eta A(u+\varepsilon w),
\end{align}
where $u\in\mathcal{C}$ is the unknown. Observe that the existence of a solution to \eqref{perturbation of linear equ} implies that $\eta\leq \theta$; indeed, by the comparison principle, we have:
\[
u\, \geq\, \eta\e A(w)\, \geq\, \frac{\eta}{\theta} \e w.
\]
We can iterate this:
\[
u\, \geq\, \eta \e A(u)\, \geq\, \frac{\eta^2}{\theta}\e A(w)\, \geq\, \frac{\eta^2}{\theta^2} \e w.
\]
Repeating this indefinitely then gives a contradiction unless $\eta \leq \theta$.

Consider now the subspace
\begin{align*}
	S_\varepsilon
	:=\{u\in\mathcal{C}~|~\text{there exists }0\leq\eta\leq2\theta\text{ such that }u=\eta A(u+\varepsilon w)\},
\end{align*}
which is unbounded in $X$ (if not, by  Schaefer's fixed point theorem \cite[Section 9.2.2, Theorem 4]{Ev10}, there exists a solution to \eqref{perturbation of linear equ} with $\eta$ replaced by $2\theta$, a contradiction).

Hence, we may find sequences $0\leq\eta_\varepsilon\leq2\theta$ and $v_\varepsilon\in\mathcal{C}$ with $\|v_\varepsilon\|_X\geq1$ and $v_\varepsilon=\eta_\varepsilon A(v_\e+\varepsilon w)$. Consider the renormalized sequence
\begin{align*}
	u_\varepsilon
	:=\frac{v_\varepsilon}{\|v_\varepsilon\|_X}.
\end{align*}
By the compactness of the operator $A$, there exists a subsequence $\varepsilon_k\rightarrow0$ such that
$\eta_{\varepsilon_k}\rightarrow\eta$ and $u_{\varepsilon_k}\rightarrow u$ in $X$.
Then we have $\|u\|_X=1$.
Since $u_\varepsilon=\eta_\varepsilon A(u_\varepsilon+\frac{\varepsilon w}{\|v_\varepsilon\|_X})$, by taking the limit, we have $u=\eta A(u)$ with $\eta>0$.
By standard elliptic regularity theory, we know that $u$ is smooth. This proves (1) with $\mu_1=\eta$ and $v_1=u$.

Similarly, (3) follows from the proof of \cite[Section 6.5.2, Theorem 3]{Ev10}.

We now show (2). Write $\mu_0:=\sup I$, where
\[
\begin{split}
I= \{\mu~|~ & \exists\, v\in C^{\infty}(\ov\Omega) \text{ with }\Delta_{\alpha}v\geq-\mu vf\text{ in }\Omega, \\
& \text{$v=0$ on $\de\Omega$ and $v<0$ in $\Omega$} \}.
\end{split}
\]
By (1), we have $\mu_1\in I$. Then $\mu_1\leq\mu_0$.
If $v<0$ and $\Delta_\alpha v\geq-\mu vf$ for some $\mu$, then we notice that
\begin{equation*}
	\mu\leq\inf_\Omega\left(-\frac{\Delta_\alpha v}{vf}\right).
\end{equation*}
Thus, we have
\begin{equation*}
	\mu_0=\sup_v\left(\inf_\Omega\left(-\frac{\Delta_\alpha v}{vf}\right)\right),
\end{equation*}
where $v\in C^\infty(\ov\Omega)$ with $v<0$ in $\Omega$ and $v|_{\partial\Omega}=0$.

If $\mu_0>\mu_1$, there exists a $v\in C^{\infty}(\ov\Omega)$ with $v<0$ in $\Omega$ and $v|_{\partial\Omega}=0$ such that
\begin{equation*}
	\inf_\Omega\left(-\frac{\Delta_\alpha v}{vf}\right) >\mu_1,
\end{equation*}
i.e. $\Delta_\alpha v>-\mu_1vf$. Then by (3), there exists a constant $c$ such that $v=cv_1$.
Then we have
\begin{equation*}
	\mu_1<\inf_\Omega\left( -\frac{\Delta_\alpha v}{vf}\right)=\inf_\Omega\left( -\frac{c\Delta v_1}{cv_1f}\right)=\mu_1.
\end{equation*}
This is a contradiction.
\end{proof}

\begin{lemma}\label{max prin c}
Let $\alpha$ be a Hermitian metric on $\ov{\Omega}$ and $(\mu_{1},v_{1})$ the unique eigenvalue of $\Delta_\alpha$, normalized to $\inf_\Omega v_1 = -1$. Suppose that $c\in C^{\infty}(\ov{\Omega})$ is such that $c<\mu_{1}$. If $w\in C^{\infty}(\ov{\Omega})$ satisfies
\[
\begin{cases}
\Delta_{\alpha}w \geq -cw & \text{in $\Omega$}, \\
w = 0 & \text{on $\de\Omega$},
\end{cases}
\]
then $w\leq0$.
\end{lemma}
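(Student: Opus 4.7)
The plan is a proof by contradiction, comparing $w$ with the eigenfunction $v_{1}$ via the ratio $g := w/(-v_{1})$. Suppose for contradiction that $\sup_{\Omega} w > 0$. Since $v_{1} < 0$ in $\Omega$, $g$ is smooth in $\Omega$; since $w|_{\p\Omega} = v_{1}|_{\p\Omega} = 0$ and $\p_{\nu} v_{1} > 0$ on $\p\Omega$ by the Hopf lemma applied to $v_{1}$ (which is strictly subharmonic, as $\Delta_{\alpha} v_{1} = -\mu_{1} v_{1} > 0$), a Taylor expansion along the normal shows $g$ extends continuously to $\ov{\Omega}$ with boundary value $-\p_{\nu} w/\p_{\nu} v_{1}$. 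Let $s_{0} := \max_{\ov{\Omega}} g > 0$, attained at some $z^{*} \in \ov{\Omega}$, and set $\varphi := w + s_{0} v_{1}$; then $\varphi \leq 0$ in $\Omega$ with $\varphi(z^{*}) = 0$. Combining $\Delta_{\alpha} w \geq -cw$ with $\Delta_{\alpha} v_{1} = -\mu_{1} v_{1}$ yields
\[
(\Delta_{\alpha} + c)\varphi \;\geq\; s_{0}(-v_{1})(\mu_{1} - c) \;>\; 0 \quad \text{in } \Omega,
\]
using $v_{1} < 0$ and $c < \mu_{1}$ in $\Omega$.

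If $z^{*} \in \Omega$, I would compute $\Delta_{\alpha} g$ directly at the interior maximum. The critical point conditions $\p_{i} w = -g\,\p_{i} v_{1}$ and $\p_{\ov j} w = -g\,\p_{\ov j} v_{1}$ at $z^{*}$ cause the mixed gradient terms to cancel, and one obtains
\[
\Delta_{\alpha} g(z^{*}) \;=\; -\frac{1}{v_{1}(z^{*})}\bigl(\Delta_{\alpha} w + g\,\Delta_{\alpha} v_{1}\bigr)(z^{*}) \;\geq\; g(z^{*})\bigl(\mu_{1} - c(z^{*})\bigr) \;>\; 0,
\]
contradicting the second derivative test $\Delta_{\alpha} g(z^{*}) \leq 0$.

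If $z^{*} \in \p\Omega$, the boundary identity $s_{0} = -\p_{\nu} w(z^{*})/\p_{\nu} v_{1}(z^{*})$ forces $\p_{\nu} \varphi(z^{*}) = 0$; I would then use the Hopf lemma to derive a contradiction. First note $\varphi \not\equiv 0$: otherwise $w = -s_{0} v_{1}$ would give $\Delta_{\alpha} w = -\mu_{1} w$, and the hypothesis would reduce to $(\mu_{1} - c)w \leq 0$, contradicting $w > 0$ in $\Omega$ and $c < \mu_{1}$. The strict inequality $(\Delta_{\alpha} + c)\varphi > 0$ and the maximum principle then force $\varphi < 0$ throughout $\Omega$ (an interior zero $z_{0}$ of $\varphi$ would give $\Delta_{\alpha} \varphi(z_{0}) \leq 0$ and $c(z_{0})\varphi(z_{0}) = 0$, contradicting the strict positivity). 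Finally, Hopf's lemma at $z^{*}$ yields $\p_{\nu} \varphi(z^{*}) > 0$, contradicting $\p_{\nu} \varphi(z^{*}) = 0$. The main subtlety is that Hopf here involves an operator whose zeroth-order coefficient $c$ is of indeterminate sign; this is harmless because the maximum value $\varphi(z^{*}) = 0$, so no sign condition on $c$ is needed in the classical statement of the lemma.
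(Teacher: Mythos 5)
Your argument is correct and is essentially the paper's proof: your $s_{0}=\max_{\ov\Omega} w/(-v_{1})$ is exactly the paper's $\theta=\inf\{\gamma \mid w\leq\gamma(-v_{1})\}$, and both proofs derive the same strict differential inequality for $w+s_{0}v_{1}$ and then contradict optimality via the strong maximum principle and the Hopf lemma (with the same observation that the sign of $c$ is irrelevant because the touching value is $0$). The only cosmetic difference is that you case-split on where the ratio attains its maximum, whereas the paper handles both situations at once by showing $w+\theta v_{1}\leq\ve v_{1}$ and contradicting the definition of $\theta$.
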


\begin{proof}
Define the constant $\theta$ by
\[
\theta = \inf\{\gamma~|~w\leq\gamma(-v_{1}) \ \text{in $\Omega$} \}.
\]
Then $w+\theta v_{1}\leq0$ in $\Omega$. It suffices to show $\theta\leq0$. Supposing for the sake of a contradiction that $\theta>0$, we compute
\begin{equation}\label{max prin c eqn 1}
\Delta_{\alpha}(w+\theta v_{1}) \geq -cw-\theta\mu_{1}v_{1} > -c(w+\theta v_{1}) \ \ \text{in $\Omega$}.
\end{equation}
Let $C > 0$ be such that $-C \leq c$. Since $w + \theta v_1 \leq 0$, \eqref{max prin c eqn 1} implies that:
\[
(\Delta_\alpha - C)(w + \theta v_1)\geq 0.
\]
Using the strong maximum principle, either $w+\theta v_{1}\equiv0$ or $w+\theta v_{1}<0$ in $\Omega$. By \eqref{max prin c eqn 1}, the former case cannot happen. In the latter case, by the Hopf lemma, there exists $\ve>0$ such that
\[
w+\theta v_{1} \leq \ve v_{1} \ \ \text{in $\Omega$},
\]
contradicting the definition of $\theta$.
\end{proof}

\subsection{Stability estimate}\label{stability subsection}

As mentioned in the introduction, we will make critical usage of the stability theorem of Dinew-Ko\l odziej \cite{DK14}; it can be shown for strongly $m$-pseudoconvex manifolds in the same way as for domains, so we only sketch the proof. We need the following standard $L^\infty$ bound for smooth solutions:

\begin{theorem}\label{zero order estimate}
Let $(\ov\Omega,\omega)$ be a compact strongly $m$-pseudoconvex K\"ahler manifold with smooth boundary, and let $p > \frac{n}{m}$. Suppose that $u\in\mSH(\Omega,\omega)\cap C^{\infty}(\ov{\Omega})$ satisfies
\[
\begin{cases}
(\ddbar u)^{m}\wedge\omega^{n-m} = f\omega^{n} & \text{in $\Omega$}, \\
u = \vp & \text{on $\de\Omega$}.
\end{cases}
\]
for some (strictly) positive function $f\in C^{\infty}(\ov\Omega)$ and $\vp\in C^{\infty}(\de\Omega)$. Then there exists a constant depending only on $p$, $m$, $n$ and $(\ov\Omega,\omega)$ such that
\[
\|u\|_{L^{\infty}(\Omega)} \leq \|\vp\|_{L^{\infty}(\de\Omega)}+C\|f\|_{L^{p}(\Omega)}^{1/m}.
\]
\end{theorem}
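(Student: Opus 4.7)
The plan is to adapt the pluripotential-theoretic proof of Dinew-Ko\l odziej \cite{DK14}, originally stated for strongly $m$-pseudoconvex domains in $\C^n$, to the manifold setting. The ingredient that makes this extension possible is the smooth, negative, strictly $m$-sh defining function $\rho$ furnished by the strong $m$-pseudoconvexity of $\Omega$.

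The upper bound $\sup_\Omega u \leq \|\vp\|_{L^\infty(\de\Omega)}$ is immediate from the classical maximum principle: any $m$-sh function $u$ has $\textrm{tr}_\omega(\ddbar u) \geq 0$, hence $u$ is $\omega$-subharmonic. For the lower bound, I would first normalize $w := u - \inf_{\de\Omega}\vp$ so that $w \geq 0$ on $\de\Omega$ while still satisfying the same Hessian equation, and then work with the sublevel sets $U(s) := \{w < -s\} \Subset \Omega$.

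The core of the proof is the Ko\l odziej capacity iteration. Set $\psi := \rho/\sup_\Omega|\rho| \in \mSH(\Omega)$, which takes values in $[-1, 0]$. The comparison principle for $m$-sh functions applied between $w$ and $w + s + t\psi$ on $U(s)$ should yield
\[
t^m \cp_m(U(s+t)) \leq C \int_{U(s)} f\, \omega^n.
\]
For $p > n/m$, the Dinew-Ko\l odziej volume-capacity comparison then gives $\int_K f\, \omega^n \leq C \|f\|_{L^p(\Omega)}\, \cp_m(K)^\alpha$ for some $\alpha = \alpha(p,m,n) > 1$, so that
\[
\cp_m(U(s+t)) \leq C t^{-m} \|f\|_{L^p(\Omega)}\, \cp_m(U(s))^\alpha.
\]
A standard De~Giorgi-style iteration in $s$ then forces $\cp_m(U(s_0)) = 0$, and hence (by comparison against $s_0 \psi$) $U(s_0) = \emptyset$, for some $s_0 \leq C \|f\|_{L^p(\Omega)}^{1/m}$. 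Undoing the normalization yields $\inf_\Omega u \geq \inf_{\de\Omega}\vp - C\|f\|_{L^p}^{1/m} \geq -\|\vp\|_{L^\infty(\de\Omega)} - C\|f\|_{L^p}^{1/m}$.

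The main obstacle is not the iteration itself (which is essentially word-for-word the Euclidean argument) but verifying that the pluripotential-theoretic ingredients --- the definition of $\cp_m$, the comparison principle for $m$-sh functions, and the volume-capacity estimate --- carry over from Euclidean domains to abstract strongly $m$-pseudoconvex K\"ahler manifolds. Each extension is handled through localization on coordinate charts together with the global function $\rho$, which serves to glue local barriers and to bound test functions in the definition of $\cp_m$. Since this material is standard, I would state only the statements needed here and defer the technical verifications to Appendix~\ref{Some Pluripotential Theory}; this matches the authors' plan of only sketching the proof.
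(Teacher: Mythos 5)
Your proposal is correct and follows exactly the route the paper intends: the authors dispose of this theorem in one sentence by invoking the Dinew--Ko\l odziej volume-capacity estimate and asserting that the domain proof carries over, and your sketch is a faithful (and more detailed) reconstruction of that capacity-iteration argument, with the defining function $\rho$ playing the role of the Euclidean barrier. No gaps beyond the standard pluripotential-theoretic verifications you already flag as deferred.
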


Theorem \ref{zero order estimate} can be proved exactly as for domains, by using Dinew-Ko\l odziej's volume-capacity estimate \cite{DK14}.

\begin{theorem}[Theorem 2.8 of \cite{DK14}]\label{stability estimate}
For $i=1,2$, suppose that $u_{i}\in\mSH(\Omega,\omega)\cap C^{\infty}(\ov\Omega)$ satisfies
\[
\begin{cases}
(\ddbar u_{i})^{m}\wedge\omega^{n-m} = f_{i}^m\omega^{n} & \text{in $\Omega$}, \\
u_{i} = \vp_{i} & \text{on $\de\Omega$}.
\end{cases}
\]
for some (strictly) positive function $f_{i}\in C^{\infty}(\ov\Omega)$ and $\vp_{i}\in C^{\infty}(\de\Omega)$. Then for any $p>n/m$, there exists a constant $C$ depending only on $p$ and $(\Omega,\omega)$ such that
\[
\|u_{1}-u_{2}\|_{L^{\infty}(\Omega)} \leq \|\vp_{1}-\vp_{2}\|_{L^{\infty}(\de\Omega)}+C\|f_{1}^m-f_{2}^m\|_{L^{p}(\Omega)}^{1/m}.
\]
\end{theorem}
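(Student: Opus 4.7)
The plan is to adapt the proof of \cite[Theorem 2.8]{DK14}, originally written for strongly pseudoconvex bounded domains in $\C^n$, to our setting of a strongly $m$-pseudoconvex K\"ahler manifold. The argument rests on three pluripotential-theoretic inputs—the comparison principle for $m$-subharmonic functions, a capacity estimate relating the $m$-capacity of a superlevel set to an integral of the $m$-Hessian measure, and the Dinew--Ko\l{}odziej volume-capacity inequality valid for $p > n/m$—all of which are local in nature and extend from bounded domains to our abstract setting, as recorded in Appendix \ref{Some Pluripotential Theory}.

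First I would reduce to the case of equal boundary values. Set $M := \|\vp_1 - \vp_2\|_{L^\infty(\de\Omega)}$ and $u_1' := u_1 - M$; then $u_1'$ still solves $(\ddbar u_1')^{m}\wedge\omega^{n-m} = f_1^m\,\omega^{n}$ in $\Omega$, but now $u_1' \leq u_2$ on $\de\Omega$. By symmetry (after swapping the roles of the two solutions), it then suffices to prove
\[
\sup_\Omega (u_2 - u_1')_{+} \leq C\,\|f_1^m - f_2^m\|_{L^p(\Omega)}^{1/m},
\]
and the boundary term $M$ combines with this bound to yield the claimed estimate.

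For this interior bound, given $t > 0$ I would consider the superlevel sets $U_t := \{u_2 > u_1' + t\} \subset\subset \Omega$. Applying the comparison principle to $u_2$ and $u_1' + t$ on $U_t$ gives
\[
\int_{U_t} f_2^{m}\,\omega^{n} \,\leq\, \int_{U_t} f_1^{m}\,\omega^{n} \,\leq\, \int_{U_t} f_2^{m}\,\omega^{n} + \|f_1^m - f_2^m\|_{L^{p}(\Omega)}\,\vol(U_t)^{1 - 1/p}
\]
by H\"older's inequality, thereby controlling $\int_{U_t}\Hm(u_2)$ in terms of the $L^p$ discrepancy plus a volume term. Combining this with a standard capacity estimate of the form $(t-s)^m\,\cp_m(U_t) \leq \int_{U_s}\Hm(u_2)$ for $0 < s < t$, together with the Dinew--Ko\l{}odziej volume-capacity inequality $\vol(E) \leq C\,\cp_m(E)^{1+\delta}$ (for some $\delta = \delta(p) > 0$ when $p > n/m$), yields a self-improving functional inequality for the non-increasing function $h(t) := \cp_m(U_t)^{1/m}$. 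Iterating this in the standard Ko\l{}odziej fashion then forces $U_t = \emptyset$ once $t \geq C\,\|f_1^m - f_2^m\|_{L^p(\Omega)}^{1/m}$, which is exactly the desired bound.

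The main obstacle is not conceptual: once all the pluripotential-theoretic ingredients are in place, the iteration is a direct transcription of \cite{DK14}. Rather, the work lies in verifying that the volume-capacity inequality and the capacity-Hessian estimate continue to hold on a strongly $m$-pseudoconvex manifold—which may have non-trivial complex geometry, such as closed subvarieties—rather than on an open subset of $\C^n$. This verification is precisely the content of the appendix.
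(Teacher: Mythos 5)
Your proposal takes a genuinely different route from the paper, but as written it has a gap that prevents the iteration from producing the stated bound. The crux is what the comparison principle actually gives you on the superlevel sets. It yields $\int_{U_t}\Hm(u_2)\leq\int_{U_t}\Hm(u_1)$, i.e.\ it controls the \emph{difference} $\int_{U_t}(f_1^m-f_2^m)\,\omega^n$, which H\"older then bounds by $\|f_1^m-f_2^m\|_{L^p}\vol(U_t)^{1-1/p}$. But the capacity estimate $(t-s)^m\cp_m(U_t)\leq\int_{U_s}\Hm(\cdot)$ requires the \emph{full} Hessian mass of $U_s$ on the right, and your decomposition of that mass is ``small discrepancy term'' plus $\int_{U_s}f_2^m\,\omega^n$. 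The latter is of order $(\inf f_2^m)\vol(U_s)$ and is in no way controlled by $\|f_1^m-f_2^m\|_{L^p}$. Feeding $\vol(U_s)\leq C\cp_m(U_s)^{1+\delta}$ into the De Giorgi--Ko\l odziej iteration then forces $U_t=\emptyset$ only once $t$ exceeds a quantity depending on $\cp_m(U_0)$ --- which can be the capacity of essentially all of $\Omega$ --- so the argument reproduces the \emph{a priori} $L^\infty$ bound (Theorem \ref{zero order estimate}) with a constant independent of the discrepancy, rather than the stability estimate. (Test case: $f_1^m=f_2^m+\eta$ for small constant $\eta$; your functional inequality does not degenerate as $\eta\to0$.) To make a direct iteration work one would need an additional input showing $\cp_m(U_{t_0})$ is already small for some $t_0$ comparable to the discrepancy, which is not supplied. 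There is also a sign slip in the reduction: with $u_1'=u_1-M$ you have $u_1'\leq u_2$ on $\p\Omega$, so the sets $\{u_2>u_1'+t\}$ need not be relatively compact, and the claim $\sup_\Omega(u_2-u_1')_+\leq C\|f_1^m-f_2^m\|_{L^p}^{1/m}$ is false even when $f_1=f_2$ (take $\vp_2=\vp_1+M$); you should shift by $+M$ and estimate $(u_2-(u_1+M))_+$, or equivalently estimate $(u_1'-u_2)_+$.

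The paper's proof avoids all of this by converting stability directly into the $L^\infty$ estimate: let $w_\ve$ solve $\Hm(w_\ve)=\sqrt{(f_1^m-f_2^m)^2+\ve^2}\,\omega^n$ with boundary value $-\|\vp_1-\vp_2\|_{L^\infty(\p\Omega)}$ (Corollary \ref{CP Dirichlet corollary}). Superadditivity of $\Hm$ gives $\Hm(u_1+w_\ve)\geq\Hm(u_1)+\Hm(w_\ve)\geq\Hm(u_2)$, the domination principle gives $u_1+w_\ve\leq u_2$, and Theorem \ref{zero order estimate} bounds $\|w_\ve\|_{L^\infty}$ by exactly the right quantity; letting $\ve\to0$ and swapping roles finishes. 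The volume--capacity machinery of Dinew--Ko\l odziej is thus only needed once, inside Theorem \ref{zero order estimate}, and no iteration on the difference $u_1-u_2$ is required. I would recommend adopting this auxiliary-solution argument.
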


\begin{proof}
By Corollary \ref{CP Dirichlet corollary}, there exists  $w_{\ve}\in\mSH(\ov{\Omega},\omega)\cap C^{\infty}(\ov{\Omega})$ such that:
\[
\begin{cases}
\Hm(w_{\ve}) = \sqrt{(f_1^m-f_2^m)^2+\e^2}\omega^{n} & \text{in $\Omega$}, \\
w_{\ve} = -\|\vp_{1}-\vp_{2}\|_{L^{\infty}(\de\Omega)} & \text{on $\de\Omega$}.
\end{cases}
\]
Then the domination principle (Proposition \ref{Domination Principle}), shows that $u_1 + w_\e \leq u_2$, since:
\[
\begin{cases}
\Hm(u_{1}+w_{\ve}) \geq \Hm(u_{1})+\Hm(w_{\ve}) \geq \Hm(u_{2}) & \text{in $\Omega$}, \\
u_{1}+w_{\ve} \leq u_{2} & \text{on $\de\Omega$}.
\end{cases}
\]
We conclude that:
\[
u_1 - u_2 \leq \norm{w_\e}_{L^\infty(\Omega)} \leq \|\vp_{1}-\vp_{2}\|_{L^{\infty}(\de\Omega)}+C\norm{\sqrt{(f_1^m-f_2^m)^2 + \e^2}}_{L^p(\Omega)}^{1/m},
\]
by Theorem \ref{zero order estimate}. Letting $\e\rightarrow 0$ and switching the roles of $u_1, u_2$ finishes.
\end{proof}

An immediate corollary of this and the continuity of the complex Hessian operator is the existence and uniqueness of a continuous solution $u\in\mSH(\Omega)\cap C^0(\ov{\Omega})$ to the equation:
\[
\begin{cases}
\Hm(u) = f^m\omega^n & \text{ in }\Omega,\\
u = \vp &\text{ on }\p\Omega,
\end{cases}
\]
when $0 \leq f\in L^p(\Omega)$ with $p > n$ and $\vp\in C^{0}(\de\Omega)$.

\subsection{Finite Energy Classes}\label{subsection: energy}

We recall some facts about finite energy classes of $m$-subharmonic functions, see e.g. \cite{BBGZ13, Ceg98, Lu15}. These results are standard for strongly $m$-pseudoconvex domains in $\C^n$, and the adaptations to the manifold case are straightforward; we have collected some further details in the Appendix.

If $u\in \mSH(\Omega)\cap L^\infty(\Omega)$, then the Chern-Levine-Nirenburg inequalities imply that:
\[
\Hm(u) := (\ddbar u)^m\wedge \omega^{n-m}
\]
is a well-defined Radon measure on $\Omega$.

\begin{definition}
We define the class $\mathcal{E}^0_m(\Omega)$ to be the class of $m$-subharmonic functions $u\in \mSH(\Omega)\cap L^\infty(\Omega)$ which additionally satisfy $\int_\Omega \Hm(u) < \infty$ and
\[
\lim_{x\rightarrow z} u(x) = 0\text{ for all }z\in \p\Omega.
\]
\end{definition}
One can show that if $u_j\in\mathcal{E}^0_m(\Omega)$ decrease to $u\in\mathcal{E}^0_m(\Omega)$ as $j\rightarrow\infty$, then the measures $\Hm(u_j)$ weakly converge to $\Hm(u)$, following \cite{BT76, Ceg98, Lu15}.

We then define the finite energy class $\mathcal{E}^1_m(\Omega)$ as follows:
\begin{definition}
Suppose $u\in\mSH(\Omega)$. Then we say that $u\in \mathcal{E}^1_m(\Omega)$, the finite energy class, if there exists a decreasing sequence $u_j\in\mathcal{E}^0_m(\Omega)$ such that:
\[
\lim_{j\rightarrow\infty} u_j = u\ \text{ and }\ \lim_{j\rightarrow \infty} \int_\Omega (-u_j)\Hm(u_j) =: (m+1)E_m(u) < \infty.
\]
\end{definition}

If $u\in\mathcal{E}^1_m(\Omega)$ and $u_j\in \mathcal{E}^0_m(\Omega)$ is any sequence decreasing to $u$, then it is standard \cite{Lu15} to show that:
\[
\lim_{j\rightarrow\infty} E_m(u_j) = E_m(u).
\]
Moreover, the equality $E_m(u) = \frac{1}{m+1}\int_\Omega (-u)\Hm(u)$ holds.

We conclude with some energy estimates which play an important role in Section \ref{Existence and Uniqueness of the Eigenfunction}. The first is due to B\l ocki \cite{Bl93}:
\begin{proposition}\label{Poincare type inequality proposition}
Suppose that $w\in\mathcal{E}^1_m(\Omega), v\in \mathcal{E}^0_m(\Omega)$. Then:
 \begin{equation}
	\label{Blocki ineq}
		\int_\Omega(-w)^{m+1} (\ddbar v)^m\wedge\omega^{n-m}
		\leq(m+1)!\|v\|_{L^\infty}^m E_m(w);
\end{equation}
\end{proposition}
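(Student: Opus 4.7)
The plan is to prove the inequality via an approximation argument combined with B{\l}ocki's iterative integration-by-parts scheme.

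First I would reduce to the case in which both $v$ and $w$ are smooth on $\ov{\Omega}$ and vanish on $\de\Omega$. By the definition of $\mathcal{E}^1_m(\Omega)$, one can choose a decreasing sequence $w_j \in \mathcal{E}^0_m(\Omega)$ with $w_j \searrow w$ and $E_m(w_j) \to E_m(w)$; by further regularizing in local charts and gluing against a suitable barrier, each $w_j$ may be taken smooth on $\ov{\Omega}$. The function $v$ is approximated similarly by smooth $m$-sh functions with $L^\infty$-norm bounded by $\|v\|_{L^\infty}$ and vanishing on $\de\Omega$. Invoking the weak continuity of $(\ddbar v)^m\wedge\omega^{n-m}$ along decreasing sequences in $\mathcal{E}^0_m(\Omega)$ (cf.\ Appendix \ref{Some Pluripotential Theory}), both sides of the claimed inequality pass to the limit, reducing to the smooth boundary-vanishing case.

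In this case, for $0 \leq k \leq m$, I would define the mixed integrals
\[
I_k := \int_\Omega (-w)^{k+1}(\ddbar v)^k\wedge(\ddbar w)^{m-k}\wedge\omega^{n-m},
\]
so that $I_0 = (m+1)E_m(w)$ while $I_m$ is the left-hand side of the proposition. For $1\leq k\leq m$, peel off one factor of $\ddbar v$ and integrate by parts twice; since $v, w$ vanish on $\de\Omega$ and $\omega^{n-m}$ is closed, all boundary terms drop out, yielding
\[
I_k = \int_\Omega v\, \ddbar\bigl[(-w)^{k+1}\bigr]\wedge(\ddbar v)^{k-1}\wedge(\ddbar w)^{m-k}\wedge\omega^{n-m}.
\]
By the Leibniz rule,
\[
\ddbar\bigl[(-w)^{k+1}\bigr] = -(k+1)(-w)^k\ddbar w + (k+1)k(-w)^{k-1}\sqrt{-1}\,\de w\wedge\db w,
\]
so $I_k$ splits accordingly. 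The ``gradient'' piece pairs the non-positive function $v$ with a non-negative $(n,n)$-current and so contributes a non-positive quantity which can be discarded; bounding $(-v)\leq \|v\|_{L^\infty}$ in the remaining piece produces the one-step inequality
\[
I_k \leq (k+1)\|v\|_{L^\infty}\,I_{k-1}.
\]

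Chaining these one-step estimates from $k=m$ down to $k=1$ and inserting $I_0 = (m+1)E_m(w)$ yields the claimed Poincar\'e-type bound (up to tracking the combinatorial constant). The main obstacle in this outline is the first step: rigorously justifying the integration by parts in the finite-energy class. This reduces to the weak continuity of mixed Hessian integrals $\int v\,\Hm(w_j)$ along the approximating sequences, a standard pluripotential-theoretic fact which, in our manifold setting, is collected in Appendix \ref{Some Pluripotential Theory}.
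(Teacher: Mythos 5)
Your proof is essentially the paper's: the same chain of mixed integrals $I_k$, the same integration by parts with vanishing boundary terms, the same Leibniz expansion of $\ddbar\bigl[(-w)^{k+1}\bigr]$ with the non-negative gradient piece discarded against $v\leq 0$, and the same induction down to $I_0=(m+1)E_m(w)$. The only differences are that you spell out the approximation step that the paper delegates to the domain case in \cite{Lu15}, and that (as you yourself flag) the chaining actually produces the constant $(m+1)\cdot(m+1)!$ rather than $(m+1)!$ --- a slip already present in the paper's statement and harmless for every application of the inequality.
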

\begin{proof}
The proof follows from an integration by parts; indeed, for any $1 \leq p \leq m$, we have:
\[
\begin{split}
& \int_\Omega(-w)^{p+1}(\ddbar w)^{m-p}\wedge(\ddbar v)^p\wedge\omega^{n-m}\\
= {} & \int_\Omega v\ddbar(-w)^{p+1}\wedge(\ddbar w)^{m-p}\wedge(\ddbar v)^{p-1}\wedge\omega^{n-m}.
\end{split}
\]
Since $\ddbar(-w)^{p+1}\geq -(p+1)(-w)^{p}\ddbar w,$ we can apply induction to conclude the argument. The validity of these operations can be shown in the same way as the domain case, see e.g. \cite{Lu15}.
\end{proof}

\subsection{Subextenstion Theorem}

For the proof of Theorem \ref{Theorem 3}, we need the following result of Cegrell-Ko\l odziej-Zeriahi \cite{CKZ11} and Pham \cite{Pham08}:
\begin{theorem}\label{Subextension Theorem}
Suppose that $\Omega'\subset \Omega$ are strongly $m$-pseudoconvex manifolds, with $\omega$ a K\"ahler form on $\Omega$. Then for any $w'\in \mathcal{E}^1_m(\Omega')$, there exists some $w\in\mathcal{E}^1_m(\Omega)$ such that:
\[
w \leq w'\text{ in }\Omega,
\]
and
\[
E_{m, \Omega}(w) \leq E_{m, \Omega'}(w').
\]
Here we write $E_{m,\Omega}(w)$ for the energy of $w$ over $\Omega$.
\end{theorem}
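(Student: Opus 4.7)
The plan is to follow the Perron envelope strategy of Cegrell--Ko\l odziej--Zeriahi and Pham, which produces the desired subextension via the largest $m$-sh function lying below $w'$ on $\Omega'$. The key point is that the resulting envelope is maximal off the contact set, allowing the energy on $\Omega$ to be compared directly to the energy on $\Omega'$.

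\emph{Step 1 (Reduction to the bounded case).} Replacing $w'$ by the truncations $w'_j := \max(w',-j) \in \mathcal{E}^0_m(\Omega')$, we reduce to the situation $w' \in \mathcal{E}^0_m(\Omega')$ since $w'_j \searrow w'$ and $E_{m,\Omega'}(w'_j) \to E_{m,\Omega'}(w')$ by monotone convergence of the energy along decreasing sequences. Once the theorem is proved in the bounded case, the corresponding subextensions $w_j \in \mathcal{E}^0_m(\Omega)$ can be chosen monotonically and we pass to the limit $w := (\lim_j w_j)^*$; the energy bound persists by Fatou-type arguments for the Hessian measure.

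\emph{Step 2 (The envelope).} For bounded $w'$, define
\[
w := \bigl(\sup\{v \in \mSH(\Omega) : v \le 0 \text{ on } \Omega,\ v \le w' \text{ on }\Omega'\}\bigr)^*.
\]
A Choquet lemma argument shows $w \in \mSH(\Omega)$. Comparing with $A\rho_\Omega$, where $\rho_\Omega$ is the defining function of $\Omega$ and $A \gg 1$ is chosen so that $A\rho_\Omega \le w'$ on $\Omega'$ (possible since $w'$ is bounded and $\rho_\Omega$ is strictly negative on $\Omega'$), we see $A\rho_\Omega \le w$, hence $w$ has zero boundary values on $\p\Omega$. Thus $w \in \mathcal{E}^0_m(\Omega)$.

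\emph{Step 3 (Localization of $\Hm(w)$).} On $\Omega \setminus \overline{\Omega'}$, the constraint $v \le w'$ on $\Omega'$ is vacuous locally, so $w$ is locally maximal and $\Hm(w) = 0$ there. Similarly, on the open set $\{w < w'\} \cap \Omega'$ the envelope may be slightly perturbed upward in small balls while staying $\le w'$, so $w$ is locally maximal and $\Hm(w) = 0$ there as well. Hence $\Hm(w)$ is supported on the contact set $K := \{w = w'\} \cap \Omega'$ (up to a possible boundary piece on $\p\Omega'$, handled below). A standard balayage argument, using the comparison principle applied to $w$ and $w' + \e$ on small open subsets of $K$ and letting $\e \to 0^+$, yields the crucial pointwise bound
\[
\mathbf{1}_{K}\,\Hm(w) \le \mathbf{1}_{K}\,\Hm(w').
\]

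\emph{Step 4 (Energy inequality).} Since $w = w'$ on $K$ and $\Hm(w)$ is supported on $K$ (modulo $\p\Omega'$), we can estimate
\[
(m+1) E_{m,\Omega}(w) = \int_{\Omega}(-w)\Hm(w) = \int_{K}(-w')\Hm(w) \le \int_{K}(-w')\Hm(w') \le (m+1) E_{m,\Omega'}(w').
\]
The possible concentration of $\Hm(w)$ on $\p\Omega' \cap \Omega$ is dealt with by an approximation: replace $\Omega'$ by a slightly larger strongly $m$-pseudoconvex neighborhood $\Omega'_\delta$, apply the argument there, and take $\delta \to 0$; the uniform bound on the energies passes to the limit since the family $\{w_\delta\}$ is locally uniformly bounded in $\mathcal{E}^1_m(\Omega)$ by Proposition~\ref{Poincare type inequality proposition}.

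\emph{Main obstacle.} The delicate point is justifying the inequality $\mathbf{1}_{K}\Hm(w) \le \mathbf{1}_{K}\Hm(w')$ rigorously, since the contact set $K$ is a priori merely a Borel set and $w'$ is only $m$-sh (not continuous). This is where pluripotential machinery is essential: one must use the full strength of the comparison principle (Proposition~\ref{Domination Principle} and its relatives from the Appendix) together with quasi-continuity of $m$-sh functions to localize the measure-theoretic comparison. Handling the case when $\p\Omega'$ meets $\p\Omega$ requires the supplementary approximation argument in Step 4, which ultimately rests on the stability estimate (Theorem~\ref{stability estimate}) to control the envelopes along the exhaustion $\Omega'_\delta \searrow \Omega'$.
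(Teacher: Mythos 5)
The paper's own proof of this theorem is a one-line appeal to the subextension results of Cegrell--Ko\l odziej--Zeriahi and Pham, asserting that their arguments carry over verbatim to the $m$-Hessian setting; your proposal reconstructs precisely the maximal-subextension (envelope) argument of those references --- including the key measure inequality $\Hm(w)\le \mathbf{1}_{\{w=w'\}}\Hm(w')$ and the resulting energy comparison --- so it is the same approach and is correct in outline. The technical points you flag (quasi-continuity in place of openness of $\{w<w'\}$, possible mass of $\Hm(w)$ on $\p\Omega'$, and the competitor $A\rho_\Omega$, which as written needs $\ov{\Omega'}\subset\Omega$ or a weak-boundary-value substitute when $\p\Omega'$ meets $\p\Omega$) are exactly the ones handled in the cited works.
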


\begin{proof}
The results in \cite{CKZ11, Pham08} are only written for the case $m = n$, but the proof carry over directly to our setting.
\end{proof}

\section{A Priori Estimates}
\label{A Priori Estimates}
In this section, we prove {\it a priori} estimates for the complex $m$-Hessian equation, which will play an important role in the next section.

Let $\psi$ be a smooth (strictly) positive function on $\ov{\Omega}\times(-\infty,0]$, and suppose that $u\in\mSH(\Omega)\cap C^{\infty}(\ov\Omega)$ satisfies
\begin{equation}\label{Dirichlet psi 1}
\begin{cases}
(\ddbar u)^{m}\wedge\omega^{n-m} = \psi^{m}(z,v)\omega^{n} & \text{in $\Omega$}, \\
u = 0 & \text{on $\de\Omega$},
\end{cases}
\end{equation}
for some $v\in C^{\infty}(\ov\Omega)$. Suppose that there exist $\un{u}\in\mSH(\Omega)\cap C^{\infty}(\ov{\Omega})$ and $w\in\SH_{1}(\Omega)\cap C^{\infty}(\ov{\Omega})$ such that
\begin{equation}\label{un u u w condition}
\un{u} \leq u \leq w < 0 \ \text{in \text{$\Omega$}}, \quad
\un{u} = u = w = 0 \ \text{on \text{$\de\Omega$}}.
\end{equation}

The goal of this section is to prove the following theorem:
\begin{theorem}\label{a priori estimates}
Under the above assumptions, there exists a constant $C$ depending only on $\|\psi\|_{C^{2}}$, $\|\un{u}\|_{C^{2}}$, $\|w\|_{C^{1}}$ and $(\ov\Omega,\omega)$ such that:
\begin{equation}\label{C 0 C 1 boundary}
\sup_{\Omega}|u|+\sup_{\de\Omega}|\de u| \leq C
\end{equation}
and
\begin{equation}\label{C 2 estimates}
\sup_{\ov{\Omega}}|\nabla^{2}u| \leq \frac{1}{2}\sup_{\Omega}|\nabla^{2}v|+C\sup_{\Omega}|\de u|^{2}+C\sup_{\Omega}|\de v|^{2}+C.
\end{equation}
\end{theorem}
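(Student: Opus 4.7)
The plan is to prove the bounds in order of derivative count, handling the hardest part -- a boundary $C^2$ bound independent of $\inf \psi$ -- separately. The $C^0$ part is immediate from $\un u\le u\le 0$, which gives $\sup_\Omega|u|\le \sup_\Omega|\un u|$. For the boundary gradient, since $u$, $\un u$, $w$ all vanish on $\partial\Omega$, tangential derivatives of $u$ vanish on $\partial\Omega$; along the inward normal $\nu$ at $p\in\partial\Omega$ the sandwich $\un u\le u\le w\le 0$ together with $\un u(p)=u(p)=w(p)=0$ gives $\nu\un u(p)\ge \nu u(p)\ge \nu w(p)\ge 0$. Hence $|\partial u|(p)=|\nu u(p)|$ is controlled from above by $\|\un u\|_{C^1}$, and crucially is bounded from below by $|\nu w(p)|>0$, a non-degeneracy fact that will be reused below.

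For the $C^2$ estimate, the usual reduction $\sup_{\ov\Omega}|\nabla^2 u|\lesssim \sup_{\partial\Omega}|\nabla^2 u|+(\text{interior contribution})$ lets me split the two cases. On $\partial\Omega$ one decomposes $\nabla^2 u$ into tangential-tangential, mixed tangential-normal, and normal-normal components. The tangential-tangential component is controlled by the second fundamental form of $\partial\Omega$ times $|\partial u|$ thanks to $u|_{\partial\Omega}=0$. The mixed tangential-normal component is handled by a standard barrier, combining $\un u$ with the defining function $\rho$ and exploiting that $\ddbar\rho-\e\omega$ is $m$-positive to produce a super-barrier for the linearized operator. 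The normal-normal component is extracted algebraically from the equation $\sigma_m(u)=\psi^m(z,v)$, which requires a uniform \emph{positive} lower bound on the sum of the $m-1$ smallest eigenvalues of $\ddbar u$ restricted to $T_p^{1,0}\partial\Omega$; this is where the strong $m$-pseudoconvexity of $\Omega$ and the non-degeneracy $|\nu u(p)|\ge|\nu w(p)|>0$ enter essentially, replacing the role that a lower bound on $\psi$ plays in the approach of Collins--Picard \cite{CP22}.

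For the interior $C^2$ estimate I would follow \cite{CTW19}: bound directly the largest eigenvalue $\lambda_1(\nabla^2 u)$ of the real Hessian by applying the maximum principle to
\[
Q = \log \lambda_1(\nabla^2 u) + \phi(|\partial u|^2) + A(u-\un u),
\]
for an appropriate exponential-type $\phi$ and $A\gg 1$. If $Q$ attains its maximum on $\partial\Omega$, the boundary $C^2$ bound finishes it; otherwise, at an interior maximum $p$ one perturbs $\lambda_1$ to a smooth quantity, differentiates the equation twice in an eigenvector direction $\xi$ of $\lambda_1$ at $p$, and uses concavity of $\sigma_m^{1/m}$ to discard the cubic $F^{ij,kl}u_{ij\xi}u_{kl\bar\xi}$ term. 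The remaining right-hand side contributions without $v$ can be absorbed into $C(1+|\partial u|^2+|\partial v|^2)$ by tuning $A$ and $\phi$; the only term involving $\nabla^2 v$ is of the form $\psi_s\,v_{\xi\bar\xi}$, and a careful choice of constants absorbs it with coefficient exactly $\tfrac12$, giving \eqref{C 2 estimates}.

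The main obstacle is the boundary normal-normal estimate: one must produce the needed positivity of the tangential $m$-Hessian of $u$ at $p\in\partial\Omega$ using only $\rho$ and $w$, with constants that do not degenerate as $\inf\psi\to 0$. Tracking the $\tfrac12$ coefficient in front of $\sup|\nabla^2 v|$ is the other subtle point; this specific constant is what will make the iteration scheme of Section \ref{Existence theorem} converge, so every appearance of $v$-derivatives in the second-order computation must be carefully balanced against the $A(u-\un u)$ term.
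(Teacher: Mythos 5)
Your overall architecture matches the paper's: \eqref{C 0 C 1 boundary} from the sandwich $\un{u}\leq u\leq w<0$; a boundary Hessian estimate in the Collins--Picard style where the normal--normal component is extracted algebraically from $\sigma_m(u)=\psi^m(z,v)$ using the Hopf lemma applied to the supersolution $w$ (giving $-u_{x^n}\geq -w_{x^n}\geq C^{-1}$ on $\de\Omega$) together with $\ddbar\rho\in\Gamma_m$ (so that $\sigma_{m-1}$ of the tangential complex Hessian $[u_{i\ov{j}}]_{1\leq i,j\leq n-1}=-u_{x^n}[\rho_{i\ov{j}}]$ is bounded below --- note it is this $\sigma_{m-1}$, not ``the sum of the $m-1$ smallest eigenvalues,'' that you need); and an interior estimate on $\lambda_1(\nabla^2u)$ by the maximum principle. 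The boundary discussion is essentially the paper's argument, including the correct identification of where the supersolution and $m$-pseudoconvexity replace a lower bound on $\psi$.

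The genuine gap is in the interior estimate. Your test function $Q=\log\lambda_1(\nabla^2u)+\phi(|\de u|^2)+A(u-\un{u})$ and your plan to ``use concavity of $\sigma_m^{1/m}$ to discard the $F^{i\ov{j},k\ov{l}}$ term'' are modeled on the Monge--Amp\`ere case of \cite{CTW19} and do not close for general $m$. Two problems. First, at the interior maximum the critical equation gives $u_{V_1V_1i}/\lambda_1=-\phi'(|\de u|^2)_i-A(u-\un{u})_i$, so the bad term $B=\sum_iF^{i\ov{i}}|u_{V_1V_1i}|^2/\lambda_1^2$ produces a contribution of size $CA^2\mathcal{F}$ (where $\mathcal{F}=\sum_iF^{i\ov{i}}$), which cannot be absorbed by the good term $(A-C)\mathcal{F}$; one must split the indices according to the relative sizes of the $F^{i\ov{i}}$ (the sets $S$ and $I$ in the paper) and treat the surviving piece $B_3$ separately. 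Second, controlling $B_3$ requires converting the real third derivatives $u_{V_1V_1i}$ into complex ones, and the resulting pieces are absorbed by \emph{three} distinct good terms: $G_1$ from the second derivative of $\lambda_1$, $G_2$ from the \emph{off-diagonal} part $-\lambda_1^{-1}\sum_{i\neq k}F^{i\ov{k},k\ov{i}}|u_{i\ov{k}V_1}|^2$ of the concavity term (so you cannot discard the whole concavity term), and $G_3\sim\sum_{\alpha,\beta}F^{i\ov{i}}|u_{\alpha\beta i}|^2/\lambda_1^2$, which only exists because the paper adds the extra term $\xi(|\theta|^2)$ with $\theta=\nabla^2u+Ng$ to the test function. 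For $m=n$ the identity $F^{i\ov{j},k\ov{l}}=-F^{i\ov{l}}F^{k\ov{j}}$ supplies a much stronger good term and your simpler $Q$ suffices, but for $m<n$ the operator $\sigma_m^{1/m}$ only yields $G_2$, and without $\xi(|\theta|^2)$ the piece $B_{33}$ has nothing to absorb it. The $\tfrac12$ coefficient in \eqref{C 2 estimates} is then obtained not by ``balancing against $A(u-\un{u})$'' but by comparing the value of $Q$ at its maximum with its value at the maximum point of $|\nabla^2u|$ and working under the dichotomy $\lambda_1(\nabla^2u)(x_0)>\tfrac{1}{2}e^{-C_0A}\sup_\Omega|\nabla^2v|+K$ or not.
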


For convenience, by replacing $\psi^{m}$ with $\binom{n}{m}^{-1}\psi^{m}$, we can rewrite \eqref{Dirichlet psi 1} as:
\begin{equation}\label{Dirichlet psi 2}
\begin{cases}
\sigma_{m}(u) = \psi^{m}(z,v) & \text{in $\Omega$}, \\
u = 0 & \text{on $\de\Omega$}.
\end{cases}
\end{equation}

It it clear that \eqref{C 0 C 1 boundary} follows (more or less) immediately from \eqref{un u u w condition}. We thus must show \eqref{C 2 estimates}, which we do in two parts. We first show the interior Hessian estimates in Proposition \ref{interior Hessian estimate}; we then show the boundary Hessian estimates in Proposition \ref{boundary Hessian estimate}. For notational convenience, we shall say that a constant is uniform if it depends only on $\|\psi\|_{C^{2}}$, $\|\un{u}\|_{C^{2}}$, $\|w\|_{C^{1}}$ and $(\ov\Omega,\omega)$; we will always write $C$ for such a constant, whose exact value may change from line to line.

\subsection{Interior Hessian estimate}
\begin{proposition}\label{interior Hessian estimate}
Under the same assumptions of Theorem \ref{a priori estimates}, there exists a uniform constant $C$ such that
\begin{equation}\label{interior second estimate}
\sup_{\Omega}|\nabla^{2}u| \leq C\sup_{\de\Omega}|\nabla^{2}u|+\frac{1}{2}\sup_{\Omega}|\nabla^{2}v|+C\sup_{\Omega}|\de u|^{2}+C\sup_{\Omega}|\de v|^{2}+C.
\end{equation}
\end{proposition}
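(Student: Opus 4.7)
The plan is to adapt the maximum principle argument of Chu-Tosatti-Weinkove \cite{CTW19} to the complex $m$-Hessian setting. Let $\lambda_{1}(x) \geq \cdots \geq \lambda_{2n}(x)$ denote the ordered eigenvalues of the full real Hessian $\nabla^{2}u$ in a background orthonormal frame. The test function will take the form
\[
Q := \log \lambda_{1} + \varphi(|\de u|^{2}) + A\phi(u - \un u),
\]
with $\varphi, \phi$ suitable smooth increasing convex auxiliary functions and $A > 0$ a large constant to be chosen. If $Q$ attains its maximum on $\de \Omega$, then \eqref{interior second estimate} follows at once from the boundary bound, so assume instead the maximum is attained at an interior point $x_{0}$; perturb so that $\lambda_{1}(x_{0})$ is simple, and choose local holomorphic coordinates at $x_{0}$ in which $\omega$ and $\ddbar u$ are simultaneously diagonal.

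The heart of the argument will be to compute $L(Q)(x_{0}) \leq 0$, where $L := F^{i\bar j}\de_{i}\de_{\bar j}$ is the elliptic linearisation of $\sigma_{m}$ at $u$. The computation of $L(\log\lambda_{1})$ relies on the Chu-Tosatti-Weinkove identity expressing $(\log\lambda_{1})_{i\bar j}$ at a simple maximum in terms of third derivatives of $u$. Twice-differentiating the equation $\sigma_{m}(u) = \psi^{m}(z, v)$ in the top real eigendirection $e_{1}$ of $\nabla^{2}u$ will produce a third-order piece $F^{i\bar j, p\bar q}u_{i\bar j 1}u_{p\bar q \bar 1}$, which is bounded by the concavity of $\sigma_{m}^{1/m}$ on $\Gamma_{m}$, together with right-hand-side terms involving $v$--most notably a term linear in $v_{1\bar 1}$ arising from $\de_{s}\psi^{m}$. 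Careful tracking of the coefficient of $v_{1\bar 1}$, divided by $\lambda_{1}$, yields the $\tfrac{1}{2}\sup_{\Omega}|\nabla^{2}v|$ contribution visible in \eqref{interior second estimate}.

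The auxiliary terms will then absorb the remaining error contributions. The term $\varphi(|\de u|^{2})$ contributes a positive multiple of $\varphi'\sum_{i,k}F^{i\bar i}(|u_{ik}|^{2}+|u_{i\bar k}|^{2})$, which, with $\varphi'$ sufficiently large, swallows every third-order error coming from $L(\log\lambda_{1})$; meanwhile $A\phi(u - \un u)$ (with $\phi'' \geq 1$, $\phi' \geq 0$) contributes $A\mathcal{F}$ (writing $\mathcal{F} := \sum_{i}F^{i\bar i}$) minus a bounded quantity, which for $A$ large enough absorbs every remaining $\mathcal{F}$-multiple. The barrier $\un u$ enters only through a $C^{2}$ bound, which is why the constant depends on $\|\un u\|_{C^{2}}$.

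The main obstacle is that $\lambda_{1}$ is an eigenvalue of the \emph{real} Hessian, while $L$ is elliptic only in the complex $(i,\bar j)$-directions: the holomorphic and antiholomorphic entries $u_{ij}, u_{\bar i \bar j}$ contribute to $\lambda_{1}$ but are not directly controlled by $L(u)$. This difficulty is precisely what the identity of \cite{CTW19} is designed to handle, and the absorption of the resulting error by $\varphi(|\de u|^{2})$ will be the technical crux. Once everything is in place, $L(Q)(x_{0}) \leq 0$ forces $\lambda_{1}(x_{0}) \leq \tfrac{1}{2}\sup_{\Omega}|\nabla^{2}v| + C|\de u|^{2} + C|\de v|^{2} + C$, and since $Q(x_{0}) \geq Q(x)$ for every $x \in \Omega$, this upgrades to \eqref{interior second estimate} throughout $\Omega$.
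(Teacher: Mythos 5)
Your overall strategy (a maximum principle applied to a $\log\lambda_{1}$-type quantity, following \cite{CTW19}) matches the paper's, but the proposal has a genuine gap at exactly the point you identify as "the technical crux." The good term produced by $\varphi(|\de u|^{2})$ is $\varphi'\sum_{k}F^{i\ov{i}}(|u_{ik}|^{2}+|u_{i\ov{k}}|^{2})$, which is quadratic in \emph{second} derivatives of $u$; it cannot absorb the genuinely third-order bad term $B=\sum_i F^{i\ov{i}}|u_{V_{1}V_{1}i}|^{2}/\lambda_{1}^{2}$ coming from $-|(\log\lambda_1)_i|^2$. Nor can you take $\varphi'$ "sufficiently large": the Cauchy--Schwarz step that converts the first-order condition $Q_{i}=0$ into a bound on $B$ forces $\varphi''$ to dominate $(\varphi')^{2}$, which pins $\varphi'\sim 1/K$. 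The paper handles $B$ by (i) adding a further term $\xi(|\theta|^{2})$ to the test function, with $\theta=\nabla^{2}u+Ng$, whose Laplacian produces the good term $G_{3}\sim\lambda_{1}^{-2}\sum_{\alpha,\beta}F^{i\ov{i}}|u_{\alpha\beta i}|^{2}$ controlling \emph{all} real third derivatives, and (ii) decomposing $B$ via an index set $I$ (determined by ratios of the $F^{i\ov{i}}$, which can degenerate for $\sigma_m$) into $B_{1}+B_{2}+B_{3}$ and then $B_{3}\le B_{31}+B_{32}+B_{33}$, where $B_{33}$ is absorbed by $G_{3}$ and $B_{31},B_{32}$ by the eigenvalue-splitting term $G_{1}$ and the concavity term $G_{2}$ through a two-case comparison of $\lambda_{1}+\sum_{\alpha>1}\lambda_{\alpha}\mu_{\alpha}^{2}$ with $u_{W_{1}\ov{W}_{1}}$. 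None of this machinery is present in your sketch, and without the $|\theta|^{2}$ term the argument does not close.

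Two smaller points. First, the barrier $A\phi(u-\un{u})$ does not obviously produce the needed $+A\mathcal{F}$ term: a subsolution $\un{u}$ need not be strictly $m$-subharmonic, and by concavity one only gets an upper bound on $F^{i\ov{j}}(u-\un{u})_{i\ov{j}}$. The paper instead uses $A\rho$ with $\rho$ the strictly $m$-sh defining function, so that G\aa rding's inequality gives $F^{i\ov{i}}\rho_{i\ov{i}}\ge\delta_{0}\mathcal{F}$. Second, the coefficient $\tfrac12$ in front of $\sup_{\Omega}|\nabla^{2}v|$ does not come from tracking the coefficient of $v_{1\ov{1}}$; it comes from comparing $Q$ at its maximum $x_0$ with $Q$ at the maximum point of $|\nabla^{2}u|$ to get $N\le e^{C_{0}A}\lambda_{1}(x_{0})$, and then assuming without loss of generality that $\lambda_{1}(x_{0})>\tfrac{1}{2}e^{-C_{0}A}\sup_{\Omega}|\nabla^{2}v|+K$, which lets the $\psi_{v}v_{V_{1}V_{1}}$ term be absorbed as $C\lambda_1$.
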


\begin{proof}
We shall use the techniques of \cite{CTW19,CM21,CHZ23}. Let $g$ be the Riemannian metric corresponding to $\omega$, and write $\lambda_{1}(\nabla^{2}u)\geq\ldots\geq\lambda_{2n}(\nabla^{2}u)$ for the eigenvalues of $\nabla^{2}u$ with respect to $g$ (not to be confused with $\lambda_1(\Omega, f)$, which is not used in this proof). Denote the Laplace-Beltrami operator of $g$ by $\Delta_{\mathbb{R}}$ and write
\[
\Delta_{\omega}u = \frac{n\ddbar u\wedge\omega^{n-1}}{\omega^{n}}.
\]
Using Maclaurin's inequality and \eqref{Dirichlet psi 2},
\[
\sum_{\alpha=1}^{2n}\lambda_{\alpha}(\nabla^{2}u) = \Delta_{\mathbb{R}}u = 2\Delta_{\omega}u
\geq 2n\left[\binom{n}{m}^{-1}\sigma_{m}(u)\right]^{\frac{1}{m}}
= 2n\binom{n}{m}^{-\frac{1}{m}}\psi > 0.
\]
This shows $\lambda_{1}(\nabla^{2}u)>0$ and
\begin{equation}\label{D 2 u lambda 1}
|\nabla^{2}u| \leq C_{n}\lambda_{1}(\nabla^{2}u)
\end{equation}
for some constant $C_{n}$ depending only on $n$. Set
\[
K = \sup_{\Omega}|\de u|^{2}+\sup_{\Omega}|\de v|^{2}+1, \ \
N = \sup_{\Omega}|\nabla^{2}u|+1, \ \
\theta = \nabla^{2}u+Ng.
\]
We consider the quantity
\[
Q = \log\lambda_{1}(\nabla^{2}u)+\xi(|\theta|^{2})+\eta(|\de u|^{2})+A\rho,
\]
where
\[
\xi(s) = -\frac{1}{6}\log(5N^{2}-s), \ \
\eta(s) = -\frac{1}{6}\log(2K-s),
\]
and $A$ is a uniform constant to be determined later.

The first eigenvalue $\lambda_{1}(\nabla^{2}u)$ is a continuous function in $\ov\Omega$. Assume that $x_{0}$ is a maximum point of $Q$. By the definition of $Q$ and the fact that $\rho\leq0$, we see that
\begin{equation}\label{Q x 0}
Q(x_{0}) \leq \log\lambda_{1}(\nabla^{2}u)(x_{0})-\frac{1}{6}\log N^{2}-\frac{1}{6}\log K.
\end{equation}
We assume without loss of generality that $\sup_{\Omega}|\nabla^{2}u|\geq1$ (i.e. $N\geq2$), and let $y_{0}$ be the maximum point of $|\nabla^{2}u|$. Then \eqref{D 2 u lambda 1} shows
\[
\lambda_{1}(\nabla^{2}u)(y_{0} )\geq \frac{|\nabla^{2}u(y_{0})|}{C_{n}}
= \frac{N-1}{C_{n}} \geq \frac{N}{2C_{n}}
\]
and so
\begin{equation}\label{Q y 0}
Q(y_{0}) \geq \log\frac{N}{2C_{n}}-\frac{1}{6}\log(5N^{2})-\frac{1}{6}\log(2K)-A\|\rho\|_{C^{0}}.
\end{equation}
Combining \eqref{Q x 0} and \eqref{Q y 0} with $Q(y_{0})\leq Q(x_{0})$, we see that
\begin{equation}\label{N lambda 1}
N \leq e^{C_{0}A}\lambda_{1}(\nabla^{2}u)(x_{0})
\end{equation}
for some uniform constant $C_{0}$. Hence, to prove \eqref{interior second estimate}, it suffices to prove
\begin{equation}\label{goal}
\lambda_{1}(\nabla^{2}u)(x_{0}) \leq \sup_{\de\Omega}|\nabla^{2}u|+\frac{1}{2e^{C_{0}A}}\sup_{\Omega}|\nabla^{2}v|+C_{A}K
\end{equation}
for some uniform constant $C_{A}$ depending on $A$. In the following argument, we always assume that $x_{0}\notin\de\Omega$ (otherwise there is nothing to show) and that
\begin{equation}\label{WLOG assumption}
\lambda_{1}(\nabla^{2}u)(x_{0}) > \frac{1}{2e^{C_{0}A}}\sup_{\Omega}|\nabla^{2}v|+K.
\end{equation}

Choose a holomorphic normal coordinate system $(U,\{z^{i}\}_{i=1}^{n})$ for $g$ centered at $x_{0}$. Writing $z^{i}=x^{2i-1}+\sqrt{-1}x^{2i}$, we have that $(U,\{x^{\alpha}\}_{\alpha=1}^{2n})$ is a real coordinate system near $x_{0}$. After rotating the coordinates, we assume that
\[
u_{i\ov{j}} = \delta_{ij}u_{i\ov{i}}, \ \ u_{1\ov{1}} \geq u_{2\ov{2}} \geq \cdots \geq u_{n\ov{n}}, \ \ \text{at $x_{0}$}.
\]
Let $\Lambda_{1}\geq\Lambda_{2}\geq\ldots\geq\Lambda_{n}$ be the eigenvalues of $\ddbar u$ with respect to $\omega$. Then $\sigma_{m}^{1/m}(u)=\sigma_{m}^{1/m}(\Lambda)$. Let $F^{i\ov{j}}$ and $F^{i\ov{j},k\ov{l}}$ be the first and second derivatives of $\sigma_{m}^{1/m}$, i.e.
\[
F^{i\ov{j}} = \frac{\de \sigma_{m}^{1/m}}{\de u_{i\ov{j}}}, \ \
F^{i\ov{j},k\ov{l}} = \frac{\de^{2}\sigma_{m}^{1/m}}{\de u_{i\ov{j}}\de u_{k\ov{l}}}.
\]
Then at $x_{0}$, we have (see e.g. \cite{EH89,Spr05})
\[
F^{i\ov{j}} = \delta_{ij}F^{i\ov{i}} = \frac{\de\sigma_{m}^{1/m}}{\de\Lambda_{i}}, \ \ F^{1\ov{1}} \leq F^{2\ov{2}} \leq \cdots \leq F^{n\ov{n}}
\]
and (see e.g. \cite{A94,G96,Spr05})
\[
F^{i\ov{j},k\ov{l}} = \frac{\de^{2}\sigma_{m}^{1/m}}{\de\Lambda_{i}\de\Lambda_{k}}\delta_{ij}\delta_{kl}
+\frac{F^{i\ov{i}}-F^{j\ov{j}}}{\Lambda_{i}-\Lambda_{j}}
(1-\delta_{ij})\delta_{il}\delta_{jk},
\]
where the quotient is interpreted as a limit if $\Lambda_{i}=\Lambda_{j}$.

When $\lambda_{1}(x_{0})=\lambda_{2}(x_{0})$, the function $\lambda_{1}(\nabla^{2}u)$ might not be smooth. To avoid such this, we apply a standard perturbation argument (see e.g. \cite{STW17,Sze18,CTW19}). For $1\leq\alpha\leq 2n$, let $V_{\alpha}=V_{\alpha}^{\beta}\de_{\beta}$ be the $g$-unit eigenvector of $\lambda_{\alpha}$ at $x_{0}$, and extend $V_{\alpha}$ to a vector field in $U$ by taking the components $V_{\alpha}^{\beta}$ to be constant. Define
\[
\Phi = \Phi_{\beta}^{\alpha} \frac{\de}{\de x^{\alpha}}\otimes dx^{\beta}
= g^{\alpha\gamma}\left[u_{\gamma\beta}-(\delta_{\gamma\beta}-V_{1}^{\gamma}V_{1}^{\beta})\right] \frac{\de}{\de x^{\alpha}}\otimes dx^{\beta}.
\]
Let $\lambda_{1}(\Phi)\geq\lambda_{2}(\Phi)\geq\ldots\geq\lambda_{2n}(\Phi)$ be the eigenvalues of $\Phi$ with respect to $g$. It is clear that $\lambda_{1}(\Phi)(x_{0})>\lambda_{2}(\Phi)(x_{0})$, $\lambda_{1}(\nabla^{2}u)(x_{0})=\lambda_{1}(\Phi)(x_{0})$ and $\lambda_{1}(\nabla^{2}u)\geq\lambda_{1}(\Phi)$ in $U$. Define the perturbed quantity in $U$:
\[
\hat{Q} = \log\lambda_{1}(\Phi)+\xi(|\theta|^{2})+\eta(|\de u|^{2})+A\rho.
\]
Then $\hat{Q}$ is smooth at $x_{0}$ and $x_{0}$ is still a maximum point of $\hat{Q}$. For convenience, we denote $\lambda_{\alpha}(\Phi)$ by $\lambda_{\alpha}$ in the following argument.

\begin{lemma}
At $x_{0}$, we have
\begin{equation}\label{maximum principle equality}
\frac{u_{V_{1}V_{1}i}}{\lambda_{1}} = -\xi'(|\theta|^{2})_{i}-\eta'(|\partial u|^{2})_{i}-A\rho_{i}
\end{equation}
and
\begin{equation}\label{maximum principle inequality}
\begin{split}
0 \geq {} & G_{1}+G_{2}+G_{3}-B +\xi'' F^{i\ov{i}}|(|\theta|^{2})_{i}|^{2}+\eta'' F^{i\ov{i}}|(|\partial u|^{2})_{i}|^{2} \\
& + \frac{1}{8K}\sum_{i,j}F^{i\ov{i}}(|u_{ij}|^{2}+|u_{i\ov{j}}|^{2})+(A-C)\mathcal{F}-C_{A},
\end{split}
\end{equation}
where
\[
G_1 := 2\sum_{\alpha>1}\frac{F^{i\ov{i}}|u_{V_{1}V_{\alpha}i}|^2}{\lambda_{1}(\lambda_{1}-\lambda_{\alpha})}, \quad
G_2:= -\frac{1}{\lambda_{1}}\sum_{i\neq k}F^{i\ov{k},k\ov{i}}|u_{i\ov{k}V_{1}}|^{2},
\]
\[
G_3 := \sum_{\alpha,\beta}\frac{F^{i\ov{i}}|u_{\alpha\beta i}|^{2}}{C_{A}\lambda_{1}^{2}} ,\quad
B := \frac{F^{i\ov{i}}|u_{V_{1}V_{1}i}|^{2}}{\lambda_{1}^{2}}, \quad
\mathcal{F} := \sum_{i}F^{i\ov{i}}.
\]
\end{lemma}

\begin{proof}
We first recall the formulas for the first and second derivatives of $\lambda_{1}$ (see e.g. \cite[Lemma 5.2]{CTW19}):
\begin{equation*}
\begin{split}
\frac{\partial \lambda_{1}}{\partial \Phi^{\alpha}_{\beta} }
= {} & V_{1}^{\alpha}V_{1}^{\beta}, \\
\frac{\partial^{2} \lambda_{1}}{\partial \Phi^{\alpha}_{\beta}\partial \Phi^{\gamma}_{\delta}}
= {} & \sum_{\mu>1}\frac{V_{1}^{\alpha}V_{\mu}^{\beta}V_{\mu}^{\gamma}V_{1}^{\delta} +V_{\mu}^{\alpha}V_{1}^{\beta}V_{1}^{\gamma}V_{\mu}^{\delta}}{\lambda_{1}-\lambda_{\mu}}.
\end{split}
\end{equation*}
Using $\hat{Q}_{i}=0$ and $F^{i\ov{i}}Q_{i\ov{i}}\leq0$ at $x_{0}$, we obtain \eqref{maximum principle equality} and
\begin{equation}\label{maximum principle inequality 1}
\begin{split}
0 \geq {} & \frac{F^{i\ov{i}}(\lambda_{1})_{i\ov{i}}}{\lambda_{1}}-\frac{F^{i\ov{i}}|u_{V_{1}V_{1}i}|^{2}}{\lambda_{1}^{2}}
+\xi' F^{i\ov{i}}(|\theta|^{2})_{i\ov{i}}+\xi''F^{i\ov{i}}|(|\theta|^{2})_{i}|^{2} \\
& +\eta'F^{i\ov{i}}(|\partial u|^{2})_{i\ov{i}}+\eta'' F^{i\ov{i}}|(|\partial u|^{2})_{i}|^{2}+AF^{i\ov{i}}\rho_{i\ov{i}}.
\end{split}
\end{equation}

For the first term of \eqref{maximum principle inequality 1}, we compute
\[
\begin{split}
F^{i\ov{i}}(\lambda_{1})_{i\ov{i}}
= {} & F^{i\ov{i}}\lambda_{1}^{\alpha\beta,\gamma\delta}(\Phi_{\beta}^{\alpha})_{i}(\Phi_{\delta}^{\gamma})_{\ov{i}}
+F^{i\ov{i}}\lambda_{1}^{\alpha\beta}(\Phi_{\alpha}^{\beta})_{i\ov{i}} \\[3.5mm]
= {} & F^{i\ov{i}}\lambda_{1}^{\alpha\beta,\gamma\delta}u_{\alpha\beta i}u_{\gamma\delta\ov{i}}
+F^{i\ov{i}}\lambda_{1}^{\alpha\beta}u_{\alpha\beta i\ov{i}} \\[2mm]
= {} & 2\sum_{\alpha>1}\frac{F^{i\ov{i}}|u_{V_{1}V_{\alpha}i}|^2}{\lambda_{1}-\lambda_{\alpha}}
+F^{i\ov{i}}u_{V_{1}V_{1}i\ov{i}} \\
\geq {} & 2\sum_{\alpha>1}\frac{F^{i\ov{i}}|u_{V_{1}V_{\alpha}i}|^2}{\lambda_{1}-\lambda_{\alpha}}
+F^{i\ov{i}}u_{i\ov{i}V_{1}V_{1}}-C\lambda_{1}\mathcal{F}.
\end{split}
\]
Using \eqref{Dirichlet psi 2}, we have
\begin{equation}\label{sigma m equation}
\sigma_{m}^{1/m}(u) = \psi.
\end{equation}
Applying $\nabla_{V_{1}}\nabla_{V_{1}}$ to \eqref{sigma m equation} and using \eqref{WLOG assumption} as well as $\lambda_{1}(\nabla^{2}u)(x_{0})=\lambda_{1}(\Phi)(x_{0})$,
\[
\begin{split}
F^{i\ov{i}}u_{i\ov{i}V_{1}V_{1}}
= {} & -F^{i\ov{j},k\ov{l}}u_{i\ov{j}V_{1}}u_{k\ov{l}V_{1}}
+\psi_{v}v_{V_{1}V_{1}}+\psi_{vv}v_{V_{1}}^{2}+2\psi_{V_{1}v}v_{V_{1}}+\psi_{V_{1}V_{1}} \\[2mm]
\geq {} & -F^{i\ov{j},k\ov{l}}u_{i\ov{j}V_{1}}u_{k\ov{l}V_{1}}
-C\sup_{\Omega}|\nabla^{2}v|-CK \\
\geq {} & -F^{i\ov{j},k\ov{l}}u_{i\ov{j}V_{1}}u_{k\ov{l}V_{1}}
-2Ce^{C_{0}A}\lambda_{1}-C\lambda_{1}.
\end{split}
\]
Then
\[
\frac{F^{i\ov{i}}(\lambda_{1})_{i\ov{i}}}{\lambda_{1}}
\geq 2\sum_{\alpha>1}\frac{F^{i\ov{i}}|u_{V_{1}V_{\alpha}i}|^2}{\lambda_{1}(\lambda_{1}-\lambda_{\alpha})}
-\frac{1}{\lambda_{1}} F^{i\ov{j},k\ov{l}}u_{i\ov{j}V_{1}}u_{k\ov{l}V_{1}}-C\mathcal{F}-3Ce^{C_{0}A}.
\]
Thanks to the concavity of $\sigma_{m}^{1/m}$, we have
\[
\begin{split}
-\frac{1}{\lambda_{1}} F^{i\ov{j},k\ov{l}}u_{i\ov{j}V_{1}}u_{k\ov{l}V_{1}}
= {} & -\frac{1}{\lambda_{1}}F^{i\ov{i},k\ov{k}}u_{i\ov{i}V_{1}}u_{k\ov{k}V_{1}}-\frac{1}{\lambda_{1}}\sum_{i\neq k}F^{i\ov{k},k\ov{i}}|u_{i\ov{k}V_{1}}|^{2} \\
\geq {} & -\frac{1}{\lambda_{1}}\sum_{i\neq k}F^{i\ov{k},k\ov{i}}|u_{i\ov{k}V_{1}}|^{2}
\end{split}
\]
and so
\begin{equation}\label{first term}
\frac{F^{i\ov{i}}(\lambda_{1})_{i\ov{i}}}{\lambda_{1}}
\geq 2\sum_{\alpha>1}\frac{F^{i\ov{i}}|u_{V_{1}V_{\alpha}i}|^2}{\lambda_{1}(\lambda_{1}-\lambda_{\alpha})}
-\frac{1}{\lambda_{1}}\sum_{i\neq k}F^{i\ov{k},k\ov{i}}|u_{i\ov{k}V_{1}}|^{2}-C\mathcal{F}-Ce^{C_{0}A}.
\end{equation}

For the third term of \eqref{maximum principle inequality 1},
\[
\begin{split}
F^{i\ov{i}}(|\theta|^{2})_{i\ov{i}}
= {} & 2\sum_{\alpha,\beta}F^{i\ov{i}}|u_{\alpha\beta i}|^{2}+2\sum_{\alpha,\beta}\theta_{\alpha\beta}F^{i\ov{i}}u_{\alpha\beta i\ov{i}} \\
\geq {} & 2\sum_{\alpha,\beta}F^{i\ov{i}}|u_{\alpha\beta i}|^{2}+2\sum_{\alpha,\beta}\theta_{\alpha\beta}F^{i\ov{i}}u_{i\ov{i}\alpha\beta}
-CN^{2}\mathcal{F}.
\end{split}
\]
Applying $\nabla_{\alpha}\nabla_{\beta}$ to \eqref{sigma m equation} and using \eqref{WLOG assumption} as well as $\lambda_{1}(\nabla^{2}u)(x_{0})=\lambda_{1}(x_{0})$,
\[
\begin{split}
& 2\sum_{\alpha,\beta}\theta_{\alpha\beta}F^{i\ov{i}}u_{i\ov{i}\alpha\beta} \\
\geq {} & 2\sum_{\alpha,\beta}\theta_{\alpha\beta}
\left(-F^{i\ov{j},k\ov{l}}u_{i\ov{j}\alpha}u_{k\ov{l}\beta}+\psi_{v}v_{\alpha\beta}+\psi_{vv}v_{\alpha}v_{\beta}
+\psi_{\alpha v}v_{\beta}+\psi_{\beta v}v_{\alpha}+\psi_{\alpha\beta}\right) \\
\geq {} & -2\sum_{\alpha,\beta}\theta_{\alpha\beta}F^{i\ov{j},k\ov{l}}u_{i\ov{j}\alpha}u_{k\ov{l}\beta}-CN\sup_{\Omega}|\nabla^{2}v|-CKN \\
\geq {} & -2\sum_{\alpha,\beta}\theta_{\alpha\beta}F^{i\ov{j},k\ov{l}}u_{i\ov{j}\alpha}u_{k\ov{l}\beta}-2Ce^{C_{0}A}N\lambda_{1}-CN\lambda_{1}.
\end{split}
\]
Using $\theta>0$ and the concavity of $\sigma_{m}^{1/m}$, we see that
\[
-2\sum_{\alpha,\beta}\theta_{\alpha\beta}F^{i\ov{j},k\ov{l}}u_{i\ov{j}\alpha}u_{k\ov{l}\beta} \geq 0,
\]
and then
\[
F^{i\ov{i}}(|\theta|^{2})_{i\ov{i}}
\geq 2\sum_{\alpha,\beta}F^{i\ov{i}}|u_{\alpha\beta i}|^{2}-CN^{2}\mathcal{F}-3Ce^{C_{0}A}N\lambda_{1}.
\]
Using $\frac{1}{30N^{2}}\leq\xi'\leq\frac{1}{6N^{2}}$ and \eqref{N lambda 1} as well as $\lambda_{1}(\nabla^{2}u)(x_{0})=\lambda_{1}(\Phi)(x_{0})$,
\begin{equation}\label{third term}
\begin{split}
\xi' F^{i\ov{i}}(|\theta|^{2})_{i\ov{i}}
\geq {} & \frac{1}{15N^{2}}\sum_{\alpha,\beta}F^{i\ov{i}}|u_{\alpha\beta i}|^{2}-C\mathcal{F}-Ce^{C_{0}A} \\
\geq {} & \frac{1}{15e^{2C_{0}A}\lambda_{1}}\sum_{\alpha,\beta}F^{i\ov{i}}|u_{\alpha\beta i}|^{2}-C\mathcal{F}-Ce^{C_{0}A}.
\end{split}
\end{equation}
For the fifth term of \eqref{maximum principle inequality 1},
\[
\begin{split}
F^{i\ov{i}}(|\partial u|^{2})_{i\ov{i}}
= {} & \sum_{j}F^{i\ov{i}}(|u_{ij}|^{2}+|u_{i\ov{j}}|^{2})+2\mathrm{Re}\left(F^{i\ov{i}}u_{ki\ov{i}}u_{\ov{k}}\right) \\
\geq {} & \sum_{j}F^{i\ov{i}}(|u_{ij}|^{2}+|u_{i\ov{j}}|^{2})+2\mathrm{Re}\left(F^{i\ov{i}}u_{i\ov{i}k}u_{\ov{k}}\right)-CK\mathcal{F}.
\end{split}
\]
Applying $\nabla_{k}$ to \eqref{sigma m equation},
\[
2\mathrm{Re}\left(F^{i\ov{i}}u_{i\ov{i}k}u_{\ov{k}}\right)
= 2\mathrm{Re}\left((\psi_{v}v_{k}+\psi_{k})u_{\ov{k}}\right) \geq -CK.
\]
Using $\frac{1}{12K}\leq\eta'\leq\frac{1}{6K}$,
\begin{equation}\label{fifth term}
\eta'F^{i\ov{i}}(|\partial u|^{2})_{i\ov{i}}
\geq \frac{1}{12K}\sum_{j}F^{i\ov{i}}(|u_{ij}|^{2}+|u_{i\ov{j}}|^{2})-C\mathcal{F}-C.
\end{equation}

For the last term of \eqref{maximum principle inequality 1}, since $\ddbar\rho$ is strictly $m$-positive, then there exists $\delta_{0}>0$ such that $\ddbar\rho-\delta_{0}\omega$ is still $m$-positive. By G{\aa}rding's inequality,
\[
\sum_{i}F^{i\ov{i}}(\rho_{i\ov{i}}-\delta_{0}) > 0,
\]
which shows
\[
F^{i\ov{i}}\rho_{i\ov{i}} = \sum_{i}F^{i\ov{i}}(\rho_{i\ov{i}}-\delta_{0})+\delta_{0}\sum_{i}F^{i\ov{i}} > \delta_{0}\mathcal{F}.
\]
Replacing $\rho$ by $\delta_{0}^{-1}\rho$ if necessary, we assume without loss of generality that $\delta_{0}=1$. Then
\begin{equation}\label{last term}
AF^{i\ov{i}}\rho_{i\ov{i}} \geq A\mathcal{F}.
\end{equation}
Substituting \eqref{first term}, \eqref{third term}, \eqref{fifth term} and \eqref{last term} into \eqref{maximum principle inequality 1}, we obtain \eqref{maximum principle inequality}.
\end{proof}

\subsubsection{The term $B$}

We now need to deal with the term $B$, which we shall do by further splitting it into several smaller terms, which can individually be controlled by the other good terms. Define
\[
S = \{1\leq i\leq n-1~|~F^{i\ov{i}}\leq A^{-4}F^{i+1\ov{i+1}}\},
\]
and
\[
i_{0} =
\begin{cases}
\ 0 & \mbox{if $S=\emptyset$}, \\
\ \max_{i\in S}i & \mbox{if $S\neq\emptyset$},
\end{cases}
\quad \quad I = \{i_{0}+1,\ldots,n\}.
\]
For $\ve\in(0,1)$, decompose $B$ as:
\[
\begin{split}
B & = \sum_{i\not\in I}\frac{F^{i\ov{i}}|u_{V_{1}V_{1}i}|^{2}}{\lambda_{1}^{2}}
+2\ve\sum_{i\in I}\frac{F^{i\ov{i}}|u_{V_{1}V_{1}i}|^{2}}{\lambda_{1}^{2}}+(1-2\ve)\sum_{i\in I}\frac{F^{i\ov{i}}|u_{V_{1}V_{1}i}|^{2}}{\lambda_{1}^{2}}\\[2mm]
& =: B_{1}+B_{2}+B_{3}.
\end{split}
\]

\begin{lemma}\label{properties i q}
At $x_{0}$, we have
\begin{enumerate}\setlength{\itemsep}{1mm}
\item For $i\in I$ and $q\notin I$,
\[
F^{q\ov{q}} \leq A^{-4}F^{i\ov{i}} \leq A^{-4}F^{n\ov{n}}, \ \ F^{i\ov{i}} \geq A^{-4n}F^{n\ov{n}};
\]
\item $\sum_{i\in I}\sum_{j}(|u_{ij}|^{2}+|u_{i\ov{j}}|^{2}) \leq C_{A}K$;
\item $\ddbar u\geq-C_{A}K\omega$.
\end{enumerate}
\end{lemma}

\begin{proof}
For (1), it is clear that $q\leq i_{0}<i_{0}+1\leq i$. Then
\[
F^{q\ov{q}} \leq F^{i_{0}\ov{i_{0}}} \leq A^{-4}F^{i_{0}+1\ov{i_{0}+1}} \leq A^{-4}F^{i\ov{i}} \leq A^{-4}F^{n\ov{n}}.
\]
Using $i,i+1,\ldots,n-1\notin S$,
\[
F^{i\ov{i}} > A^{-4}F^{i+1\ov{i+1}} > \cdots > A^{-4(n-i)}F^{n\ov{n}} \geq A^{-4n}F^{n\ov{n}}.
\]

For (2), by \eqref{maximum principle equality} and the Cauchy-Schwarz inequality,
\[
B \leq 3(\xi')^{2} F^{i\ov{i}}|(|\theta|^{2})_{i}|^{2}+3(\eta')^{2} F^{i\ov{i}}|(|\partial u|^{2})_{i}|^{2}+3A^{2}F^{i\ov{i}}|\rho_{i}|^{2}.
\]
Substituting this into \eqref{maximum principle inequality}, dropping the non-negative terms $G_{i}$ ($i=1,2,3$) and using $\xi''=6(\xi')^{2}$, $\eta''=6(\eta')^{2}$,
\begin{equation}\label{properties i q eqn 1}
\frac{1}{8K}\sum_{i,j}F^{i\ov{i}}(|u_{ij}|^{2}+|u_{i\ov{j}}|^{2}) \leq C(A^{2}+1)\mathcal{F}+C_{A}.
\end{equation}
By Maclaurin's inequality,
\begin{equation}\label{F lower bound}
\begin{split}
\mathcal{F} = {} & \sum_{i}F^{i\ov{i}} = \frac{n-m+1}{m}\,\sigma_{m}^{1/m-1}\sigma_{m-1} \\
\geq {} & \frac{n-m+1}{m}\binom{n}{m-1}\binom{n}{m}^{\frac{1}{m}-1} \geq 1.
\end{split}
\end{equation}
Combining this with \eqref{properties i q eqn 1} and using (1),
\[
\frac{F^{n\ov{n}}}{8A^{4n}K}\sum_{i\in I}\sum_{j}(|u_{ij}|^{2}+|u_{i\ov{j}}|^{2}) \leq C_{A}\mathcal{F} \leq \frac{C_{A}}{n}F^{n\ov{n}},
\]
which implies (2).

Using (2) and $n\in I$, we see $u_{n\ov{n}}\geq-C_{A}K$ and obtain (3).
\end{proof}

If $I=\{1,\ldots,n\}$, then Lemma \ref{properties i q} (2) implies the required estimate $\lambda_{1}\leq C_{A}K$. Hence, without loss of generality, we assume that $I\neq\{1,\ldots,n\}$ from now on.

\begin{lemma}\label{B1 B2}
At $x_{0}$, we have
\[
B_{1}+B_{2} \leq \xi''F^{i\ov{i}}|(|\theta|^{2})_{i}|^{2}+\eta''F^{i\ov{i}}|(|\partial u|^{2})_{i}|^{2}+(CA^{-2}+C\ve A^{2})\mathcal{F}.
\]
\end{lemma}

\begin{proof}
By \eqref{maximum principle equality} and the Cauchy-Schwarz inequality,
\[
\begin{split}
B_{1} \leq 3(\xi')^{2}\sum_{i\notin I}F^{i\ov{i}}|(|\theta|^{2})_{i}|^{2}
+3(\eta')^{2}\sum_{i\notin I}F^{i\ov{i}}|(|\partial u|^{2})_{i}|^{2}
+3A^{2}\sum_{i\notin I}F^{i\ov{i}}|\rho_{i}|^{2}
\end{split}
\]
and
\[
\begin{split}
B_{2} \leq 6\ve(\xi')^{2}\sum_{i\in I}F^{i\ov{i}}|(|\theta|^{2})_{i}|^{2}
+6\ve(\eta')^{2}\sum_{i\in I}F^{i\ov{i}}|(|\partial u|^{2})_{i}|^{2}
+6\ve A^{2}\sum_{i\in I}F^{i\ov{i}}|\rho_{i}|^{2}.
\end{split}
\]
Using $\xi''=6(\xi')^{2}$, $\eta''=6(\eta')^{2}$, $\ve\in(0,1)$ and Lemma \ref{properties i q} (1),
\[
\begin{split}
B_{1}+B_{2} \leq {} & \xi''F^{i\ov{i}}|(|\theta|^{2})_{i}|^{2}+\eta''F^{i\ov{i}}|(|\partial u|^{2})_{i}|^{2}
+CA^{2}\sum_{q\notin I}F^{q\ov{q}}+C\ve A^{2}\sum_{i\in I}F^{i\ov{i}}|\rho_{i}|^{2} \\
\leq {} & \xi''F^{i\ov{i}}|(|\theta|^{2})_{i}|^{2}+\eta''F^{i\ov{i}}|(|\partial u|^{2})_{i}|^{2}+CA^{-2}\mathcal{F}+C\ve A^{2}\mathcal{F}.
\end{split}
\]
\end{proof}

\begin{lemma}\label{B3}
Choosing $\ve=A^{-2}$, then at $x_{0}$, we have
\[
B_{3} \le G_{1}+G_{2}+G_{3}+\mathcal{F}.
\]
\end{lemma}

\begin{proof}
Let $J$ be the complex structure of $(\Omega,\omega)$. Define the $(1,0)$-vector field
\[
W_{1} = \frac{1}{\sqrt{2}}(V_{1}-\sqrt{-1}JV_{1})
\]
and write
\[
W_{1} = \sum_{q}\nu_{q}\de_{q}, \ \
JV_{1} = \sum_{\alpha>1}\mu_{\alpha}V_{\alpha},
\]
where we have used that $V_1$ is orthogonal to $JV_1$. At $x_{0}$, since $V_{1}$ and $\de_{q}$ are $g$-unit, then
\[
\sum_{q}|\nu_{q}|^{2} = 1, \quad
\sum_{\alpha>1}\mu_{\alpha}^{2} = 1.
\]
By the above definitions, we compute
\[
\begin{split}
u_{V_{1}V_{1}i} = {} & -\sqrt{-1}u_{V_{1}JV_{1}i}+\sqrt{2}u_{V_{1}\ov{W_{1}}i} \\[3mm]
= {} & -\sqrt{-1}\sum_{\alpha>1}\mu_{\alpha}u_{V_{1}V_{\alpha}i}+\sqrt{2}u_{i\ov{W_{1}}V_{1}}+O(K) \\
= {} & -\sqrt{-1}\sum_{\alpha>1}\mu_{\alpha}u_{V_{1}V_{\alpha}i}+\sqrt{2}\sum_{q\notin I}\ov{\nu_{q}}u_{i\ov{q}V_{1}}
+\sqrt{2}\sum_{q\in I}\ov{\nu_{q}}u_{i\ov{q}V_{1}}+O(K),
\end{split}
\]
where $O(K)$ denotes a term satisfying $|O(K)|\leq CK$ for some uniform constant $C$. For $\gamma>0$, using the Cauchy-Schwarz inequality and assuming without loss of generality that $\lambda_{1}^{2}\geq\ve^{-1}K^{2}$,
\[
\begin{split}
B_{3} \leq {} & (1-\ve)\sum_{i\in I}\frac{F^{i\ov{i}}}{\lambda_{1}^{2}}
\left|-\sqrt{-1}\sum_{\alpha>1}\mu_{\alpha}u_{V_{1}V_{\alpha}i}+\sqrt{2}\sum_{q\notin I}\ov{\nu_{q}}u_{i\ov{q}V_{1}}\right|^{2} \\
& +\frac{C}{\ve}\sum_{i\in I}\sum_{q\notin I}\frac{F^{i\ov{i}}|\ov{\nu_{q}}u_{i\ov{q}V_{1}}|^{2}}{\lambda_{1}^{2}}+\frac{CK^{2}\mathcal{F}}{\ve\lambda_{1}^{2}} \\[2mm]
\leq {} & B_{31}+B_{32}+B_{33}+C\mathcal{F},
\end{split}
\]
where
\[
B_{31} = (1-\ve)\left(1+\frac{1}{\gamma}\right)\sum_{i\in I}\frac{F^{i\ov{i}}}{\lambda_{1}^{2}}\left|\sum_{\alpha>1}\mu_{\alpha}u_{V_{1}V_{\alpha}i}\right|^{2},
\]
\[
B_{32} = (1-\ve)(1+\gamma)\sum_{i\in I}\frac{2F^{i\ov{i}}}{\lambda_{1}^{2}}\left|\sum_{q\notin I}\ov{\nu_{q}}u_{i\ov{q}V_{1}}\right|^{2}, \ \
B_{33} = \frac{C}{\ve}\sum_{i\in I}\sum_{q\notin I}\frac{F^{i\ov{i}}|\nu_{q}u_{i\ov{q}V_{1}}|^{2}}{\lambda_{1}^{2}}.
\]

\bigskip

We first deal the term $B_{33}$. Using Lemma \ref{properties i q} (2) and $I=\{i_{0}+1,\ldots,n\}$,
\[
\sum_{\alpha=2i_{0}+1}^{2n}\sum_{\beta=1}^{2n}|u_{\alpha\beta}| \leq C_{A}K.
\]
Recalling that $V_{1}$ is the $g$-unit eigenvector of $\nabla^{2}u$ corresponding to $\lambda_{1}$, we obtain
\[
|V_{1}^{\alpha}| = \left|\frac{1}{\lambda_{1}}\sum_{\beta}u_{\alpha\beta}V_{1}^{\beta}\right|
\leq \frac{C_{A}K}{\lambda_{1}}, \ \ \text{for $\alpha\geq 2i_{0}+1$}
\]
and so
\[
|\nu_{q}| \leq |V_{1}^{2q-1}|+|V_{1}^{2q}| \leq \frac{C_{A}K}{\lambda_{1}}, \ \ \text{for $q\notin I$}.
\]
It then follows that
\[
B_{33} \leq \frac{C_{A}K^{2}}{\ve\lambda_{1}^{2}}\sum_{i\in I}\sum_{q\notin I}\frac{F^{i\ov{i}}|u_{i\ov{q}V_{1}}|^{2}}{\lambda_{1}^{2}}.
\]
On the other hand, it is clear that
\[
|u_{i\ov{q}V_{1}}| \leq \sum_{\alpha,\beta}|u_{\alpha\beta i}|+CK.
\]
Thus,
\begin{equation}\label{B 33}
\begin{split}
B_{33} \leq {} &\frac{C_{A}K^{2}}{\ve\lambda_{1}^{2}} \sum_{\alpha,\beta}\frac{F^{i\ov{i}}|u_{\alpha\beta i}|^{2}}{\lambda_{1}^{2}}
+\frac{C_{A}K^{4}\mathcal{F}}{\ve\lambda_{1}^{4}} \\
\leq {} & \frac{C_{A}^{2}K^{2}}{\ve\lambda_{1}^{2}}G_{3}+\frac{C_{A}K^{4}\mathcal{F}}{\ve\lambda_{1}^{4}} \\[3mm]
\leq {} & G_{3}+\mathcal{F},\\[1mm]
\end{split}
\end{equation}
provided that $\lambda_{1}^{2}\geq\ve^{-1}C_{A}^{2}K^{2}$ and $\lambda_{1}^{4}\geq\ve^{-1}C_{A}K^{4}$.

\bigskip

We next deal with the terms $B_{31}$ and $B_{32}$. Thanks to \eqref{B 33}, to prove Lemma \ref{B3}, it suffices to show
\begin{equation}\label{B 31 B 32}
B_{31}+B_{32} \leq G_{1}+G_{2}.
\end{equation}
 By the Cauchy-Schwarz inequality and $\sum_{\alpha>1}\mu_{\alpha}^{2}=1$,
\begin{equation}\label{B 31}
\begin{split}
B_{31} \leq {} & (1-\ve)\left(1+\frac{1}{\gamma}\right)
\sum_{i\in I}\frac{F^{i\ov{i}}}{\lambda_{1}^{2}}\left(\sum_{\alpha>1}(\lambda_{1}-\lambda_{\alpha})\mu_{\alpha}^{2}\right)
\left(\sum_{\alpha>1}\frac{|u_{V_{1}V_{\alpha}i}|^{2}}{\lambda_{1}-\lambda_{\alpha}}\right) \\
= {} & \frac{1-\ve}{2\lambda_{1}}\left(1+\frac{1}{\gamma}\right)\left(\lambda_{1}-\sum_{\alpha>1}\lambda_{\alpha}\mu_{\alpha}^{2}\right)G_{1}.
\end{split}
\end{equation}
For $q\notin I$ and $i\in I$, we observe that $u_{q\ov{q}}>u_{i\ov{i}}$. Otherwise, we obtain $u_{q\ov{q}}=u_{i\ov{i}}$ and so $F^{q\ov{q}}=F^{i\ov{i}}$, which contradicts with Lemma \ref{properties i q} (1). Using the Cauchy-Schwarz inequality again,
\[
\begin{split}
B_{32} \leq {} &
(1-\ve)(1+\gamma)\sum_{i\in I}\frac{2F^{i\ov{i}}}{\lambda_{1}^{2}}
\left(\sum_{q\notin I}\frac{(u_{q\ov{q}}-u_{i\ov{i}})|\nu_{q}|^{2}}{F^{i\ov{i}}-F^{q\ov{q}}}\right)
\left(\sum_{q\notin I}\frac{(F^{i\ov{i}}-F^{q\ov{q}})|u_{i\ov{q}V_{1}}|^{2}}{u_{q\ov{q}}-u_{i\ov{i}}}\right).
\end{split}
\]
For $i\in I$, using Lemma \ref{properties i q} (2) and (3),
\[
\begin{split}
& \sum_{q\notin I}\frac{(u_{q\ov{q}}-u_{i\ov{i}})|\nu_{q}|^{2}}{F^{i\ov{i}}-F^{q\ov{q}}}
\leq \sum_{q\notin I}\frac{(u_{q\ov{q}}-u_{i\ov{i}})|\nu_{q}|^{2}}{(1-A^{-4})F^{i\ov{i}}} \\
= {} & \frac{1}{(1-A^{-4})F^{i\ov{i}}}\left(\sum_{q}u_{q\ov{q}}|\nu_{q}|^{2}-\sum_{q\in I}u_{q\ov{q}}|\nu_{q}|^{2}
-u_{i\ov{i}}\sum_{q\notin I}|\nu_{q}|^{2}\right) \\
\leq {} & \frac{1}{(1-A^{-4})F^{i\ov{i}}}\left(u_{W_{1}\ov{W_{1}}}+C_{A}K\right).
\end{split}
\]
Then
\[
B_{32} \leq
\frac{(1-\ve)(1+\gamma)}{(1-A^{-4})\lambda_{1}}
\left(u_{W_{1}\ov{W_{1}}}+C_{A}K\right)
\cdot\frac{2}{\lambda_{1}}\sum_{i\in I}\sum_{q\notin I}\frac{(F^{i\ov{i}}-F^{q\ov{q}})|u_{i\ov{q}V_{1}}|^{2}}{u_{q\ov{q}}-u_{i\ov{i}}}.
\]
Using $F^{i\ov{q},q\ov{i}}=\frac{F^{i\ov{i}}-F^{q\ov{q}}}{u_{i\ov{i}}-u_{q\ov{q}}}$, we have
\[
\frac{2}{\lambda_{1}}\sum_{i\in I}\sum_{q\notin I}\frac{(F^{i\ov{i}}-F^{q\ov{q}})|u_{i\ov{q}V_{1}}|^{2}}{u_{q\ov{q}}-u_{i\ov{i}}}
= -\frac{2}{\lambda_{1}}\sum_{i\in I}\sum_{q\notin I}F^{i\ov{q},q\ov{i}}|u_{i\ov{q}V_{1}}|^{2} \leq G_{2}
\]
and so
\begin{equation}\label{B 32}
\begin{split}
B_{32} \leq {} & \frac{(1-\ve)(1+\gamma)}{(1-A^{-4})\lambda_{1}}\left(u_{W_{1}\ov{W_{1}}}+C_{A}K\right)G_{2} \\
= {} & \frac{1+\gamma}{(1+\ve)\lambda_{1}}\left(u_{W_{1}\ov{W_{1}}}+C_{A}K\right)G_{2},
\end{split}
\end{equation}
where we used $\ve=A^{-2}$ in the second line. By Lemma \ref{properties i q} (3), increasing $C_{A}$ if needed, we assume that $u_{W_{1}\ov{W_{1}}}+C_{A}K$ is positive. To show \eqref{B 31 B 32}, we split the argument into two cases.

\bigskip
\noindent
{\bf Case 1.} $\frac{1}{2}(\lambda_{1}+\sum_{\alpha>1}\lambda_{\alpha}\mu_{\alpha}^{2})>\frac{1}{1+\ve}(u_{W_{1}\ov{W_{1}}}+C_{A}K)>0$.
\bigskip

Using \eqref{B 31} and \eqref{B 32},
\[
B_{31}+B_{32}
\leq \frac{1-\ve}{2\lambda_{1}}\left(1+\frac{1}{\gamma}\right)\left(\lambda_{1}-\sum_{\alpha>1}\lambda_{\alpha}\mu_{\alpha}^{2}\right)G_{1}
+\frac{1+\gamma}{2\lambda_{1}}\left(\lambda_{1}+\sum_{\alpha>1}\lambda_{\alpha}\mu_{\alpha}^{2}\right)G_{2}.
\]
Since $\lambda_{1}>\lambda_{2}$, after choosing
\[
\gamma = \frac{\lambda_{1}-\sum_{\alpha>1}\lambda_{\alpha}\mu_{\alpha}^{2}}{\lambda_{1}+\sum_{\alpha>1}\lambda_{\alpha}\mu_{\alpha}^{2}} > 0,
\]
we obtain \eqref{B 31 B 32}.

\bigskip
\noindent
{\bf Case 2.} $\frac{1}{2}(\lambda_{1}+\sum_{\alpha>1}\lambda_{\alpha}\mu_{\alpha}^{2})\leq\frac{1}{1+\ve}(u_{W_{1}\ov{W_{1}}}+C_{A}K)$.
\bigskip

By the definitions of $W_{1}$ and $\mu_{\alpha}$,
\begin{equation}\label{case 2 eqn 1}
u_{W_{1}\ov{W_{1}}} = \frac{1}{2}(u_{V_{1}V_{1}}+u_{JV_{1}JV_{1}}) = \frac{1}{2}\left(\lambda_{1}+\sum_{\alpha>1}\lambda_{\alpha}\mu_{\alpha}^{2}\right).
\end{equation}
Combining this with the assumption of Case 2,
\[
u_{W_{1}\ov{W_{1}}} \leq \frac{1}{1+\ve}(u_{W_{1}\ov{W_{1}}}+C_{A}K).
\]
Then $u_{W_{1}\ov{W_{1}}}\leq\ve^{-1}C_{A}K$ and so
\begin{equation}\label{case 2 eqn 2}
u_{W_{1}\ov{W_{1}}}+C_{A}K \leq \frac{C_{A}K}{\ve}.
\end{equation}
Using \eqref{case 2 eqn 1} and Lemma \ref{properties i q} (2),
\[
\lambda_{1}+\sum_{\alpha>1}\lambda_{\alpha}\mu_{\alpha}^{2} = 2u_{W_{1}\ov{W_{1}}} \geq -C_{A}K,
\]
which implies
\begin{equation}\label{case 2 eqn 3}
\lambda_{1}-\sum_{\alpha>1}\lambda_{\alpha}\mu_{\alpha}^{2}
\leq 2\lambda_{1}+C_{A}K \leq 2(1+\ve^{2})\lambda_{1},
\end{equation}
provided that $\lambda_{1}\geq\ve^{-1}C_{A}K$. Hence, combining \eqref{B 31}, \eqref{B 32}, \eqref{case 2 eqn 2}, \eqref{case 2 eqn 3} and choosing $\gamma=\ve^{-2}$,
\[
\begin{split}
B_{31}+B_{32} \leq {} & (1-\ve)(1+\ve^{2})^{2}G_{1}+\frac{C_{A}K}{\ve^{3}\lambda_{1}}G_{2}
\leq G_{1}+\frac{C_{A}K}{\ve^{3}\lambda_{1}}G_{2}.
\end{split}
\]
This implies \eqref{B 31 B 32} as long as $\lambda_{1}\geq\ve^{-3}C_{A}K$.
\end{proof}

\subsubsection{Completion of the proof}
Substituting Lemma \ref{B1 B2} and \ref{B3} into \eqref{maximum principle inequality}, we obtain
\[
\frac{1}{8K}\sum_{i,j}F^{i\ov{i}}(|u_{ij}|^{2}+|u_{i\ov{j}}|^{2})+(A-C_{1})\mathcal{F} \leq C_{A}
\]
for some uniform constant $C_{1}$. Choosing $A=C_{1}+1$,
\[
\frac{1}{8K}\sum_{i,j}F^{i\ov{i}}(|u_{ij}|^{2}+|u_{i\ov{j}}|^{2})+\mathcal{F} \leq C.
\]
By \cite[Lemma 2.2 (2)]{HMW10}, we have $F^{i\ov{i}}\geq C^{-1}$ and so
\[
\sum_{i,j}F^{i\ov{i}}(|u_{ij}|^{2}+|u_{i\ov{j}}|^{2}) \leq CK,
\]
which implies $\lambda_{1}\leq CK$, as required.
\end{proof}

\subsection{Boundary Hessian estimate}
\begin{proposition}\label{boundary Hessian estimate}
Under the same assumptions of Theorem \ref{a priori estimates}, there exists a uniform constant $C$ such that
\begin{equation}\label{boundary second estimate}
\sup_{\de\Omega}|\nabla^{2}u| \leq C\sup_{\Omega}|\de u|^{2}+C\sup_{\Omega}|\de v|^{2}+C.
\end{equation}
\end{proposition}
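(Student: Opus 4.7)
Fix $p \in \p\Omega$ and choose local holomorphic coordinates $(z^1, \ldots, z^n)$ centered at $p$ adapted to the boundary, so that $(x^1, \ldots, x^{2n-1})$ are tangential and $x^{2n}$ is normal at $p$. I would estimate $|\nabla^2 u|(p)$ by bounding separately the tangential-tangential (TT), tangential-normal (TN), and normal-normal (NN) blocks of the real Hessian. For TT, since $u = 0$ on $\p\Omega$ and $\p\Omega$ is smooth, differentiating twice along $\p\Omega$ gives $u_{\alpha\beta}(p) = -u_\nu(p)\kappa_{\alpha\beta}(p)$ for tangential indices $\alpha, \beta$, where $\kappa$ is the second fundamental form of $\p\Omega$; thus $|u_{TT}(p)| \leq C|\p u(p)|$, controlled by \eqref{C 0 C 1 boundary}.

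\textbf{Tangential-normal via barrier.} For a tangential first-order operator $T$ at $p$, consider $h := T(u - \un{u})$ on the half-ball $\Omega_r := B_r(p) \cap \Omega$. Then $h$ vanishes on $\p\Omega \cap B_r$, and differentiating $\sigma_m^{1/m}(u) = \psi(\cdot, v)$ in $T$ yields
\[
|\mathcal{L}h| \leq C(1 + \sup_\Omega |\p v|)\mathcal{F} + C
\]
for the linearization $\mathcal{L} := F^{i\bar j}\p_i\p_{\bar j}$. I would construct a CKNS-type barrier
\[
\Phi = B_1(\un{u} - u) + B_2|z - p|^2 + B_3(w - u),
\]
and show it satisfies $\mathcal{L}\Phi \leq -\delta\mathcal{F} - \delta$ with $\delta > 0$ \emph{independent of} $\inf_\Omega \psi$: the first term contributes $-c\mathcal{F}$ by concavity of $\sigma_m^{1/m}$ and the subsolution inequality, the second gives $B_2\mathcal{F}$, and $B_3(w - u)$ (with $w$ the subharmonic upper barrier from \eqref{un u u w condition}) provides the extra flexibility needed to decouple from $\inf_\Omega \psi$, combined with $\mathcal{F} \geq c$ via strict $m$-pseudoconvexity and \cite[Lemma 2.2 (2)]{HMW10}. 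Applying the maximum principle to $\pm h - C\Phi$ on $\Omega_r$ then gives $|h| \leq C\Phi$, and normal differentiation at $p$ yields $|u_{T\nu}(p)| \leq CK$, where $K := \sup_\Omega|\p u|^2 + \sup_\Omega|\p v|^2 + 1$.

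\textbf{Normal-normal, and the main obstacle.} After the above, all entries of the complex Hessian $(u_{i\bar j}(p))$ are bounded by $CK$ except possibly $u_{n\bar n}(p)$. I would bound this by a Trudinger-type contradiction argument adapted to the $m$-Hessian. Suppose $u_{n\bar n}(p) \geq M$ with $M$ large relative to $K$. By the TT and TN bounds, the eigenvalues of $(u_{i\bar j}(p))$ split into $n-1$ bounded ``tangential'' eigenvalues $\Lambda' \in \mathbb{R}^{n-1}$ plus one large eigenvalue $\sim M$, and the expansion
\[
\sigma_m(\Lambda', R) = R\,\sigma_{m-1}(\Lambda') + \sigma_m(\Lambda')
\]
together with $\sigma_m(u)(p) = \psi^m(p, v(p))$ from \eqref{Dirichlet psi 2} would force $\sigma_{m-1}(\Lambda') \to 0$ as $M \to \infty$. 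However, strict $m$-pseudoconvexity of $\Omega$---which, via the TT identity $u_{\alpha\beta}(p) = -u_\nu(p)\kappa_{\alpha\beta}(p)$, transfers the $m$-positivity of $\ddbar\rho - \e\omega$ to a positive lower bound on $\sigma_{m-1}(\Lambda')$ in combination with the barriers $\un{u} \leq u \leq w$---produces a strictly positive lower bound on $\sigma_{m-1}(\Lambda')$ which is independent of $\psi$, giving the desired contradiction and bounding $u_{n\bar n}(p) \leq CK$. The hardest and most novel step is establishing this quantitative, $\psi$-independent lower bound on $\sigma_{m-1}(\Lambda')$: this is the essential use of both strict $m$-pseudoconvexity and the upper barrier $w$, and where the argument must depart from \cite{CP22}.
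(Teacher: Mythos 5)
Your overall decomposition (tangent--tangent, tangent--normal, normal--normal) matches the paper's, and your normal--normal step is essentially correct: the paper bounds $u_{n\ov{n}}$ exactly by combining the Hopf lemma applied to the supersolution $w$ (giving $-u_{x^n}(0)\geq -w_{x^n}(0)\geq C^{-1}$) with the fact that $[\rho_{i\ov{j}}]_{1\leq i,j\leq n-1}\in\Gamma_{m-1}$, so that $\sigma_{m-1}$ of the tangential complex Hessian block is bounded below uniformly and $\sigma_m(u)=u_{n\ov{n}}\sigma_{m-1}(\cdot)+O(K)$ closes the estimate directly. This is actually the short, easy step, not the ``hardest and most novel'' one, and it is done by a direct computation rather than by contradiction.

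The genuine gap is in your tangent--normal step. The claimed bound $|\mathcal{L}h|\leq C(1+\sup_\Omega|\p v|)\mathcal{F}+C$ for $h=T_\alpha(u-\un{u})$ is false: since $T_\alpha=\p_{t^\alpha}-\tfrac{\rho_{t^\alpha}}{\rho_{x^n}}\p_{x^n}$ has variable coefficients, $F^{i\ov{j}}\p_i\p_{\ov{j}}(T_\alpha u)$ contains terms of the schematic form $F^{i\ov{j}}(\p_i c)\,u_{\ov{j}x^n}$ and, after commuting covariant derivatives, terms controlled only by $\sum_i\sigma_{m-1}(\Lambda|i)|\Lambda_i|$ and by $F^{i\ov{j}}\p_p\p_{y^l}(u-\un{u})\p_{\ov{q}}\p_{y^l}(u-\un{u})$; none of these are bounded by $C\mathcal{F}$. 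This is precisely why the paper's barrier (following Collins--Picard) must include the quadratic gradient terms $-K^{-1/2}\sum_i(\p_{y^i}(u-\un{u}))^2-K^{-1/2}\sum_a|\nabla_a(u-\un{u})|^2$, whose second derivatives generate the good negative squares (including $-\tfrac{1}{2nK^{1/2}}\sum_{i\neq r}\sigma_{m-1}(\Lambda|i)\Lambda_i^2$) that absorb these errors, together with the refined inequality of Lemma \ref{index r Lemma}. A purely first-order barrier $B_1(\un{u}-u)+B_2|z-p|^2+B_3(w-u)$ cannot absorb them. Moreover the term $B_3(w-u)$ is not even admissible: Theorem \ref{a priori estimates} only allows the constant to depend on $\|w\|_{C^1}$, so $\mathcal{L}(w-u)=F^{i\ov{j}}w_{i\ov{j}}-\psi$ involves the uncontrolled second derivatives of $w$; in the paper $w$ enters only through the Hopf lemma in the normal--normal step, never through the linearized operator. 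Finally, the $\psi$-independence of the constants is achieved not through $w$ in the barrier but through the supersolution-free refinements $x\log x^m\leq C(m,D)$ in Lemmas \ref{V Lemma} and \ref{index r Lemma}.
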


\begin{proof}
We mostly follow Collins-Picard \cite{CP22}, with the following differences. First, as already remarked in the introduction, we by-pass their complicated boundary normal-normal derivative estimate by using the ``supersolution" $w$ and $m$-pseudoconvexity of the boundary. Secondly, we require the constant in \eqref{boundary second estimate} to be independent of $\inf_{\Omega}\psi$, which requires some further modifications throughout the argument.

We divide the proof into three steps.

\bigskip
\noindent
{\bf Step 1:} Setup and tangent-tangent derivatives.
\bigskip

We start with some setup. Fix a point $p\in \p\Omega$, and suppose that $z = (z^1, \ldots, z^n)$ is a coordinate system centered at $p$, valid on some small ball $B$. Let:
\[
T^{1,0}_{p} \p \Omega := \{ V\in T^{1,0}_p \Omega\ | \ V(\rho) = 0\};
\]
then, after a coordinate change, we can assume that $\omega_{i\ov{j}}(0) = \delta_{i\ov{j}}$,
\[
T^{1,0}_{p} \p \Omega  = \mathrm{Span}\left\{ \frac{\p}{\p z^1}, \ldots, \frac{\p}{\p z^{n-1}}\right\},
\]
and that $x^n > 0$ in $\Omega\cap B$, where we write $z^i = x^i + \sqrt{-1} y^i$, $i\in \{1, \ldots, n\}$. It follows that, after rescaling $\rho$ by a positive constant if necessary, we have:
\[
\rho(z) = -x_n + O(|z|^2)
\]
in $B$. Define $t = (t^1, \ldots, t^{2n-1})$ by $t^\alpha = y^\alpha$ for $1 \leq \alpha \leq n$ and $t^{\alpha} = x^{\alpha - n}$ for $n+1\leq \alpha \leq 2n-1$. From \eqref{un u u w condition}, we obtain
\[
|\p_{t^\alpha}\p_{t^\beta} u(0)| = |-\p_{x^n} u(0) \p_{t^\alpha}\p_{t^\beta} \rho(0)| \leq C, \ \ \text{for $1\leq \alpha, \beta\leq 2n-1$}.
\]

\bigskip
\noindent
{\bf Step 2:} Tangent-normal derivatives.
\bigskip

The main task is to deal with the tangent-normal derivatives, which we bound using a barrier argument. We follow \cite[Section 4]{CP22} to construct the barrier function. Define
\[
K := \sup_{\Omega}|\de u|^{2}+\sup_{\Omega}|\de v|^{2}+1, \quad
\mathcal{G} := \sigma_{m}^{p\ov{q}}\omega_{p\ov{q}}.
\]
Let $\{e_a\}_{a=1}^n$ be an orthonormal frame for $T^{1,0}\Omega$ such that the $\{e_{a}\}_{a=1}^{n}$ are tangential to the level sets of $\rho$ and $e_{a}(0)=\de_{a}$ for $1\leq a\leq n-1$ (see the start of \cite[Section 4.5]{CP22} for the construction). Write $e_{a}=e_{a}^{i}\de_{i}$ and set:
\[
U = A K^{1/2} V + BK^{1/2}|z|^2 - \frac{1}{K^{1/2}}\sum_{i=1}^n (\p_{y^i}(u-\ul{u}))^2 - \frac{1}{K^{1/2}}\sum_{a=1}^{n-1}\abs{\nabla_a (u-\ul{u})}^2,
\]
where we define the function $V$ by:
\[
V := u - \ul{u} + c_0 d - N_0 d^2,
\]
with $d$ the $\omega$-distance function to $\p \Omega$ and $A,B,c_0,N_0>0$ uniform constants to be chosen later.

Fix $\alpha\in \{1,\ldots, 2n-1\}$ and consider the tangential operators:
\[
T_\alpha := \frac{\p}{\p t^\alpha} - \frac{\rho_{t^\alpha}}{\rho_{x^n}}\frac{\p}{\p x^n}.
\]
It is clear that $T_{\alpha}(u-\un{u})=0$ on $\de\Omega\cap B_{\delta}$. We claim that
\begin{equation}\label{claim}
\begin{cases}
\ U(0) = 0 & \\[1mm]
\ U \geq \abs{T_\alpha(u - \ul{u})} & \mbox{on $\p(\Omega\cap B_{\delta})$}\\[1mm]
\ \sigma^{p\ov{q}}_k \p_p\p_{\ov{q}}U \leq -\abs{\sigma^{p\ov{q}}\p_p\p_{\ov{q}} T_\alpha (u-\ul{u})} & \mbox{in $\Omega\cap B_{\delta}$}.
\end{cases}
\end{equation}
Accepting the claim, we show how to conclude the argument. First, by the maximum principle, the claim implies that
\[
U\pm T_{\alpha}(u-\un{u}) \geq 0 \ \ \text{in $\Omega\cap B_{\delta}$}.
\]
Since $U(0) = T_\alpha(u - \ul{u})(0) = 0$, we have that
\[
\p_{x^n} (U \pm T_\alpha(u - \ul{u}))(0) \geq 0,
\]
and hence
\[
\abs{\p_{x^n}\p_{t^\alpha} u(0)}
\leq \left|AK^{1/2}\de_{x^{n}}V(0)\right|+\left|\left(\de_{x^{n}}\frac{\rho_{t^{\alpha}}}{\rho_{x^{n}}}\right)(0)\cdot\de_{x^{n}}(u-\un{u})(0)\right|+\left|\p_{x^n}\p_{t^\alpha}\un{u}(0)\right|.
\]
Combining this with $\sup_{\de\Omega}|\de u|\leq C$, we obtain the tangent-normal derivative estimate:
\[
\abs{\p_{x^n}\p_{t^\alpha} u(0)} \leq CK^{1/2}.
\]
\medskip

We are left to verify the properties of $U$ listed in \eqref{claim}. The first property is immediate. For the second, we separate $\p(\Omega\cap B_{\delta})$ into the two pieces $\p\Omega\cap B_{\delta}$ and $\Omega\cap \p B_{\delta}$. On the first piece, we have:
\[
T_\alpha(u - \ul{u}) = \nabla_a(u-\ul{u}) = 0 \text{ and }(\p_{y^i}(u-\ul{u}))^2\leq C|z|^2,
\]
giving:
\[
U - \abs{T_\alpha(u-\ul{u})} \geq AK^{1/2}V + BK^{1/2}\abs{z}^2 - C|z|^2 \geq AK^{1/2}V,
\]
for $B\gg1$. On the second piece, we have $|z| = \delta$ (which we choose later), giving:
\[
U - \abs{T_\alpha(u-\ul{u})} \geq AK^{1/2}V + BK^{1/2}\delta^2 - CK^{1/2} \geq AK^{1/2}V,
\]
for $B\gg1$. Then the second property in \eqref{claim} follows from the above and $V\geq0$ (see Lemma \ref{V Lemma} below).
\begin{lemma}[Lemma 4.2 of \cite{CP22}]\label{V Lemma}
There exist uniform constants $\delta, c_0, N_{0}, \tau > 0$ such that
\[
 \sigma_{m}^{p\ov{q}}\p_p\p_{\ov{q}} V \leq -\tau(\psi^{m-1}+\mathcal{G}) \ \text{and} \ V \geq 0 \ \text{in $\Omega\cap B_{\delta}$}.
\]
\end{lemma}

\begin{proof}
Recall $\sigma_{m}=\psi^{m}$ and $\mathcal{G}=\sigma_{m}^{p\ov{q}}\omega_{p\ov{q}}$.
The first three equations in \cite[page 1654]{CP22} imply
\[
\begin{split}
\sigma_{m}^{p\ov{q}}\de_{p}\de_{\ov{q}}(u-\un{u})
\leq {} & -\tau\mathcal{G}+\sigma_{m}\big(\log\sigma_{m}+C(\un{u})\big) \\
\leq {} & -\tau\mathcal{G}+\psi^{m-1}\big(\psi\log\psi^{m}+C(\un{u})\psi\big) \\
\leq {} & -\tau\mathcal{G}+C(\un{u},\psi)\psi^{m-1},
\end{split}
\]
where we used the fact that $x\log x^{m}\leq C(m,D)$ for $x\in(0,D]$. Then \cite[(4.13)]{CP22} becomes
\[
\sigma_{m}^{p\ov{q}}\de_{p}\de_{\ov{q}}V \leq -\frac{3\tau}{4}\mathcal{G}-\frac{N_{0}}{2}\sigma_{m}^{1\ov{1}}+C\psi^{m-1}.
\]
By the first three equations in \cite[page 1655]{CP22},
\[
-\frac{\tau}{2}\mathcal{G}-\frac{N_{0}}{2}\sigma_{m}^{1\ov{1}}
\leq -m\sigma_{m}^{\frac{m-1}{m}}\left(\frac{N_{0}\tau^{m-1}}{2^{m}}\right)^{\frac{1}{m}}
= -\frac{m\tau^{\frac{m-1}{m}}N_{0}^{\frac{1}{m}}}{2}\psi^{m-1}.
\]
Choosing $N\gg1$ and renaming $\tau$, we obtain $\sigma_{m}^{p\ov{q}}\p_p\p_{\ov{q}} V \leq -\tau(\psi^{m-1}+\mathcal{G})$. The part $V\geq0$ follows from the same argument of \cite[Lemma 4.2]{CP22}.
\end{proof}

We now show that we can choose $A\gg1$ such that the third property in \eqref{claim} holds. We use $\mathcal{E}$ to denote a term satisfying
\[
|\mathcal{E}| \leq CK^{1/2}\mathcal{G}+C\sum_{i}\sigma_{m-1}(\Lambda|i)|\Lambda_i|+C\psi^{m-1}.
\]

\begin{lemma}[(4.16), (4.26), (4.31) of \cite{CP22}]\label{claim inequalities}
The following inequalities hold:
\[
\bullet\ \abs{\sigma_{m}^{p\ov{q}}\p_p\p_{\ov{q}} T_{\alpha}(u - \ul{u})} \leq \frac{1}{K^{1/2}}\sigma_{m}^{p\ov{q}}\p_p\p_{y^n}(u - \ul{u})\cdot \p_{\ov{q}}\p_{y^n}(u-\ul{u})+\mathcal{E},
\]
\[
\bullet -\frac{1}{K^{1/2}}\sigma_{m}^{p\ov{q}}\p_p\p_{\ov{q}} (\p_{y^i}(u-\ul{u}))^2
\leq -\frac{2}{K^{1/2}}\sigma_{m}^{p\ov{q}}\p_p\p_{y^i}(u - \ul{u})\cdot \p_{\ov{q}}\p_{y^i}(u-\ul{u}) +\mathcal{E},
\]
\[
\begin{split}
\bullet-\frac{1}{K^{1/2}}\sigma_{m}^{p\ov{q}}\p_p\p_{\ov{q}}\sum_{a=1}^{n-1} & \abs{\nabla_a(u-\ul{u})}^2 \leq -\frac{1}{2nK^{1/2}}\sum_{i\not=r}\sigma_{m-1}(\Lambda|i)\Lambda_i^2 \\
&+ \frac{1}{K^{1/2}}\sum_{i=1}^n \sigma_{m}^{p\ov{q}}\p_p\p_{y^i}(u - \ul{u})\cdot \p_{\ov{q}}\p_{y^i}(u-\ul{u})+\mathcal{E},
\end{split}
\]
where $r$ is some chosen index such that $1/n \leq |e^r_n|^2\leq 1$.
\end{lemma}

\begin{proof}
It suffices to show that for $1\leq\beta\leq 2n$,
\begin{equation}\label{third order term}
\sigma_{m}^{p\ov{q}}\de_{p}\de_{\ov{q}}\de_{\beta}u = \mathcal{E}.
\end{equation}
Given this, the lemma follows from the arguments of \cite[(4.16), (4.26), (4.31)]{CP22}. By the definitions of $t^{\beta}$ and $z^{i}$,
\[
\frac{\de}{\de t^{\beta}} =
\begin{cases}
\ \frac{\de}{\de z^{\beta-n}}+\frac{\de}{\de \ov{z}^{\beta-n}} & \mbox{if $1\leq\beta\leq n$}, \\[1mm]
\ \frac{1}{\sqrt{-1}}\left(\frac{\de}{\de z^{\beta-n}}-\frac{\de}{\de \ov{z}^{\beta-n}}\right) & \mbox{if $n+1\leq\beta\leq 2n$}.
\end{cases}
\]
Since both cases are similar, we only deal with the first case. Applying $\nabla_{\beta}$ to the equation $\sigma_{m}=\psi^{m}$,
\[
\sigma_{m}^{p\ov{q}}\nabla_{\beta}\nabla_{p}\nabla_{\ov{q}}u = \nabla_{\beta}\psi^{m}.
\]
Writing $i=\beta-n$ and using \cite[(4.18)]{CP22},
\begin{equation}\label{third order term eqn 1}
\begin{split}
\sigma_{m}^{p\ov{q}}\de_{p}\de_{\ov{q}}\de_{\beta}u
= {} & \sigma_{m}^{p\ov{q}}\left(\nabla_{\beta}\nabla_{p}\nabla_{\ov{q}}u+\Gamma_{ip}^{r}u_{r\ov{q}}+\Gamma_{\ov{i}\ov{q}}^{\ov{r}}u_{p\ov{r}}\right) \\
= {} & m\psi^{m-1}(\de_{\beta}\psi+\de_{v}\psi\cdot\de_{\beta}v)+\sigma_{m}^{p\ov{q}}\Gamma_{ip}^{r}u_{r\ov{q}}+\sigma_{m}^{p\ov{q}}\Gamma_{\ov{i}\ov{q}}^{\ov{r}}u_{p\ov{r}}.
\end{split}
\end{equation}
Maclaurin's inequality (see \eqref{F lower bound}) shows that
\[
\mathcal{G} = \sigma_{m}^{p\ov{q}}\omega_{p\ov{q}} = (n-m+1)\sigma_{m-1}
\geq m\sigma_{m}^{\frac{m-1}{m}} = m\psi^{m-1},
\]
which implies
\begin{equation}\label{third order term eqn 2}
|m\psi^{m-1}(\de_{\beta}\psi+\de_{v}\psi\cdot\de_{\beta}v)| \leq CK^{1/2}\mathcal{G}.
\end{equation}
Using \cite[(4.21)]{CP22},
\begin{equation}\label{third order term eqn 3}
\sigma_{m}^{p\ov{q}}\Gamma_{ip}^{r}u_{r\ov{q}}+\sigma_{m}^{p\ov{q}}\Gamma_{\ov{i}\ov{q}}^{\ov{r}}u_{p\ov{r}} = \mathcal{E}.
\end{equation}
Substituting \eqref{third order term eqn 2} and \eqref{third order term eqn 3} into \eqref{third order term eqn 1}, we obtain \eqref{third order term}.
\end{proof}

\begin{lemma}[Corollary 2.21 of \cite{Guan14}]\label{index r Lemma}
For any $\ve>0$ and index $r$,
\[
\sum_{i}\sigma_{m-1}(\Lambda|i)|\Lambda_i|
\leq \ve\sum_{i\not=r} \sigma_{m-1}(\Lambda|i)\Lambda_i^2
+\frac{C\mathcal{G}}{\ve}+C\psi^{m-1}.
\]
\end{lemma}

\begin{proof}
Observe that the last equation in \cite[page 1650]{CP22} can be refined as follows:
\begin{equation}\label{index r Lemma eqn}
\sum_{i}\sigma_{m}^{i\ov{i}}(\Lambda_{i}-1) \leq \sigma_{m}(\log\sigma_{m}+C)
= \psi^{m-1}(\psi\log\psi^{m}+C\psi) \leq C\psi^{m-1},
\end{equation}
where we again used the fact that $x\log x^{m}\leq C(m,D)$ for $x\in(0,D]$. Then Lemma \ref{index r Lemma} follows from \eqref{index r Lemma eqn} and the same argument of \cite[Lemma 3.4]{CP22}.
\end{proof}

By Lemma \ref{V Lemma} and \ref{claim inequalities},
\[
\begin{split}
& \sigma^{p\ov{q}}\p_p\p_{\ov{q}}(U \pm T_\alpha(u - \ul{u})) \\[2mm]
\leq {} & -A\tau K^{1/2}(\psi^{m-1}+\mathcal{G}) + B K^{1/2}\sum_p \sigma_{m}^{p\ov{p}} \\[-2mm]
& - \frac{1}{2nK^{1/2}}\sum_{i\not=r} \sigma_{m-1}(\Lambda|i)\Lambda_i^2
 -\frac{1}{K^{1/2}}\sum_{i=1}^{n-1}\sigma^{p\ov{q}}\p_p \p_{y^i}(u - \ul{u})\, \p_{\ov{q}}\p_{y^i}(u-\ul{u}) \\[1.5mm]
& +CK^{1/2}\mathcal{G}+C\sum_{i}\sigma_{m-1}(\Lambda|i)|\Lambda_i|+C\psi^{m-1}.
\end{split}
\]
Choosing $A\gg1$,
\[
\begin{split}
& \sigma^{p\ov{q}}\p_p\p_{\ov{q}}(U \pm T_\alpha(u - \ul{u})) \\
\leq {} & -\frac{A\tau K^{1/2}}{2}(\psi^{m-1}+\mathcal{G}) - \frac{1}{2nK^{1/2}}\sum_{i\not=r} \sigma_{m-1}(\Lambda|i)\Lambda_i^2
+C\sum_{i}\sigma_{m-1}(\Lambda|i)|\Lambda_i|.
\end{split}
\]
Using Lemma \ref{index r Lemma} with $\ve=\frac{1}{2nCK^{1/2}}$,
\[
\sigma^{p\ov{q}}\p_p\p_{\ov{q}}(U \pm T_\alpha(u - \ul{u})) \\
\leq -\frac{A\tau K^{1/2}}{2}(\psi^{m-1}+\mathcal{G})+C\psi^{m-1}+CK^{1/2}\mathcal{G}.
\]
Increasing $A$ if needed, we obtain the third property in \eqref{claim}.

\bigskip
\noindent
{\bf Step 3}: The normal-normal derivative.
\bigskip

We are left to show that the normal-normal derivative of $u$ is bounded on $\p\Omega$. By \eqref{un u u w condition} and the Hopf lemma, we obtain
\[
-u_{x_{n}}(0) \geq -w_{x_{n}}(0) \geq C^{-1}
\]
and
\[
u_{i\ov{j}}(0) = -u_{x_{n}}(0)\rho_{i\ov{j}}(0), \ \ \text{for $1\leq i,j\leq n-1$}.
\]
Since $\ddbar\rho\in\Gamma_{m}$, then $[\rho_{i\ov{j}}(0)]_{1\leq i,j\leq n-1}\in\Gamma_{m-1}$. It then follows that
\[
\sigma_{m-1}\left([u_{i\ov{j}}(0)]_{1\leq i,j\leq n-1}\right)
= \left[-u_{x_{n}}(0)\right]^{m-1}\sigma_{m-1}\left([\rho_{i\ov{j}}(0)]_{1\leq i,j\leq n-1}\right) \geq C^{-1}.
\]
The tangent-tangent derivative estimate and tangent-normal derivative estimate imply:
\[
|u_{i\ov{j}}(0)| \leq C, \ \ |u_{i\ov{n}}(0)|+|u_{n\ov{j}}(0)| \leq C\sqrt{K}, \ \ \text{for $1\leq i,j\leq n-1$}.
\]
Since $\sigma_{m}$ is equal to the sum of all $m$-th principal minors, then
\[
\sigma_{m}(u) = u_{n\ov{n}}\sigma_{m-1}\left([u_{i\ov{j}}(0)]_{1\leq i,j\leq n-1}\right)+O(K),
\]
where $O(K)$ denotes a term satisfying $|O(K)|\leq CK$. Thus we obtain $|u_{n\ov{n}}|\leq CK$, which implies our normal-normal derivative estimate.
\end{proof}

\section{Existence theorem}
\label{Existence theorem}
In this section, we solve the following Dirichlet problem for the complex Hessian equation when the right-hand side is decreasing in $u$, by using an iteration argument:
\begin{equation}\label{Dirichlet problem}
\begin{cases}
(\ddbar u)^{m}\wedge\omega^{n-m} = \psi^{m}(z,u)\omega^{n} & \text{in $\Omega$}, \\
u = 0 & \text{on $\de\Omega$}.
\end{cases}
\end{equation}

\begin{theorem}\label{existence under sub-super}
Let $\psi$ be a smooth (strictly) positive function on $\ov{\Omega}\times(-\infty,0]$ such that $\psi$ is decreasing in the second variable. Suppose that there exist a subsolution $\un{u}\in C^\infty(\ov{\Omega})$ and a supersolution $\ov{u} \in C^0(\ov{\Omega})$ such that $\un{u}<\ov{u}<0$ in $\Omega$. Then there exists $u\in\mSH(\Omega)\cap C^{\infty}(\ov\Omega)$ such that
\begin{equation}\label{existence under sub-super eqn}
\begin{cases}
(\ddbar u)^{m}\wedge\omega^{n-m} = \psi^{m}(z,u)\omega^{n} & \text{in $\Omega$}, \\
u = 0 & \text{on $\de\Omega$},
\end{cases}
\end{equation}
and:
\begin{equation}\label{existence under sub-super estimate}
\|u\|_{C^{2}(\ov{\Omega})} \leq C,
\end{equation}
where $C$ is a constant depending only on $(\ov\Omega,\omega)$, $\|\psi\|_{C^{2}}$, $\|\un{u}\|_{C^{2}}$, and the choice of a non-zero function $h\in C^\infty(\ov{\Omega})$ with $0 \leq h \leq \psi(z,\ov{u})$.
\end{theorem}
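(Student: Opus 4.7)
The plan is to construct a smooth solution to \eqref{existence under sub-super eqn} via a monotone iteration scheme that is controlled by the a priori $C^2$ estimate of Theorem \ref{a priori estimates}. Set $u_0 := \un u$ and, inductively, define $u_{k+1}\in \mSH(\Omega)\cap C^\infty(\ov\Omega)$ by solving, via Corollary \ref{CP Dirichlet corollary}, the linear-in-$v$ Dirichlet problem
\[
(\ddbar u_{k+1})^m\wedge \omega^{n-m} = \psi^m(z,u_k)\,\omega^n \ \text{in } \Omega, \qquad u_{k+1} = 0 \ \text{on } \de\Omega.
\]
This is applicable because $\psi(\cdot,u_k)$ is smooth and strictly positive on $\ov\Omega$.

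First I would establish monotonicity and uniform zeroth- and first-order control. By the comparison principle (an immediate consequence of the domination principle, Proposition \ref{Domination Principle}) together with the fact that $\psi$ is decreasing in its second argument, an induction yields $\un u = u_0 \le u_1 \le u_2 \le \cdots \le \ov u$ in $\Omega$: the inductive step is that $u_{k-1}\le u_k$ gives $\Hm(u_{k+1}) = \psi^m(z,u_k)\omega^n \le \psi^m(z,u_{k-1})\omega^n = \Hm(u_k)$ with the same vanishing boundary values, hence $u_{k+1}\ge u_k$ by comparison. To provide the smooth upper barrier required by Theorem \ref{a priori estimates}, choose a smooth strictly positive $h$ with $h\le \psi(z,\ov u)$ (possible since $\ov u\in C^0$ makes $\psi(z,\ov u)$ bounded away from zero) and solve $\Hm(w) = h^m\omega^n$ with $w=0$ on $\de\Omega$ using Corollary \ref{CP Dirichlet corollary}; comparison then gives $u_k\le w<0$ in $\Omega$. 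Since $\un u\le u_k\le w$ with all three vanishing on $\de\Omega$, the uniform $L^\infty$ bound on $u_k$ is immediate, and comparing normal derivatives at boundary points yields a uniform boundary $C^1$ bound.

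Next, applying Theorem \ref{a priori estimates} to $(u,v)=(u_{k+1},u_k)$ with subsolution $\un u$ and upper barrier $w$, and writing $\Lambda_k := \sup_{\ov\Omega}|\nabla^2 u_k|$, yields the recursive Hessian estimate
\[
\Lambda_{k+1}\le \tfrac12\,\Lambda_k + C\sup_\Omega |\de u_{k+1}|^2 + C\sup_\Omega|\de u_k|^2 + C,
\]
with $C$ depending only on the data. Combined with a uniform interior $C^1$ bound on the iterates, the contraction factor $\tfrac12$ iterates to produce a uniform $\|u_k\|_{C^2(\ov\Omega)}\le C$. Monotonicity and Arzela--Ascoli then give $u_k\to u$ in $C^{1,\alpha}(\ov\Omega)$; since $u\le w<0$ in $\Omega$, the limiting equation is strictly elliptic at $u$, so Evans--Krylov and boundary Schauder theory promote $u$ to $C^\infty(\ov\Omega)$, and the estimate \eqref{existence under sub-super estimate} follows by passing to the limit in the $C^2$ bound. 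Uniqueness is an immediate consequence of the comparison principle together with the monotonicity of $\psi$ in its second argument.

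The main obstacle is producing the uniform interior gradient bound needed to close the $C^2$ recursion; as recalled in the introduction, an interior $C^1$ estimate for the complex $m$-Hessian equation for general $m$ is a long-standing open problem, so one cannot import a black-box result. My plan is to exploit the uniform $L^\infty$ bound and the uniform boundary $C^1$ bound from the previous paragraph together with an interpolation of the form $\|\de u\|_{L^\infty}^2 \le \epsilon\|\nabla^2 u\|_{L^\infty} + C_\epsilon$, or, more robustly, a maximum-principle argument applied to a quantity like $|\de u|^2 e^{-\lambda u}$ on the linearized operator of the iteration, so that the gradient terms on the right-hand side of the recursive inequality can be absorbed and the contraction factor $\tfrac12$ drives the recursion to a uniform bound.
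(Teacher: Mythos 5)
Your architecture coincides with the paper's: the same iteration with $u_{k+1}$ solving $\Hm(u_{k+1})=\psi^{m}(z,u_{k})\omega^{n}$, the same monotonicity $\un{u}\leq u_{k}\leq u_{k+1}\leq\ov{u}<0$ via the domination principle, the introduction of a smooth upper barrier $w$ built from a smooth $h$ with $0\leq h\leq\psi(z,\ov{u})$ (the paper takes $w$ to solve the linear problem $\Delta_{\omega}w=nh$, comparing via Maclaurin's inequality, rather than $\Hm(w)=h^{m}\omega^{n}$; both produce an admissible $1$-subharmonic barrier for Theorem \ref{a priori estimates}), and the recursive Hessian inequality $L_{k+1}\leq\tfrac{1}{2}L_{k}+CK_{k+1}+CK_{k}+C$.

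The genuine gap is exactly where you flag it: the interior gradient bound needed to close the recursion, and neither of your proposed fixes works. The interpolation $\sup|\de u|^{2}\leq\ve\sup|\nabla^{2}u|+C_{\ve}$ with $\ve$ arbitrarily small is false for functions whose $L^{\infty}$-norm is merely bounded (test $M\sin(kx)$ with $k\to\infty$: the true inequality has the form $\sup|\de u|^{2}\leq C\|u\|_{L^{\infty}}\sup|\nabla^{2}u|+C\|u\|_{L^{\infty}}^{2}$, and the coefficient $C\|u\|_{L^{\infty}}$ is a fixed constant, not a small one, so the terms $CK_{k+1}$ and $CK_{k}$ cannot be absorbed). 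A maximum principle applied to $|\de u|^{2}e^{-\lambda u}$ against the linearized operator $F^{i\ov{j}}\de_{i}\de_{\ov{j}}$ is precisely the {\it a priori} gradient estimate for the complex $m$-Hessian equation which, as the paper stresses, is a long-standing open problem for $1<m<n$ (B\l ocki's argument is specific to $m=n$); you cannot invoke it. The paper closes the loop differently: it iterates the Hessian inequality to get $L_{j}\leq L_{0}+3C_{0}\max_{0\leq\beta\leq j}K_{\beta}$, selects the index $\alpha_{j}$ realizing this maximum so that $L_{\alpha_{j}}\leq L_{0}+3C_{0}K_{\alpha_{j}}$ holds with the \emph{same} $K$ on both sides, and then applies the blow-up argument of \cite[Section 6]{CP22} to $u_{\alpha_{j}}$: a bound of the form $\sup|\nabla^{2}u|\leq C(1+\sup|\de u|^{2})$ together with the uniform $C^{0}$ bound yields $K_{\alpha_{j}}\leq C$ by rescaling around a point of near-maximal gradient and contradicting a Liouville-type theorem for the limit. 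Your absorption scheme should be replaced by this blow-up step; the rest of your write-up then goes through.
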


The role of the function $h$ is to smooth out the non-zero function $\psi(z, \ov{u})$, so the estimates are independent of the $C^2$-norm of $\ov{u}$.

\begin{proof}
We define a sequence of functions $\{u_{j}\}_{j=0}^{\infty}$ inductively. Set $u_{0}=\un{u}$. By Corollary \ref{CP Dirichlet corollary}, there exists $u_{j}\in\mSH(\Omega)\cap C^{\infty}(\ov\Omega)$ solving
\[
\begin{cases}
(\ddbar u_{j})^{m}\wedge\omega^{n-m} = \psi^{m}(z ,u_{j-1})\omega^{n} & \text{in $\Omega$}, \\
u_{j} = 0 & \text{on $\de\Omega$}.
\end{cases}
\]
Since $\psi$ is decreasing in the second variable, induction and the domination principle (see \cite[Subsection 3.5: Step 1]{BZ23a}) show that $\un{u}\leq u_{j-1}\leq u_{j} \leq \ov{u}<0$.

We now smooth out $\ov{u}$. Choose some non-zero function $h\in C^\infty(\ov{\Omega})$ such that:
\[
0 \leq h\leq \psi(z, \ov{u}).
\]
Using $u_{j}\leq\ov{u}<0$, we have
\[
(\ddbar u_{j})^{m}\wedge\omega^{n-m} \geq \psi^{m}(z ,\ov{u})\omega^{n} \geq h^{m} \omega^{n}.
\]
By Maclaurin's inequality,
\[
\Delta_{\omega}u_{j} \geq n\left(\frac{(\ddbar u_{j})^{m}\wedge\omega^{n-m}}{\omega^{n}}\right)^{\frac{1}{m}}
\geq nh \ \text{in $\Omega$}.
\]
We then let $w\in\SH_{1}(\Omega)\cap C^{\infty}(\ov{\Omega})$ be the non-zero solution of the Dirichlet problem
\[
\begin{cases}
\Delta_{\omega}w = nh & \text{in $\Omega$}, \\
w = 0 & \text{on $\de\Omega$};
\end{cases}
\]
from the comparison principle, it follows that for any $j$,
\[
\un{u} \leq u_{j} \leq w < 0 \ \text{in \text{$\Omega$}}, \quad
\un{u} = u_{j} = w = 0 \ \text{on \text{$\de\Omega$}}.
\]

\smallskip

We may now apply Theorem \ref {a priori estimates}. Define
\[
L_{j} = \sup_{\Omega}|\nabla^{2} u_{j}|, \quad\text{ and }\quad K_{j} = \sup_{\Omega}|\de u_{j}|^{2}+1.
\]
Theorem \ref{a priori estimates} shows that:
\[
\|u_{j}\|_{C^{0}} \leq C_{0}, \quad L_{j} \leq \frac{L_{j-1}}{2}+C_{0}K_{j}+C_{0}K_{j-1},
\]
so that
\begin{equation}\label{thingy}
L_{j} \leq \frac{L_{0}}{2^{j}}+3C_{0}\sum_{i=0}^{j}\frac{K_{j-i}}{2^{i}}
\leq L_{0}+3C_{0}\max_{0\leq\beta\leq j}K_{\beta}.
\end{equation}
Fix some $j \geq 1$, and choose $1 \leq \alpha_j \leq j$ such that:
\[
K_{\alpha_{j}} = \max_{0\leq \beta \leq j}K_{\beta}.
\]
Then
\[
\|u_{\alpha_{j}}\|_{C^{0}} \leq C_{0}, \quad
L_{\alpha_{j}} \leq L_{0} + 3C_{0}K_{\alpha_{j}}.
\]
Applying the blow-up argument of \cite[Section 6]{CP22}, we obtain
\[
K_{\alpha_{j}} \leq C,
\]
for some $C$ independent of $j$. Then by \eqref{thingy}, we have $L_\alpha \leq C$ for all $1 \leq \alpha \leq j$ for $C$ independent of $j$, which gives:
\[
\sup_{\Omega}|\nabla^{2}u_{\alpha}|+\sup_{\Omega}|\de u_{\alpha}|^{2} \leq C,
\]
for all $\alpha \geq 1$. By standard elliptic theory, we conclude that $u = \lim_{\alpha\rightarrow\infty} u_\alpha$ is the required solution of \eqref{existence under sub-super eqn}.
\end{proof}

We can use a perturbation argument to deal with the case when $\psi$ is zero on the boundary:

\begin{theorem}\label{existence under sub-super degenerate}
Let $\psi$ be a smooth non-negative function on $\ov{\Omega}\times(-\infty,0]$ such that $\psi$ is (strictly) positive on $\Omega\times(-\infty,0)$ and decreasing in the second variable. Suppose that there exist a strict subsolution $\un{u}\in C^\infty(\ov{\Omega})$ and a supersolution $\ov{u} \in C^0(\ov{\Omega})$ such that $\un{u}<\ov{u}<0$ in $\Omega$. Then there exists $u\in\mSH(\Omega)\cap C^{\infty}(\Omega)\cap C^{1,1}(\ov\Omega)$ such that
\[
\begin{cases}
(\ddbar u)^{m}\wedge\omega^{n-m} = \psi^{m}(z,u)\omega^{n} & \text{in $\Omega$}, \\
u = 0 & \text{on $\de\Omega$}.
\end{cases}
\]
\end{theorem}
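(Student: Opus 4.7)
The plan is to perturb the degenerate right-hand side to a strictly positive one, apply Theorem \ref{existence under sub-super} to produce approximate solutions with estimates uniform in the perturbation parameter, and then pass to the limit. For $\e \in (0, \e_0]$, set $\psi_\e(z, s) := \psi(z, s) + \e$, which is smooth, strictly positive on $\ov{\Omega} \times (-\infty, 0]$, and still decreasing in $s$. The strict subsolution hypothesis gives $(\ddbar \un{u})^m \wedge \omega^{n-m} \geq (\psi(z, \un{u}) + \e_0)^m \omega^n \geq \psi_\e(z, \un{u})^m \omega^n$, so $\un{u}$ remains a subsolution for $\psi_\e$; since $\psi_\e \geq \psi$, $\ov{u}$ remains a supersolution for $\psi_\e$.

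Next I would fix, once and for all, a non-zero function $h \in C^\infty(\ov{\Omega})$ with $0 \leq h(z) \leq \psi(z, \ov{u}(z))$ on $\ov{\Omega}$; such an $h$ exists because $\psi(z, \ov{u}) > 0$ on $\Omega$, even though it may vanish on $\de \Omega$. Since $h \leq \psi \leq \psi_\e$ uniformly in $\e$, Theorem \ref{existence under sub-super} applies to $\psi_\e$ with this same $h$ and yields solutions $u_\e \in \mSH(\Omega) \cap C^\infty(\ov{\Omega})$ of the Dirichlet problem for $\psi_\e$, satisfying $\un{u} \leq u_\e \leq \ov{u}$ together with a $C^2(\ov{\Omega})$ estimate whose constant depends only on $(\ov{\Omega}, \omega)$, $\|\psi_\e\|_{C^2}$, $\|\un{u}\|_{C^2}$, and $h$. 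Since $\|\psi_\e\|_{C^2} \leq \|\psi\|_{C^2} + 1$ and the other data are independent of $\e$, we obtain a \emph{uniform} bound $\|u_\e\|_{C^2(\ov{\Omega})} \leq C$ for all $\e \in (0, \e_0]$.

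I would then extract a limit. By Arzel\`a-Ascoli, pass to a subsequence with $u_{\e_k} \to u$ in $C^{1,\alpha}(\ov{\Omega})$ for every $\alpha \in (0,1)$; the uniform $C^{1,1}$ bound is preserved in the limit, $m$-subharmonicity is inherited from $u_{\e_k} \in \mSH(\Omega)$, and $u|_{\de \Omega} = 0$ follows from uniform convergence. To upgrade to interior smoothness, observe that on any compact $K \subset \Omega$, the values $u_\e(z) \in [\un{u}(z), \ov{u}(z)]$ lie in a fixed compact subset of $(-\infty, 0)$, so
\[
\psi_\e(z, u_\e(z)) \geq \inf_{(z,s) \in K \times [\inf_K \un{u}, \sup_K \ov{u}]} \psi(z, s) > 0
\]
uniformly in $\e$. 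Combined with the uniform $C^2$ bound, this makes the equation uniformly elliptic on $K$ along the sequence, so Evans-Krylov and Schauder bootstrapping provide uniform $C^{k,\alpha}_{\text{loc}}(\Omega)$ bounds. Hence $u_{\e_k} \to u$ in $C^\infty_{\text{loc}}(\Omega)$ and $u \in C^\infty(\Omega)$ solves $(\ddbar u)^m \wedge \omega^{n-m} = \psi^m(z, u)\omega^n$ in $\Omega$.

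The main obstacle is securing $\e$-uniformity of every constant that enters Theorem \ref{existence under sub-super}. Three observations handle this: (i) the \emph{strict} subsolution margin $\e_0$ is precisely what permits the perturbation $\psi \rightsquigarrow \psi + \e$ without destroying the subsolution property; (ii) the auxiliary function $h$ can be chosen once and for all, since $\psi_\e \geq \psi \geq h$; and (iii) $\|\psi_\e\|_{C^2}$ is trivially uniform because $\psi_\e - \psi$ is a constant. Without the strict subsolution hypothesis one would lose (i), which is why that strengthening is essential in the degenerate setting.
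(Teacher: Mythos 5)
Your proposal is correct and follows essentially the same route as the paper: perturb to $\psi+\e$, use the strict subsolution margin $\e_0$ and a fixed choice of $h$ to get $\e$-uniform $C^2(\ov{\Omega})$ bounds from Theorem \ref{existence under sub-super}, then use interior non-degeneracy of $\psi$ on compact subsets to bootstrap and pass to the limit. The only difference is that you spell out the limiting/approximation argument in more detail than the paper does.
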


\begin{proof}
Let $\ve_{0}$ be the constant in the definition of strict subsolution (see Definition \ref{definitions} (2)). For $\ve\in(0,\ve_{0})$, by Theorem \ref{existence under sub-super}, there exists $u_{\ve}\in\mSH(\Omega)\cap C^{\infty}(\ov\Omega)$ such that
\[
\begin{cases}
(\ddbar u_{\ve})^{m}\wedge\omega^{n-m} = (\psi(z,u_{\ve})+\ve)^m\omega^{n} & \text{in $\Omega$}, \\
u_{\ve} = 0 & \text{on $\de\Omega$},
\end{cases}
\]
and
\begin{equation}\label{u ve C 2}
\|u_{\ve}\|_{C^{2}(\ov{\Omega})} \leq C,
\end{equation}
where $C$ is a constant depending only on $(\ov\Omega,\omega)$, $\|\psi\|_{C^{2}}$, $\|\un{u}\|_{C^{2}}$, and $h$, which we can choose independent of $\e$. Since $\psi$ is (strictly) positive in $\Omega\times(-\infty,0)$, for any $\Omega'\Subset\Omega$, we can find some $c_{\Omega'} > 0$, which is independent of $\e$, and such that
\[
(\psi(z,u_{\ve})+\ve)^m \geq \psi^{m}(z,u_{\ve}) \geq c_{\Omega'} > 0;
\]
by standard elliptic theory, we obtain uniform higher order estimates for $u_{\ve}$ on $\Omega'$. Combining these with \eqref{u ve C 2} and an approximation argument, we conclude.
\end{proof}

\section{Existence and Uniqueness of the Eigenfunction}
\label{Existence and Uniqueness of the Eigenfunction}

In this section we will prove out main results, Theorems \ref{Theorem 1} and \ref{Theorem 2}. We start with:
\begin{theorem}[Theorem \ref{Theorem 1}]\label{Theorem 1 in Section 5}
Suppose that $\Omega$ is a strongly $m$-pseudoconvex manifold and $f\in C^\infty(\ov{\Omega})$ is (strictly) positive. Then there exists a unique $\lambda_1 > 0$ and a unique $u_1\in \mSH(\Omega)\cap C^\infty(\Omega)\cap C^{1,1}(\ov{\Omega})$ solving:
\begin{equation}\label{CHE eigenvalue Section}
\begin{cases}
(\ddbar u_1)^m\wedge \omega^{n-m} = (-\lambda_1 u_1)^m f^m\omega^n& \text{ in }\Omega,\\
u_1 = 0 & \text{ on }\p\Omega,\\
\inf_\Omega u_1 = 1.
\end{cases}
\end{equation}
\end{theorem}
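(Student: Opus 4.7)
The plan is to construct the eigenpair $(\lambda_1,u_1)$ by a perturbation and rescaling argument, to extract regularity from the a priori bounds of Section \ref{A Priori Estimates}, and to establish uniqueness via a Hopf-lemma type argument for the linearization. For each parameter $\lambda \geq 0$, I would consider the auxiliary Dirichlet problem
\[
\begin{cases}
\sigma_{m}(u_{\lambda}) = (1-\lambda u_{\lambda})^{m}f^{m} & \text{in }\Omega,\\
u_{\lambda} = 0 & \text{on }\p\Omega,
\end{cases}
\]
whose right-hand side is strictly positive and decreasing in $u_\lambda$. When $\lambda = 0$, Corollary \ref{CP Dirichlet corollary} provides a smooth, strictly negative solution $u_{0}$; for $\lambda > 0$ small, $u_0$ is a strict supersolution and $A\rho$ is a strict subsolution for $A$ sufficiently large, so Theorem \ref{existence under sub-super} yields a smooth solution $u_\lambda$. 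Set
\[
\lambda_{1} := \sup\{\lambda > 0 : \text{the auxiliary problem is solvable at this parameter value}\}.
\]
The comparison principle implies that $\lambda \mapsto u_\lambda$ is monotone decreasing and that the set of solvable $\lambda$ is an interval $[0,\lambda_1)$.

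The heart of the existence proof is to show $\lambda_1 < \infty$ and that $M_\lambda := \|u_\lambda\|_{L^\infty(\Omega)}$ blows up as $\lambda \nearrow \lambda_1$. Finiteness follows from Maclaurin's inequality, which gives a pointwise bound of the form $\Delta_\omega u_\lambda \geq c_n (1 - \lambda u_\lambda) f$; testing this against the first eigenfunction of $\Delta_\omega$ with weight $f$ (Theorem \ref{first eigenvalue Laplacian}) produces a contradiction for $\lambda$ too large. The blow-up statement is proved by contradiction: if $M_\lambda$ stayed bounded along a sequence $\lambda_k \nearrow \lambda_1$, the stability estimate (Theorem \ref{stability estimate}) combined with Theorem \ref{a priori estimates} would yield uniform $C^2$ bounds for the $u_{\lambda_k}$ up to the boundary. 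Passing to the limit and applying the continuity method would then solve the auxiliary problem past $\lambda_1$, contradicting its definition. Granted the blow-up, I would set $v_\lambda := u_\lambda/M_\lambda$ and observe
\[
\sigma_{m}(v_{\lambda}) = (M_{\lambda}^{-1} - \lambda v_{\lambda})^{m}f^{m}, \qquad \inf_\Omega v_\lambda = -1.
\]
Letting $\lambda \nearrow \lambda_1$ and passing to a subsequential limit $v_\lambda \to u_1$ would give the desired eigenfunction.

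To obtain regularity $u_1 \in C^{\infty}(\Omega) \cap C^{1,1}(\ov{\Omega})$, I would apply Theorem \ref{a priori estimates} directly to $v_\lambda$, using the uniform subsolution $A\rho$ and a supersolution $w$ manufactured from a solution of $\Delta_\omega w = n h$ for some fixed $0 \leq h \leq f$, exactly as in the proof of Theorem \ref{existence under sub-super}. Since these sub- and supersolutions are independent of $\lambda$, the resulting $C^{2}(\ov\Omega)$ bounds on $v_\lambda$ are also independent of $\lambda$, and passing to the limit gives $u_1 \in C^{1,1}(\ov\Omega)$. Interior smoothness then follows from standard elliptic regularity, since $-u_1 > 0$ on $\Omega$ by the strong maximum principle.

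The main obstacle is uniqueness, where the $C^{1,1}(\ov\Omega)$ regularity of $u_1$ becomes essential. Suppose $(\lambda_1',u_1')$ is another solution. Linearizing $\sigma_m^{1/m}$ at $u_1$ produces the operator $L := \sigma_m^{i\ov{j}}(u_1) \p_i \p_{\ov j}$, which is uniformly elliptic on compact subsets of $\Omega$ but degenerates on $\p\Omega$. I would set $w := u_1' - c u_1$ for a suitable constant $c > 0$, so that $w \leq 0$ on $\p\Omega$, and derive a differential inequality $L w \geq -\tilde{c} w$ with $\tilde c$ strictly below the first eigenvalue for $L$ associated to the weight $\lambda_1^m m (-u_1)^{m-1} f^m$. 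Lemma \ref{max prin c} would then force $w \leq 0$, and swapping the roles of $u_1$ and $u_1'$ would yield $u_1 = c u_1'$, hence $\lambda_1 = \lambda_1'$ and $u_1 = u_1'$. The technical difficulty is the degeneracy of $L$ on $\p\Omega$: this is overcome using the $C^{1,1}(\ov\Omega)$ bound on $u_1$ and a modified Hopf lemma to extract enough positivity for $L$ near the boundary, as indicated in the introduction.
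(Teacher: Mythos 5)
Your overall strategy (continuity path $\sigma_m(u_\lambda)=(1-\lambda u_\lambda)^m f^m$, definition of $\lambda_1$ as the supremum of solvable parameters, finiteness via Maclaurin plus the linear eigenvalue, blow-up of $\|u_\lambda\|_{L^\infty}$, normalization, and a Hopf-type argument for uniqueness) is the same as the paper's. However, there is a genuine gap at the step where you claim the $C^2(\ov\Omega)$ bounds for the normalized family $v_\lambda = u_\lambda/M_\lambda$.

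You propose to apply Theorem \ref{a priori estimates} to $v_\lambda$ with a supersolution $w$ obtained by solving $\Delta_\omega w = nh$ for a \emph{fixed} $h$ with $0\leq h\leq f$, ``exactly as in the proof of Theorem \ref{existence under sub-super}.'' This does not work. To conclude $v_\lambda \leq w < 0$ by the comparison principle you need $\Delta_\omega v_\lambda \geq nh$ globally, but the equation only gives $\Delta_\omega v_\lambda \geq n(M_\lambda^{-1}-\lambda v_\lambda)f$, and the factor $M_\lambda^{-1}-\lambda v_\lambda$ tends to $0$ near $\p\Omega$ as $\lambda\nearrow\lambda_1$ (since $M_\lambda\to\infty$ and $v_\lambda=0$ on $\p\Omega$). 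So no fixed nonzero $h\leq f$ is admissible, and without a strictly negative upper barrier $w$ the normal--normal boundary Hessian estimate (which uses the Hopf lemma applied to $w$ to get $-u_{x_n}\geq C^{-1}$) collapses. This is exactly the difficulty the paper flags: it is resolved by first proving that $v_{\lambda_j}$ is Cauchy in $C^0(\ov\Omega)$ via the Dinew--Ko\l odziej stability estimate, locating a ball $B\Subset\Omega$ on which $-v_{\lambda_j}\geq 1/3$ uniformly, and then taking $h=\tfrac{\lambda_1}{6}f\theta_B$ with $\theta_B$ a cutoff supported in $B$ -- so that $h$ vanishes where the equation degenerates but is still nonzero. (Relatedly, you invoke the stability estimate in the blow-up step, where it is not needed -- the domination principle already sandwiches $u_\lambda$ between $(1+\lambda_1 M)u_0$ and $u_0$ there -- but omit it in the normalization step, where it is essential.)

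A secondary issue is the uniqueness sketch: your linearization-plus-Hopf argument presupposes that the two eigenvalues agree, since the differential inequality $Lw\geq -\lambda wf$ for the competitor is derived using $\lambda_1'=\lambda_1$. The paper first proves equality of eigenvalues by an integration-by-parts argument based on the Dinew--Lu mixed Hessian inequality $\binom{n}{m}\ddbar w\wedge(\ddbar u)^{m-1}\wedge\omega^{n-m}\geq \sigma_m^{1/m}(w)\sigma_m^{(m-1)/m}(u)\,\omega^n$, and only then runs the modified Hopf lemma to get proportionality of the eigenfunctions. You would need to supply that first step.
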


\begin{proof}
Following Lions \cite{Lions85} and Wang \cite{Wang94}, for $\lambda\geq0$, we consider the Dirichlet problem:
\begin{equation}\label{continuity path}
\begin{cases}
(\ddbar u)^{m}\wedge\omega^{n-m} =(1-\lambda u)^m f^m\omega^{n} & \text{in $\Omega$}, \\
u = 0 & \text{on $\de\Omega$}.
\end{cases}
\end{equation}
Define:
\[
I = \{\lambda\geq0~|~\text{there is $u_{\lambda}\in\mSH({\Omega})\cap C^{\infty}(\ov\Omega)$ solving \eqref{continuity path}} \},
\]
and
\[
\lambda_1 = \sup_{\lambda\in I}\lambda.
\]
We split the argument into several steps.

\bigskip
\noindent
{\bf Step 1.} $\lambda_{1}$ is well-defined with $\lambda_{1}\in(0,\infty)$.
\bigskip

By Corollary \ref{CP Dirichlet corollary}, when $\lambda=0$, there exists $u_{0}\in\mSH({\Omega})\cap C^{\infty}(\ov\Omega)$ solving
\[
\begin{cases}
(\ddbar u_{0})^{m}\wedge\omega^{n-m} =  f^m\omega^{n} & \text{in $\Omega$}, \\
u_{0} = 0 & \text{on $\de\Omega$}.
\end{cases}
\]
Then $I$ is not empty and so $\lambda_{1}$ is well-defined with $\lambda_{1}\in[0,\infty]$. It is clear that for $\lambda\in I$,
\[
(\ddbar u_{0})^{m}\wedge\omega^{n-m} \leq (1-\lambda u_{0})^{m}f^m\omega^{n}.
\]
This means $u_{0}$ is a supersolution of \eqref{continuity path}. To show $\lambda_{1}>0$, we observe that for $\lambda\in(0,2^{-1}\|u_{0}\|_{L^{\infty}(\Omega)}^{-1})$,
\[
(\ddbar (2u_0))^m\wedge\omega^{n-m}=2^mf^m\omega^n\geq (1-\lambda (2u_0))^mf^m\omega^n,
\]
which implies that $2u_{0}$ is a subsolution of \eqref{continuity path}. By Theorem \ref{existence under sub-super} (with $\un{u}=2u_{0}$ and $\ov{u}=u_{0}$), we have $(0,2^{-1}\|u_{0}\|_{L^{\infty}(\Omega)}^{-1})\subseteq I$, so $\lambda_{1}\geq 2^{-1}\|u_{0}\|_{L^{\infty}(\Omega)}^{-1}>0$.

To show $\lambda_{1}<\infty$, for any $\lambda\in I$, Maclaurin's inequality shows
\[
\Delta_{\omega}u_{\lambda} \geq n\left(\frac{(\ddbar u_{\lambda})^{m}\wedge\omega^{n-m}}{\omega^{n}}\right)^{\frac{1}{m}}
= n(1-\lambda u_{\lambda})f \geq -(n\lambda\inf_{\Omega}f)u_{\lambda}.
\]
By Theorem \ref{first eigenvalue Laplacian}, we obtain $n\lambda\inf_{\Omega}f \leq \mu_{1}(\Delta_{\omega})$ and so
\[
\lambda_{1}\leq\frac{\mu_{1}(\Delta_{\omega})}{n\inf_{\Omega}f}<\infty.
\]

\bigskip
\noindent
{\bf Step 2.} $\|u_{\lambda}\|_{L^{\infty}(\Omega)}\rightarrow\infty$ as $\lambda\rightarrow\lambda_{1}$.
\bigskip

Indeed, if not, there exists a constant $M$ and a sequence $\lambda_j\in(0,\lambda_1)$, converging to $\lambda_1$ such that
\[
\|u_{\lambda_j}\|_{L^{\infty}(\Omega)} \leq M.
\]
For any $\lambda \in I$, it is clear that
\[
(\ddbar u_{\lambda})^{m}\wedge\omega^{n-m} \geq f^{m}\omega^{n} = (\ddbar u_{0})^{m}\wedge\omega^{n-m}
\]
and
\[
(\ddbar u_{\lambda})^{m}\wedge\omega^{n-m} \leq (1+\lambda_{1}M)^{m}f^{m}\omega^{n}
= (\ddbar(1+\lambda_{1}M)u_{0})^{m}\wedge\omega^{n-m}.
\]
Then the domination principle shows that
\[
(1+\lambda_{1}M)u_{0} \leq u_{\lambda} \leq u_{0},
\]
so by Theorem \ref{a priori estimates} (with $\un{u}=(1+\lambda_{1}M)u_{0}$ and $w=u_{0}$), we obtain
\[
\|u_{\lambda}\|_{L^{\infty}(\Omega)} \leq C, \quad
\sup_{\Omega}|\nabla^{2}u_{\lambda}| \leq C\sup_{\Omega}|\de u_{\lambda}|^{2}+C,
\]
for $C$ independent of $\lambda$. By the blow-up argument in \cite[Section 6]{CP22}, we obtain
\[
\|u_{\lambda}\|_{C^{2}(\Omega)} \leq C.
\]
By standard elliptic theory, we can choose a subsequence $u_{\lambda_j}$ which smoothly converges to $u_* \in \mSH(\Omega)\cap C^\infty(\ov\Omega)$, so that $(u_*,\lambda_1)$ is a solution of (\ref{continuity path}). Choosing
\[
\delta \in \left(0,\|u_*\|_{L^{\infty}(\Omega)}^{-1}\right) \ \text{and} \
C_{\delta} = \frac{1}{1-\delta\|u_*\|_{L^{\infty}(\Omega)}},
\]
we compute
\[
\begin{split}
& (\ddbar(C_{\delta}u_*))^m\wedge\omega^{n-m}
= C_{\delta}^m(1-\lambda_1u_*)^mf^m\omega^n  \\[1mm]
= {} & (C_{\delta}-\lambda_1(C_{\delta}u_*))^mf^m\omega^n
\geq (1-(\lambda_1+\delta)(C_{\delta}u_*))^mf^m\omega^n.
\end{split}
\]
However, by Theorem \ref{existence under sub-super} (with $\un{u}=C_{\delta}u_{0}$ and $\ov{u}=u_{0}$), there exists a solution $u_\lambda$ of (\ref{continuity path}) for $\lambda_1\leq\lambda\leq\lambda_1+\delta$, a contradiction with the definition of $\lambda_1$.

\bigskip
\noindent
{\bf Step 3.} $\lambda_{1}$ is the required first eigenvalue.
\bigskip

We consider the normalized family
\begin{equation*}
v_\lambda:=\frac{u_\lambda}{\|u_\lambda\|_{L^{\infty}(\Omega)}}
\end{equation*}
for $\lambda<\lambda_1$. Then $v_\lambda$ satisfies $\|v_\lambda\|_{L^{\infty}(\Omega)}=1$ and
\begin{equation}\label{Dirichlet v lambda}
\begin{cases}
(\ddbar v_\lambda)^m\wedge\omega^{n-m}
=(\|u_\lambda\|_{L^{\infty}(\Omega)}^{-1}-\lambda v_\lambda)^m f^m\omega^n & \text{in $\Omega$},  \\
v_\lambda=0 & \text{on $\partial\Omega$}.
\end{cases}
\end{equation}
Using Step 2 and $\lambda<\lambda_{1}$, we have
\[
(\ddbar v_\lambda)^m\wedge\omega^{n-m} \leq (1+\lambda_{1})^{m}f^{m}\omega^{n}
= (\ddbar (1+\lambda_{1})u_{0})^m\wedge\omega^{n-m}.
\]
Then the domination principle implies
\begin{equation}\label{lower barrier u lambda}
(1+\lambda_{1})u_{0} \leq v_{\lambda}.
\end{equation}

To apply Theorem \ref{a priori estimates}, we need to construct an appropriate supersolution, $w$. By compactness of $m$-subharmonic functions, there exists a subsequence $\{\lambda_j\}$, with $\lambda_j\rightarrow \lambda_1$ as $j\rightarrow \infty$, such that the $v_{\lambda_j}$ converge in $L^1_{\mathrm{loc}}$ to some $u_1\in \mSH(\Omega)$. In \cite[Section 4: Step 5]{BZ23a}, Badiane-Zeriahi used the {\it a priori} gradient estimate of B\l ocki \cite{Bl09} for the complex Monge-Amp\`ere equation to upgrade this to $C^0$-convergence; the corresponding gradient estimate for the complex $m$-Hessian equation is not currently known. Instead, we will use the stability estimate, Theorem \ref{stability estimate}, to show this convergence. After that, we can follow \cite[Section 4: Step 5]{BZ23a} to construct the required $w$.

Specifically, we will show that $\{v_{\lambda_{j}}\}$ is a Cauchy sequence in $C^{0}(\ov{\Omega})$. Using $u_{0}=0$ on $\de\Omega$, for any $\ve>0$,  there exists $\Omega'\Subset\Omega$ such that
\[
\|u_{0}\|_{L^{\infty}(\Omega\setminus\Omega')} \leq \ve
\]
and so
\[
\|v_{\lambda_{k}}-v_{\lambda_{l}}\|_{L^{\infty}(\Omega\setminus\Omega')}
\leq 2\|Cu_{0}\|_{L^{\infty}(\Omega\setminus\Omega')} \leq 2\ve,
\]
by \eqref{lower barrier u lambda}. On the other hand, we have $v_{\lambda_{j}}\to u_{1}$ in $L_{\mathrm{loc}}^{1}(\Omega)$, so
\[
\|v_{\lambda_{k}}-v_{\lambda_{l}}\|_{L^{1}(\Omega')} \leq \ve.
\]
Using $\|v_{\lambda_{k}}\|_{L^{\infty}(\Omega)}=\|v_{\lambda_{l}}\|_{L^{\infty}(\Omega)}=1$, we have
\[
\|v_{\lambda_{k}}-v_{\lambda_{l}}\|_{L^{p}(\Omega')}
\leq \|v_{\lambda_{k}}-v_{\lambda_{l}}\|_{L^{\infty}(\Omega')}^{\frac{p-1}{p}}\cdot\|v_{\lambda_{k}}-v_{\lambda_{l}}\|_{L^{1}(\Omega')}^{\frac{1}{p}}
\leq 2^{\frac{p-1}{p}} \ve^{\frac{1}{p}}.
\]
Thus
\[
\|v_{\lambda_{k}}-v_{\lambda_{l}}\|_{L^{p}(\Omega)}
\leq \|v_{\lambda_{k}}-v_{\lambda_{l}}\|_{L^{p}(\Omega\setminus\Omega')}+\|v_{\lambda_{k}}-v_{\lambda_{l}}\|_{L^{p}(\Omega')}
\leq 2\ve+2^{\frac{p-1}{p}}\ve^{\frac{1}{p}}.
\]
Using Step 2, for sufficiently large $k$ and $l$.
\[
\|u_{\lambda_{k}}\|_{L^{\infty}(\Omega)}^{-1}+\|u_{\lambda_{l}}\|_{L^{\infty}(\Omega)}^{-1}\leq\ve.
\]
Now Theorem \ref{stability estimate} shows that:
\[
\begin{split}
& \|v_{\lambda_k}-v_{\lambda_l}\|_{L^{\infty}(\Omega)} \\
\leq {} & \|f\|_{L^{\infty}(\Omega)}\cdot\left\|(\|u_{\lambda_{k}}\|_{L^{\infty}(\Omega)}^{-1}-\lambda_{k} v_{\lambda_{k}})^m
-(\|u_{\lambda_{l}}\|_{L^{\infty}(\Omega)}^{-1}-\lambda_{l} v_{\lambda_{l}})^m \right\|_{L^{p}(\Omega)}^{\frac{1}{m}} \\
\leq {} & C\left\|(\|u_{\lambda_{k}}\|_{L^{\infty}(\Omega)}^{-1}-\lambda_{k} v_{\lambda_{k}})
-(\|u_{\lambda_{l}}\|_{L^{\infty}(\Omega)}^{-1}-\lambda_{l} v_{\lambda_{l}}) \right\|_{L^{p}(\Omega)}^{\frac{1}{m}} \\
\leq {} & C\left(\|\lambda_k v_{\lambda_k}-\lambda_l v_{\lambda_l}\|_{L^{p}(\Omega)}+C\ve\right)^{\frac{1}{m}},
\end{split}
\]
where we used the elementary equality $a^{m}-b^{m}=(a-b)(a^{m-1}+\ldots+b^{m-1})$ in the third line. So combining the previous two estimates gives:
\[
\|v_{\lambda_{k}}-v_{\lambda_{l}}\|_{L^{\infty}(\Omega)}
\leq C\left(2\ve+2^{\frac{p-1}{p}}\ve^{\frac{1}{p}}
+C\ve\right)^{\frac{1}{m}}.
\]
Then $\{v_{\lambda_{j}}\}$ is a Cauchy sequence in $C^{0}(\ov{\Omega})$. We thus conclude that
\begin{equation}\label{C 0 convergence}
v_{\lambda_{j}} \to u_{1} \ \text{in $C^{0}(\ov\Omega)$}.
\end{equation}
We can now follow the argument of \cite[Section 4: Step 5]{BZ23a} to construct a super-barrier. Since $\|u_{1}\|_{C^{0}(\ov\Omega)}=1$ and $u_{1}=0$ on $\Omega$, then there exists $z_{0}\in\Omega$ such that $u_{1}(z_{0})=-1$. Let $B$ be a ball such that
\[
z_{0} \in B \Subset \Omega, \ \text{and} \
u_{1} \leq -\frac{2}{3} \ \text{in $B$}.
\]
For $j\gg1$, we have $\lambda_{j}\geq\frac{\lambda_{1}}{2}$ and $-v_{\lambda_{j}}\geq\frac{1}{3}$ in $B$. Then
\[
(\ddbar v_{\lambda_{j}})^m\wedge\omega^{n-m}
\geq (-\lambda_{j}v_{\lambda_{j}})^m f^{m}\omega^{n}
\geq \left(\frac{\lambda_{1}f}{6}\right)^{m}\omega^{n} \ \text{in $B$}.
\]
By Maclaurin's inequality,
\[
\Delta_{\omega}v_{\lambda_{j}} \geq n\left(\frac{(\ddbar v_{\lambda_{j}})^{m}\wedge\omega^{n-m}}{\omega^{n}}\right)^{\frac{1}{m}}
= \frac{n\lambda_{1}f}{6} \ \text{in $B$}.
\]
Let $\theta_{B}$ be a smooth function in $\ov{\Omega}$ such that $0\leq\theta_{B}\leq1$, $\mathrm{Supp}(\theta_{B})\subset B$ and $\theta_{B}\equiv1$ near $z_{0}$. Then there exists $w\in\SH_{1}(\Omega)\cap C^{\infty}(\ov{\Omega})$ solving the Dirichlet problem:
\[
\begin{cases}
\Delta_{\omega}w = \frac{n\lambda_{1}}{6}f\theta_{B} & \text{in $\Omega$}, \\
w = 0 & \text{on $\de\Omega$}.
\end{cases}
\]
Using the comparison principle, we obtain
\[
v_{\lambda_{j}} \leq w < 0 \ \text{in $\Omega$}, \ \ w = 0 \ \text{on $\de\Omega$}.
\]
By combining this with \eqref{lower barrier u lambda} and arguing as in Step 2, we obtain
\[
\|v_{\lambda_j}\|_{C^{2}(\Omega)} \leq C.
\]
For any $\Omega'\Subset\Omega$, using \eqref{C 0 convergence} and $u_{1}<0$ in $\Omega$, there exists a constant $c_{\Omega'}$ such that
\[
v_{\lambda_j} \leq -c_{\Omega'} < 0 \ \text{in $\Omega'$},
\]
which shows that \eqref{Dirichlet v lambda} is non-degenerate in $\Omega'$. Then by standard elliptic theory, we obtain higher order estimates of $v_{\lambda_{j}}$ in $\Omega'$. Thus we can take a further subsequence of the $v_{\lambda_j}$ which converges to $u_1$ in $C^\infty(\Omega)\cap C^{1,1}(\ov{\Omega})$. We conclude $(\lambda_{1},u_{1})$ solves the eigenvalue problem \eqref{CHE eigenvalue Section}.

\bigskip
\noindent
{\bf Step 4.} Uniqueness of $(\lambda_{1},u_{1})$.
\bigskip

This immediately follows from Theorem \ref{uniqueness in E1} below.
\end{proof}

We now prove a strong uniqueness result. It shows that the eigenfunction constructed in Theorem \ref{Theorem 1 in Section 5} is unique in the finite energy class $\mathcal{E}^1_m(\Omega)$, significantly improving on the results in \cite{BZ23b}.

\begin{theorem}\label{uniqueness in E1}
Suppose that $w\in \mathcal{E}^1_m(\Omega)$ and $\alpha > 0$ are such that $(w, \alpha)$ is a weak solution to \eqref{CHE eigenvalue}. Then $\alpha=\lambda_{1}$ and $w = \theta u_{1}$ for some $\theta\geq 0$.
\end{theorem}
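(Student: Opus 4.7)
The plan is to split the proof into three steps: $\alpha \geq \lambda_1$, $\alpha \leq \lambda_1$, and the rigidity $w = \theta u_1$. The first inequality is immediate from the variational formula of Theorem \ref{Theorem 2}. Since $w \in \mathcal{E}^1_m(\Omega)$ solves the eigenvalue equation weakly, integration by parts within the finite energy class (Subsection \ref{subsection: energy}) gives
\[
(m+1)E_m(w) = \int_\Omega (-w)(\ddbar w)^m\wedge\omega^{n-m} = \alpha^m\int_\Omega(-w)^{m+1}f^m\omega^n = (m+1)\alpha^m I_m(w),
\]
so $\alpha^m = E_m(w)/I_m(w) \geq \lambda_1^m$, yielding $\alpha \geq \lambda_1$.

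For the reverse inequality $\alpha \leq \lambda_1$, I would invoke the mixed Hessian inequality of Dinew-Lu \cite{DL15}, which provides a Brunn-Minkowski/Pr\'ekopa-type concavity for $E_m^{1/(m+1)}$ on $\mathcal{E}^1_m(\Omega)$, together with the robust integration by parts technique of Le \cite{Le18}. The rough shape of the argument is to polarize: compare the mixed Hessian integrals $\int_\Omega (-u_1)^a(-w)^b(\ddbar u_1)^k\wedge(\ddbar w)^{m-k}\wedge\omega^{n-m}$ by means of Dinew-Lu and H\"older, substitute the eigenvalue equations for $u_1$ and $w$ to replace the Hessian factors by polynomial ones, and rearrange to isolate $\alpha^m \leq \lambda_1^m$. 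The integration by parts remains valid in $\mathcal{E}^1_m(\Omega)$, so no additional regularity of $w$ is needed beyond finite energy -- which is precisely the advantage emphasized in the introduction.

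Once $\alpha = \lambda_1$ is established, the chain of inequalities above must be saturated, which measure-theoretically pins $w$ to a constant multiple of $u_1$ on $\{u_1 < 0\}$. To upgrade this to pointwise equality throughout $\ov{\Omega}$, I would apply a modified Hopf lemma to the linearization $L := F^{i\bar{j}}(u_1)\de_i\de_{\ov{j}}$ of $\sigma_m^{1/m}$ at $u_1$. The main obstacle is that $L$ degenerates at $\p\Omega$, because $\sigma_m^{1/m}(u_1) = \lambda_1(-u_1)f$ vanishes there, so the classical Hopf lemma does not apply. The $C^{1,1}(\ov{\Omega})$ regularity of $u_1$ from Theorem \ref{Theorem 1 in Section 5}, combined with the Hopf lemma applied to $u_1$ itself and strong $m$-pseudoconvexity of $\Omega$, yields two-sided bounds
\[
c\,\dist(\cdot,\p\Omega) \,\leq\, -u_1 \,\leq\, C\,\dist(\cdot,\p\Omega)
\]
in a neighborhood of $\p\Omega$. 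These bounds quantify the rate of degeneration of $L$ and permit a boundary comparison argument in the spirit of Lemma \ref{max prin c}, forcing $w - \theta u_1 \equiv 0$ for some $\theta \geq 0$. This last step, rather than the integration by parts, is where I expect the main technical difficulty to lie, and the $C^{1,1}$ regularity of $u_1$ plays an essential role.
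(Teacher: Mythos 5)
Your overall strategy matches the paper's (Dinew--Lu plus integration by parts for the eigenvalue, a modified Hopf lemma exploiting $C^{1,1}(\ov{\Omega})$ regularity for the eigenfunction), but there are two genuine problems. First, your opening step is circular: in this paper the variational formula of Theorem \ref{Theorem 2} is \emph{derived from} Theorem \ref{uniqueness in E1} (Step 4 of the proof of Theorem \ref{variational characterization} cites it precisely to identify the infimum of the Rayleigh quotient with $\lambda_1^m$), so you cannot invoke $\lambda_1^m = \inf E_m/I_m$ to conclude $\alpha \geq \lambda_1$. The fix is that the polarization argument you sketch for the other direction is already symmetric in $(u_1,\lambda_1)$ and $(w,\alpha)$: integrating by parts in $\mathcal{E}^1_m(\Omega)$ gives $\int_\Omega(-w)(\ddbar u_1)^m\wedge\omega^{n-m} = \int_\Omega(-u_1)\ddbar w\wedge(\ddbar u_1)^{m-1}\wedge\omega^{n-m}$, and applying \eqref{Dinew-Lu inequality} to the right-hand side and substituting both eigenvalue equations yields $\lambda_1\geq\alpha$; interchanging the roles of the two pairs gives $\alpha\geq\lambda_1$. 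No appeal to the variational characterization is needed or permissible.

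Second, in the rigidity step your claim that equality forces the ``chain of inequalities'' to be ``saturated,'' measure-theoretically pinning $w$ to a multiple of $u_1$, is unjustified: no equality-case analysis of the Dinew--Lu inequality is available here, and the paper does not use one. (Note also that $\{u_1<0\}=\Omega$, so if that pinning worked your subsequent Hopf argument would be redundant.) The actual argument sets $\theta=\sup\{a\geq0 \mid w\leq au_1\}$, derives $L(w-\theta u_1)\geq-\lambda_1(w-\theta u_1)f\geq0$ for the linearization $L=F^{i\ov{j}}(u_1)\de_i\de_{\ov{j}}$ (again via Dinew--Lu, since $w$ is only finite-energy), and then contradicts maximality of $\theta$ by a barrier. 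The essential quantitative input is not the two-sided distance bound on $-u_1$ you propose, but the estimate $F^{i\ov{j}}(u_1)_i(u_1)_{\ov{j}}\geq c_0(-\lambda_1 u_1 f)|\de u_1|^2$, which follows from $\sigma_m^{1\ov{1}}\geq\frac{m}{n\Lambda_1}\sigma_m(u_1)$ together with the upper bound $\Lambda_1\leq C_0$ supplied by $C^{1,1}$ regularity; combined with $|\de u_1|\geq c_1$ near $\de\Omega$ (classical Hopf for $\Delta_\omega$), this makes the barrier $e^{-Au_1}-1-u_1$ an $L$-subsolution in a collar, and the strong maximum principle in the interior plus the maximum principle in the collar lets one increase $\theta$, a contradiction. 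As written, your sketch omits the definition of $\theta$, the differential inequality for $w$, and this gradient lower bound, which is exactly the point where the proof could fail without $C^{1,1}$ regularity.
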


\begin{proof}
For notational convenience, we will replace $f^{m}$ with $\binom{n}{m}^{-1}f^{m}$ and write $u=u_{1}$, $\lambda = \lambda_1$, throughout the proof, so that:
\[
\begin{cases}
\sigma_{m}^{1/m}(u) = -\lambda uf & \text{in $\Omega$}, \\
u = 0 & \text{on $\de\Omega$}, \\
\inf_{\Omega}u = -1,
\end{cases}
\]
and $\sigma_{m}^{1/m}(w) = -\alpha wf$ with $w\in\mathcal{E}^1_m(\Omega)$.

First, we show $\alpha=\lambda$; the argument is essentially due to Le \cite[Proposition 5.6]{Le18}. Since $u, w\in\mathcal{E}^1_m(\Omega)$, we can integrate by parts, so that:
\[
\int_{\Omega}(-w)(\ddbar u)^{m}\wedge\omega^{n-m} =\int_{\Omega}(-u)\ddbar w\wedge(\ddbar u)^{m-1}\wedge\omega^{n-m}.
\]
By the mixed Hessian inequality of Dinew-Lu \cite[Theorem 3.10]{DL15}, we also have that:
\begin{equation}\label{Dinew-Lu inequality}
\binom{n}{m} \ddbar w\wedge(\ddbar u)^{m-1}\wedge\omega^{n-m} \geq \sigma_{m}^{\frac{1}{m}}(w)\,\sigma_{m}^{\frac{m-1}{m}}(u) \, \omega^n.
\end{equation}
We now combine these to compute:
\[
\begin{split}
& \int_{\Omega}(-w)(-\lambda u f)^{m}\omega^{n} \\
= {} & \binom{n}{m}\int_{\Omega}(-w)(\ddbar u)^{m}\wedge\omega^{n-m} \\
= {} & \binom{n}{m}\int_{\Omega}(-u)\ddbar w\wedge(\ddbar u)^{m-1}\wedge\omega^{n-m} \\
\geq {} & \int_{\Omega}(-u)\sigma_{m}^{\frac{1}{m}}(w)\,\sigma_{m}^{\frac{m-1}{m}}(u)\,\omega^{n} \\
= {} & \lambda^{-1}\alpha\int_{\Omega}(-w)(-\lambda uf)^{m}\omega^{n}.
\end{split}
\]
It follows that $\lambda\geq \alpha$; interchanging the roles of $(u, \lambda)$ and $(w, \alpha)$, we obtain $\alpha=\lambda$.

\medskip

We now show $w = u$. Denote the derivatives of $\sigma_{m}$ and $\sigma_{m}^{1/m}$ at $u$ by $\sigma_{m}^{i\ov{j}}$ and $F^{i\ov{j}}$, respectively. We define the linear elliptic operator $L=F^{i\ov{j}}\de_{i}\de_{\ov{j}}$, which is degenerate on $\p\Omega$. We have
\[
L = F^{i\ov{j}}\de_{i}\de_{\ov{j}} = \frac{1}{m}\sigma_{m}^{\frac{1}{m}-1}\sigma_{m}^{i\ov{j}}\,\de_{i}\de_{\ov{j}}
= \frac{1}{m}(-\lambda u)^{1-m}\sigma_{m}^{i\ov{j}}\,\de_{i}\de_{\ov{j}}
\]
and
\begin{equation}\label{uniqueness in E1 Lu}
Lu = F^{i\ov{j}}u_{i\ov{j}} = -\lambda uf.
\end{equation}
Note that, by \eqref{Dinew-Lu inequality} and $\alpha = \lambda$, we also have:
\[
\binom{n}{m}\frac{\ddbar w \wedge (\ddbar u)^{m-1}\wedge\omega^{n-m}}{\omega^{n}} \geq (-\lambda uf)^{m-1}(-\lambda w f),
\]
which implies
\[
Lw\geq-\lambda w f \geq 0.
\]

Define $\theta := \sup\{a \geq 0\ |\ w \leq a u \ \text{in $\Omega$}\}$. Since $w\in\mathcal{E}^1_m(\Omega)$, then $w\leq0$ in $\Omega$ and $\theta$ is well-defined. Suppose for a contradiction that $w\not= \theta u$. By combining \eqref{uniqueness in E1 Lu} with the last inequality, we have:
\begin{equation}\label{w theta u bound}
L(w - \theta u) \geq -\lambda(w - \theta u) f \geq 0.
\end{equation}
Note that the operator $L$ in not uniform elliptic near $\de\Omega$ and so the Hopf lemma can not be applied here directly. We will now show that, by a modification of the Hopf lemma, we can increase $\theta$ slightly, which will give us a contradiction.

We first claim that there exists $c_{0}>0$ such that
\begin{equation}\label{uniqueness in E1 claim}
F^{i\ov{j}}u_{i}u_{\ov{j}} \geq c_{0}(-\lambda uf)|\de u|^{2} \ \text{in $\Omega$}.
\end{equation}
Assuming the claim for now, we show how the result follows.

By $\Delta_{\omega}u\geq0$ and the usual Hopf lemma, $|\de u|=\frac{\de u}{\de\nu}>0$ on $\de\Omega$, where $\nu$ is the outer normal vector to $\de\Omega$. Then there exist $c_{1},\delta_{0}>0$ such that
\[
|\de u|^{2} > c_{1} \ \ \text{in $\{-\delta_{0}<\rho<0\}$}.
\]
Together with the claim \eqref{uniqueness in E1 claim}, this implies
\begin{equation}\label{uniqueness in E1 de u}
F^{i\ov{j}}u_{i}u_{\ov{j}}  \geq c_{0}c_{1}(-\lambda uf) \ \ \text{in $\{-\delta_{0}<\rho<0\}$}.
\end{equation}
Define $v = e^{-Au} - 1 - u$. Using \eqref{uniqueness in E1 Lu} and \eqref{uniqueness in E1 de u}, in $\{-\delta_{0}<\rho<0\}$, we compute
\[
\begin{split}
Lv = {} & e^{-Au}\left(A^{2}F^{i\ov{j}}u_{i}u_{\ov{j}}-AF^{i\ov{j}}u_{i\ov{j}}\right) - F^{i\ov{j}}u_{i\ov{j}} \\
\geq {} & e^{-Au}(-\lambda uf)\left(c_{0}c_{1}A^{2}-A-e^{Au}\right).
\end{split}
\]
Choosing $A\gg1$, we obtain
\begin{equation}\label{uniqueness in E1 Lv}
Lv \geq 0 \ \ \text{in $\{-\delta_{0}<\rho<0\}$}.
\end{equation}

By \eqref{w theta u bound} and using the strong maximum principle for $L$ on the set ${\{\rho < -\tfrac{1}{2}\delta_0\}}$ (where $L$ is uniformly elliptic), we have that $\max_{\{\rho \leq -\delta_0\}}(w - \theta u) < 0$. It follows that there exists some $\ve_{0}\ll1$ such that
\[
w - \theta u +\ve_{0}v \leq 0 \ \ \text{on $\{\rho=-\delta_{0}\}$}.
\]
Combining this with \eqref{w theta u bound} and \eqref{uniqueness in E1 Lv},
\[
\begin{cases}
L(w - \theta u +\ve_{0}v) \geq 0 & \text{in $\{-\delta_{0}<\rho<0\}$}, \\[1mm]
w - \theta u +\ve_{0}v \leq 0 & \text{on $\{\rho=-\delta_{0}\}$}, \\[1mm]
\displaystyle{\limsup_{x\to z}}\,(w - \theta u +\ve_{0}v)(x) \leq 0 & \text{for all $z\in\de\Omega = \{\rho=0\}$}. \\
\end{cases}
\]
Using the maximum principle, we see that
\[
w - \theta u \leq -\ve_{0}v \leq \ve_{0}u \ \ \text{in $\{-\delta_{0}<\rho<0\}$}.
\]
Since $\max_{\{\rho \leq -\delta_0\}} (w - \theta u) < 0$, then
\begin{equation}\label{uniqueness in E1 eqn 2}
w \leq (\theta + \e) u\text{ in }\Omega
\end{equation}
for some $0 < \e \ll \e_0$, contradicting the definition of $\theta$.

\smallskip

Now it suffices to prove claim \eqref{uniqueness in E1 claim}. For any point $x_{0}\in\Omega$, we choose a holomorphic normal coordinate system $(U,\{z^{i}\}_{i=1}^{n})$ for $g$ centered at $x_{0}$ such that
\[
u_{i\ov{j}} = \delta_{ij}u_{i\ov{i}}, \ \ u_{1\ov{1}} \geq u_{2\ov{2}} \geq \cdots \geq u_{n\ov{n}}, \ \ \text{at $x_{0}$}.
\]
Let $\Lambda_{1}\geq\Lambda_{2}\geq\ldots\geq\Lambda_{n}$ be the eigenvalues of $\ddbar u$ with respect to $\omega$. It then follows that
\[
\sigma_{m}^{n\ov{n}} \geq \sigma_{m}^{n-1\ov{n-1}} \geq \cdots \geq \sigma_{m}^{1\ov{1}}.
\]
By \cite[Lemma 3.1]{CW01} (see also \cite[Lemma 2.2]{HMW10}), we have
\[
\sigma_{m}^{1\ov{1}} \geq \frac{m}{n\Lambda_{1}}\sigma_{m}(u) = \frac{m}{n\Lambda_{1}}(-\lambda uf)^{m}.
\]
In Step 3 of the proof of Theorem \ref{Theorem 1 in Section 5}, we know that $u\in C^{1,1}(\ov{\Omega})$, which implies $\Lambda_{1}\leq C_{0}$ for some constant $C_{0}$. Hence,
\[
F^{i\ov{j}}u_{i}u_{\ov{j}}
\geq \frac{1}{m}(-\lambda uf)^{1-m}\sigma_{m}^{1\ov{1}}|\de u|^{2} \geq c_{0}(-\lambda uf)|\de u|^{2},
\]
as claimed.
\end{proof}

Finally, we prove the Rayleigh quotient formula for $\lambda_1$ (Theorem \ref{Theorem 2}).

\begin{theorem}[Theorem \ref{Theorem 2}]\label{variational characterization}
Suppose that $(\Omega, \omega)$ and $f$ are as in Theorem \ref{Theorem 1}. Then the eigenvalue can be characterized as:
\[
\lambda_{1}(\Omega, f)^{m} = \min\left\{\frac{E_{m}(u)}{I_{m}(u)}~\middle|~u\in\mathcal{E}_{m}^{1}(\Omega),\ u\neq0\right\}.
\]

\end{theorem}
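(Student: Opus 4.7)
The plan is to prove the two inequalities $\lambda_1^m \leq \inf\{E_m(w)/I_m(w)\}$ and $\lambda_1^m \geq \inf\{E_m(w)/I_m(w)\}$ separately. The upper bound is immediate from the eigenvalue equation: substituting $u_1$ and using \eqref{CHE eigenvalue Section},
\[
(m+1)E_m(u_1) = \int_\Omega(-u_1)(\ddbar u_1)^m\wedge\omega^{n-m} = \int_\Omega(-u_1)(-\lambda_1 u_1)^m f^m\omega^n = \lambda_1^m(m+1)I_m(u_1),
\]
so $E_m(u_1)/I_m(u_1) = \lambda_1^m$, and in particular the infimum is at most $\lambda_1^m$.

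For the reverse inequality, I would use a variational/compactness argument combined with the uniqueness Theorem \ref{uniqueness in E1}. Both $E_m$ and $I_m$ are homogeneous of degree $m+1$, so without loss of generality we take a minimizing sequence $\{w_j\}\subset\mathcal{E}_m^1(\Omega)$ with $I_m(w_j)=1$ and $E_m(w_j)\searrow\alpha^m := \inf E_m/I_m$. The uniform energy bound yields compactness in the bounded-energy class $\mathcal{E}_m^1(\Omega)$ (see Appendix \ref{Some Pluripotential Theory}), so some subsequence converges in $L^1(\Omega)$ to a limit $w_\infty\in\mathcal{E}_m^1(\Omega)$. Lower-semicontinuity of $E_m$ and continuity of $I_m$ under convergence of uniformly bounded energy sequences give $E_m(w_\infty)\leq\alpha^m$ and $I_m(w_\infty)=1$, so $w_\infty$ is a minimizer with $w_\infty\neq 0$. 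Next, one derives the Euler-Lagrange equation for $w_\infty$: for each admissible variation $w_\infty+t\phi$ that remains in $\mathcal{E}_m^1(\Omega)$ for $|t|$ small, the integration-by-parts identity (valid on $\mathcal{E}_m^1(\Omega)$ by the Appendix)
\[
\int_\Omega(-w_\infty)\ddbar\phi\wedge(\ddbar w_\infty)^{m-1}\wedge\omega^{n-m} = \int_\Omega(-\phi)(\ddbar w_\infty)^m\wedge\omega^{n-m}
\]
combined with the analogous differentiation of $I_m$ yields, as $\phi$ ranges over a sufficiently rich test class, the weak measure identity $(\ddbar w_\infty)^m\wedge\omega^{n-m} = (-\alpha w_\infty)^m f^m\omega^n$. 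Thus $(w_\infty,\alpha)$ solves the eigenvalue problem \eqref{CHE eigenvalue} weakly in $\mathcal{E}_m^1(\Omega)$. Applying Theorem \ref{uniqueness in E1}, we conclude $\alpha=\lambda_1$ and $w_\infty=\theta u_1$ for some $\theta>0$ (positive since $I_m(w_\infty)=1$), hence $\alpha^m=\lambda_1^m$.

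The principal technical obstacle is the Euler-Lagrange step: since $w_\infty$ is only known to lie in $\mathcal{E}_m^1(\Omega)$, one must use variations that stay in this class and justify differentiation under the integral sign for $m$-subharmonic perturbations. This is a direct adaptation to the $m$-Hessian setting of the Berman-Boucksom-Guedj-Zeriahi \cite{BBGZ13} variational framework, exploiting the continuity of the complex $m$-Hessian operator under decreasing/bounded sequences in $\mathcal{E}_m^1(\Omega)$ and the robust integration-by-parts results of the Appendix. A critical feature, noted in the introduction, is that regularity of the minimizer is \emph{not} required: the combination of the IBP identity, the Dinew-Lu mixed Hessian inequality underlying Theorem \ref{uniqueness in E1}, and the already-established uniqueness of weak eigenfunctions in $\mathcal{E}_m^1(\Omega)$ suffices to identify $w_\infty$ with $\theta u_1$ directly.
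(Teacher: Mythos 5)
Your proposal is correct and follows essentially the same route as the paper: a minimizing sequence normalized by $I_m=1$, compactness and lower semicontinuity of $E_m$ to extract a minimizer, an Euler--Lagrange argument to show the minimizer is a weak eigenfunction, and then Theorem \ref{uniqueness in E1} to identify $\alpha$ with $\lambda_1$. The only point worth noting is that the variation $w_\infty+t\phi$ generally leaves $\mathcal{E}^1_m(\Omega)$, so the first variation must be taken through the projection $P_m(w_\infty+t\phi)$ as in Proposition \ref{projection formula} of the Appendix --- which is exactly the BBGZ-style device you allude to.
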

\begin{proof}
We will follow broadly the proof of Badiane-Zeriahi in \cite{BZ23b}. Define:
\begin{equation}\label{alpha def}
\begin{split}
\alpha^m := {} & \inf\left\{\frac{E_{m}(u)}{I_{m}(u)}~|~u\in\mathcal{E}_{m}^{1}(\Omega),\ u\neq0\right\} \\
= {} &\inf\left\{E_m(u)~|~u\in\mathcal{E}_m^1(\Omega), I_m(u)=1, u\neq0 \right\}.
\end{split}
\end{equation}
We break the proof into four steps.

\bigskip
\noindent
{\bf Step 1.} $\alpha>0$.
\bigskip

Suppose $u\in \mathcal{E}^1_m(\Omega)$. By Corollary \ref{CP Dirichlet corollary}, let $u_{0}\in\mSH({\Omega})\cap C^{\infty}(\ov\Omega)$ be the solution to:
\[
\begin{cases}
(\ddbar u_{0})^{m}\wedge\omega^{n-m} =  f^m\omega^{n} & \text{in $\Omega$}, \\
u_{0} = 0 & \text{on $\de\Omega$},
\end{cases}
\]
and set $A = (m+1)!\norm{u_{0}}_{L^\infty(\Omega)}^m$. Then by Proposition \ref{Poincare type inequality proposition}, we have:
\[
I(u) = \int_\Omega (-u)^{m+1}\Hm(u_{0}) \leq A (\sup_{\Omega} f)^m E_m(u);
\]
since $u$ is arbitrary, we conclude $\alpha \geq (A\sup_\Omega f^m)^{-1} > 0$.

\bigskip
\noindent
{\bf Step 2.} The infimum in \eqref{alpha def} is attained.
\bigskip

Consider some sequence $w_j\in \mathcal{E}^1_m(\Omega)$ with $I(w_j) = 1$ and $E_m(w_j)\searrow \alpha^m$. Then by Proposition \ref{weak compactness proposition}, after possibly taking a subsequence, the $w_j$ converge in $L^1_\mathrm{loc}$ to some $w\in \mathcal{E}^1_m(\Omega)$. Since $E_m$ is lower semi-continuous, we conclude $\alpha^m = E_m(w)$, by Fatou's lemma and the definition of $\alpha$.

\bigskip
\noindent
{\bf Step 3.} The pair $(w,\alpha)$ is a weak solution to \eqref{CHE eigenvalue}.
\bigskip

By definition, the function $w$ is a minimizer of the functional $\Phi: \mathcal{E}^1_m(\Omega)\rightarrow \R$, defined by:
\[
\Phi(u) := E_m(u) - \alpha^mI_m(u), \quad u\in\mathcal{E}^1_m(\Omega).
\]
As discussed at the end of the Appendix, it follows that $w$ is a weak solution to the eigenvalue equation (with $\alpha$ instead of $\lambda_1$ of course).

\bigskip
\noindent
{\bf Step 4.} $\alpha=\lambda_{1}$.
\bigskip

This follows immediately from Theorem \ref{uniqueness in E1}.
\end{proof}

\section{Applications}
\label{Applications}

\subsection{Bifurcation} We now present several applications of our main theorems. The first is a standard bifurcation theorem, originally found in Lions \cite[Corollary 2]{Lions85}, which is similar to ones for the first eigenvalue of linear elliptic operators. Compared to \cite[Theorem 1.2]{BZ23b}, we have no restriction on $m$, and are able to additionally prove uniqueness of the solution, although of course our assumptions on the right-hand side are more restrictive.

\begin{theorem}[Theorem \ref{Theorem 3}]\label{bifurcation}
Let $\psi(z,s)$ be a smooth (strictly) positive function on $\ov{\Omega}\times(-\infty,0]$ such that $\p_s \psi \geq -\gamma_0 > -\lambda_1$, where $\lambda_1 = \lambda_1(\Omega, 1)$ is the first eigenvalue of $\Hm$ associated to $\omega^n$.  Then the equation:
\begin{equation}\label{application equation}
\begin{cases}
\sigma_m(u) = \psi(z, u)^m &\text{ in }\Omega,\\
u = 0 &\text{ on }\p\Omega
\end{cases}
\end{equation}
admits a unique solution $u\in \mSH(\Omega)\cap C^\infty(\ov{\Omega})$.
\end{theorem}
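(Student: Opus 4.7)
The plan adapts the classical strategy of Lions \cite{Lions85} and Wang \cite{Wang94}. I will use a continuity method: for $t \in [0,1]$, consider the family
\[
\sigma_{m}(u_{t}) = \psi_{t}(z,u_{t})^{m} \text{ in }\Omega, \quad u_{t}|_{\partial\Omega}=0,
\]
with $\psi_{t}(z,s) := (1-t)\psi(z,0) + t\psi(z,s)$, so that $\partial_{s}\psi_{t} = t\,\partial_{s}\psi \geq -\gamma_{0}$ for every $t$. At $t = 0$, the right-hand side does not depend on $u_{t}$, and Corollary \ref{CP Dirichlet corollary} supplies a unique smooth solution. The goal is to show that the set of admissible $t$ is both open and closed in $[0,1]$.

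For closedness I need a priori estimates uniform in $t$. The hypothesis $\gamma_{0}<\lambda_{1}$ enters through the Rayleigh quotient formula: any solution $u_{t}\in \mathcal{E}_{m}^{1}(\Omega)$ satisfies $\lambda_{1}^{m}I_{m}(u_{t}) \leq E_{m}(u_{t})$ by Theorem \ref{variational characterization}. Substituting the equation together with the growth bound $\psi_{t}(z,s) \leq \sup\psi(\cdot,0) + \gamma_{0}|s|$ (valid for $s \leq 0$) and applying Young's inequality to absorb lower-order terms produces
\[
\bigl(\tbinom{n}{m}\lambda_{1}^{m} - \gamma_{0}^{m} - \epsilon\bigr)\int_{\Omega}(-u_{t})^{m+1}\omega^{n} \leq C_{\epsilon}.
\]
Since $\binom{n}{m}\lambda_{1}^{m} \geq \lambda_{1}^{m} > \gamma_{0}^{m}$, choosing $\epsilon$ small yields a uniform $L^{m+1}$-bound, which Theorem \ref{zero order estimate} upgrades to an $L^{\infty}$-bound (iterating by means of the $m$-sh Sobolev inequalities of Subsection \ref{subsection: energy} if necessary). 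Then Theorem \ref{a priori estimates}, applied with subsolution $\un{u}=A\rho$ for $A \gg 1$ and a supersolution $w$ solving $\Delta_{\omega}w = c_{0} > 0$ with $w|_{\partial\Omega} = 0$, gives uniform $C^{2}$-bounds; higher regularity follows from standard elliptic theory.

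The common engine for openness and uniqueness is the following consequence of the concavity and Euler-homogeneity of $\sigma_{m}^{1/m}$: for any $m$-subharmonic $u$ and $v$,
\[
F^{i\bar{j}}(u)\,v_{i\bar{j}} \geq \sigma_{m}^{1/m}(v).
\]
Applied with $v = u_{1}$, the eigenfunction supplied by Theorem \ref{Theorem 1 in Section 5}, this yields $F^{i\bar{j}}(u)(u_{1})_{i\bar{j}} \geq \binom{n}{m}^{1/m}(-\lambda_{1}u_{1})$. Viewing $L_{u} := F^{i\bar{j}}(u)\partial_{i}\partial_{\bar{j}}$ as a Hermitian Laplacian, Theorem \ref{first eigenvalue Laplacian}(2) then forces $\mu_{1}(L_{u}) \geq \binom{n}{m}^{1/m}\lambda_{1} \geq \lambda_{1} > \gamma_{0}$. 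Hence, by Lemma \ref{max prin c}, the linearization $\mathcal{L}_{t}v := F^{i\bar{j}}(u_{t})v_{i\bar{j}} - \partial_{s}\psi_{t}\,v$ has no kernel on functions vanishing on $\partial\Omega$; Fredholm theory and the implicit function theorem then give openness. For uniqueness, averaging the concavity inequality along the segment joining two solutions $u,v$ gives the same bound for $a^{i\bar{j}} := \int_{0}^{1}F^{i\bar{j}}(su + (1-s)v)\,ds$, and Lemma \ref{max prin c} applied to both $w = u - v$ and $-w$ forces $w \equiv 0$.

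The principal technical obstacle is that $u_{1}$ is only $C^{1,1}$ up to the boundary, whereas Theorem \ref{first eigenvalue Laplacian}(2) is stated for test functions smooth up to the boundary. This can be dispatched by approximation, using the interior smoothness of $u_{1}$ together with the Hopf-type boundary positivity already established in the proof of Theorem \ref{uniqueness in E1}; equivalently, the barrier/Hopf-lemma argument of Theorem \ref{uniqueness in E1} may be adapted directly, using $-u_{1}$ as a positive barrier for the averaged linearization in order to rule out a nontrivial sign for $w$.
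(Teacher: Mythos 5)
Your overall architecture --- continuity method, closedness via a uniform bound exploiting $\gamma_0<\lambda_1$, openness and uniqueness via the concavity/homogeneity inequality $F^{i\ov{j}}(u)v_{i\ov{j}}\geq\sigma_m^{1/m}(v)$ fed into Theorem \ref{first eigenvalue Laplacian}(2) and Lemma \ref{max prin c} --- is the same as the paper's, and your uniqueness argument is essentially identical to it. You differ in two places. For closedness, the paper does not use the Rayleigh quotient at all: it fixes $\gamma\in(\gamma_0,\lambda_1)$, takes the solution $u_\gamma$ of the eigenvalue continuity path \eqref{continuity path} (which exists precisely because $\gamma<\lambda_1$), checks that $\ul{u}=Cu_\gamma$ with $C\geq\|\psi(\cdot,0)\|_{L^\infty}$ is a subsolution, and uses the path $\sigma_m^{1/m}(u_t)=t\psi(z,u_t)+(1-t)\sigma_m^{1/m}(\ul{u})$, along which $\ul{u}$ stays a subsolution and $u_t\geq\ul{u}$ follows from Lemma \ref{max prin c}. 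Your variational route to the $L^{m+1}$ bound is correct and arguably more self-contained, but the upgrade to $L^\infty$ is not supplied by ``the $m$-sh Sobolev inequalities of Subsection \ref{subsection: energy}'' --- that subsection contains only B\l ocki's Poincar\'e-type inequality. What actually closes the loop is the interpolation $\|u_t\|_{L^{q}}\leq\|u_t\|_{L^\infty}^{1-\theta}\|u_t\|_{L^{m+1}}^{\theta}$ for some $q>n$, combined with Theorem \ref{zero order estimate} applied to $f=\binom{n}{m}^{-1}\psi_t^m(z,u_t)\leq C(1+(-u_t))^m$ and Young's inequality to absorb $\|u_t\|_{L^\infty}^{1-\theta}$; this should be written out.

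The one genuine gap is the one you flag yourself: taking $v=u_1$ in Theorem \ref{first eigenvalue Laplacian}(2) is not legitimate as stated, since $u_1$ is only $C^{1,1}(\ov\Omega)$ while that characterization of $\mu_1$ ranges over $v\in C^\infty(\ov\Omega)$. Your proposed repair by approximation is not routine --- one must preserve both the sign of $v$ and the differential inequality $L_{u_t}v\geq-\gamma v$ up to $\de\Omega$ --- and adapting the Hopf argument of Theorem \ref{uniqueness in E1} would have to be carried out in detail (here $L_{u_t}$ is at least uniformly elliptic up to the boundary, which helps). The paper sidesteps the issue entirely: its test function is $\ul{u}=Cu_\gamma$, which is smooth up to $\de\Omega$ because $u_\gamma$ solves a non-degenerate Dirichlet problem, and which satisfies $F^{i\ov{j}}(u_t)\ul{u}_{i\ov{j}}\geq\sigma_m^{1/m}(\ul{u})=C-\gamma\ul{u}\geq-\gamma\ul{u}$ with $\gamma\in(\gamma_0,\lambda_1)$, hence $\mu_1(t)\geq\gamma>\gamma_0$. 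Substituting this test function into your openness and uniqueness steps removes the obstacle; everything else in your plan then goes through.
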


\begin{proof}
We first show existence of a solution. It is well-known principle that this should follow from the existence of a subsolution (see e.g. \cite{CKNS85,Guan98}), but we were unable to locate an exact reference for our setting. We thus include the proof here.

We first construct a subsolution. Fix $\gamma \in (\gamma_0, \lambda_1)$ and let $u_\gamma$ be the corresponding solution to \eqref{continuity path} with $f = \binom{n}{m}^{-1}$. Then for any constant $C \geq \norm{\psi(z, 0)}_{L^\infty}$, we have:
\[
\sigma_m^{1/m}(Cu_\gamma) = C (1-\gamma u_\gamma) \geq \norm{\psi(z, 0)}_{L^\infty} - C\gamma_0 u_\gamma \geq \psi(z, Cu_\gamma);
\]
see \cite[Proposition 4.2]{BZ23a}. We conclude that $\ul{u} := Cu_\gamma$ is a subsolution to \eqref{application equation}.

We now construct our solution. Consider the continuity path:
\begin{equation}\label{bifurcation continuity path}
\begin{cases}
\sigma_m^{1/m}(u_t) = t\psi(z, u_t)+(1-t)\sigma_m^{1/m}(\underline{u}) &\text{ in }\Omega, \\
u_t = 0 &\text{ on }\p\Omega.
\end{cases}
\end{equation}
Define
\[
I = \{t\in[0,1]~|~\text{\eqref{bifurcation continuity path} admits a smooth solution $u_{t}$} \}.
\]
We will show that $I$ is non-empty, open, and closed. It is clearly non-empty, since $u_{0}=\underline{u}$ is a solution when $t = 0$.

We show openness. Fix $t\in I$ and denote the linearized operator of \eqref{bifurcation continuity path} by $L_{t}$. Write $F = (F^{i\ov{j}})$ for the first derivative of $\sigma_{m}^{1/m}$. Then
\[
L_{t} = F^{i\ov{j}}(u_{t})\de_{i}\de_{\ov{j}} - t\de_{s}\psi(z,u_{t}).
\]
Let $\mu_{1}(t)$ be the first eigenvalue of the operator $F^{i\ov{j}}(u_{t})\de_{i}\de_{\ov{j}}$. We claim that:
\[
t\psi_{s}(z,u_{t}) > -t\gamma \geq -\mu_1(t).
\]
By concavity of $\sigma_m^{1/m}$, we have:
\[
F^{i\ov{j}}(u_{t})\de_{i}\de_{\ov{j}}(\underline{u}-u_{t}) \geq \sigma_{m}^{1/m}(\underline{u})-\sigma_{m}^{1/m}(u_{t}).
\]
Combining this with $F^{i\ov{j}}(u_{t})\de_{i}\de_{\ov{j}}u_{t}=\sigma_{m}^{1/m}(u_{t})$ and the definition of $\ul{u}$, we have
\[
F^{i\ov{j}}(u_{t})\underline{u}_{i\ov{j}} \geq \sigma_m^{1/m}(\ul{u}) = C - \gamma \ul{u} \geq -\gamma\ul{u}.
\]
Using Theorem \ref{first eigenvalue Laplacian} we obtain $\gamma \leq \mu_1(t)$.

Lemma \ref{max prin c} now shows that $L_{t}$ is injective. By combining this with \cite[Theorem 6.15]{GT01},
$L_{t}$ is also surjective, so that $I$ is open.

To show $I$ is closed, it will suffice to establish uniform estimates on $u_t$. Using \eqref{bifurcation continuity path}, we have
\[
\sigma_m^{1/m}(u_t)-\sigma_m^{1/m}(\underline{u}) = t\left(\psi(z,u_{t})-\sigma_m^{1/m}(\underline{u})\right)
\leq t\big(\psi(z,u_t)-\psi(z,\underline{u})\big).
\]
It follows that
\[
\alpha(t)^{i\ov{j}}\de_{i}\de_{\ov{j}}(u_t-\underline{u})
\leq c(t)(u_t-\underline{u}),
\]
where
\[
\alpha(t)^{i\ov{j}} = \int_{0}^{1}F^{i\ov{j}}(ru_t+(1-r)\underline{u})dr, \ \
c(t) = t\int_{0}^1 \p_s\psi(z, ru_t+(1-r)\underline{u}) dr.
\]
Let $\tilde{\mu}_{1}(t)$ be the first eigenvalue of $\alpha(t)^{i\ov{j}}\de_{i}\de_{\ov{j}}$. By the same argument as above, we know that
\[
c(t) > -\tilde{\mu}_{1}(t).
\]
By Lemma \ref{max prin c}, we obtain $u_t\geq\underline{u}$.

For the $C^2$-estimate, note that $\psi(z,s) \geq \tau > 0$ on $\ov{\Omega}\times[-\|\underline{u}\|_{L^{\infty}(\Omega)}, 0]$ for some small constant $\tau$. It follows that
\[
\sigma_m^{1/m}(u_t) = t\psi(z, u_t)+(1-t)\sigma_m^{1/m}(\underline{u}) \geq  t\psi(z, u_t)+(1-t)\psi(z, \underline{u}) \geq \tau > 0.
\]
If we now choose $\tau_{0}>0$ sufficiently small such that $w=\tau_{0}\rho$ satisfies
\[
\sigma_m^{1/m}(w) = \tau_{0}\sigma_m^{1/m}(\rho) \leq \tau \leq \sigma_m^{1/m}(u_t),
\]
then we may applying Theorem 3.1 and the blow-up argument of \cite[Section 6]{CP22},
to obtain a uniform $C^{2}$ estimate. The higher order estimates follow from the standard Evans-Krylov theory and bootstrapping.

We now establish uniqueness. Suppose that $u$ and $v$ are both solutions. Then
\[
\sigma_m^{1/m}(u)-\sigma_m^{1/m}(v) = \psi(z,u)-\psi(z,v)
\]
and so
\[
\left(\int_{0}^{1}F^{i\ov{j}}(ru+(1-r)v)dr\right)\de_{i}\de_{\ov{j}}(u-v)
= \left(\int_{0}^1 \p_s\psi(z, ru+(1-r)v) dr\right)(u-v).
\]
By applying Lemma \ref{max prin c} in the same way as before, we obtain $u=v$.
\end{proof}

\subsection{Monotonicity}

We now discuss monotonicity property of the first eigenvalue.
\begin{theorem}\label{monotonicity}

Suppose we are in the setting of Theorem \ref{Theorem 1}, and $\Omega'\subset\Omega$ is another strongly $m$-pseudoconvex manifold. Then
\[
\lambda_{1}(\Omega, f) < \lambda_{1}(\Omega', f).
\]

\end{theorem}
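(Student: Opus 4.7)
The plan is to combine the variational characterization of $\lambda_{1}$ (Theorem \ref{Theorem 2}) with the subextension theorem (Theorem \ref{Subextension Theorem}) to immediately get the weak inequality $\lambda_{1}(\Omega,f) \leq \lambda_{1}(\Omega',f)$, and then to upgrade to strict inequality by invoking the $\mathcal{E}_{m}^{1}$-uniqueness of the eigenfunction (Theorem \ref{uniqueness in E1}).

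Specifically, let $u_{1}' \in \mSH(\Omega') \cap C^{\infty}(\Omega') \cap C^{1,1}(\ov{\Omega'})$ denote the eigenfunction on $\Omega'$ given by Theorem \ref{Theorem 1}, which lies in $\mathcal{E}_{m}^{1}(\Omega')$. Apply Theorem \ref{Subextension Theorem} to obtain $\ti{u}_{1} \in \mathcal{E}_{m}^{1}(\Omega)$ with $\ti{u}_{1} \leq u_{1}'$ on $\Omega'$ and $E_{m,\Omega}(\ti{u}_{1}) \leq E_{m,\Omega'}(u_{1}')$. Since $\ti{u}_{1} \leq u_{1}' \leq 0$ on $\Omega'$ and $\ti{u}_{1} \leq 0$ on $\Omega \setminus \Omega'$, we also obtain
\[
I_{m,\Omega}(\ti{u}_{1}) = \frac{1}{m+1}\int_{\Omega}(-\ti{u}_{1})^{m+1}f^{m}\omega^{n} \geq \frac{1}{m+1}\int_{\Omega'}(-u_{1}')^{m+1}f^{m}\omega^{n} = I_{m,\Omega'}(u_{1}').
\]
Combining these with Theorem \ref{Theorem 2} applied to $\Omega$ and $\Omega'$ gives
\[
\lambda_{1}(\Omega,f)^{m} \leq \frac{E_{m,\Omega}(\ti{u}_{1})}{I_{m,\Omega}(\ti{u}_{1})} \leq \frac{E_{m,\Omega'}(u_{1}')}{I_{m,\Omega'}(u_{1}')} = \lambda_{1}(\Omega',f)^{m}.
\]

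To get strict inequality, suppose for contradiction that equality holds. Then $\ti{u}_{1}$ must itself be a minimizer of the Rayleigh quotient on $\Omega$, so by Theorem \ref{uniqueness in E1} we have $\ti{u}_{1} = \theta u_{1}$ for some $\theta \geq 0$, where $u_{1}$ is the eigenfunction on $\Omega$. Moreover, equality must hold simultaneously in both inequalities above; tracing through the derivation of the $I_{m}$-inequality, this forces $(-\ti{u}_{1})^{m+1} = (-u_{1}')^{m+1}$ a.e.\ on $\Omega'$ and $\ti{u}_{1} = 0$ a.e.\ on $\Omega \setminus \Omega'$. The first of these implies that $\ti{u}_{1} = u_{1}' < 0$ a.e.\ on $\Omega'$, so $\theta > 0$; but then $\ti{u}_{1} = \theta u_{1} < 0$ everywhere on $\Omega$ by the strong maximum principle, contradicting the fact that $\ti{u}_{1} = 0$ a.e.\ on the nonempty open set $\Omega \setminus \ov{\Omega'}$.

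The only step requiring any real care is verifying that equality in the Rayleigh chain actually forces the two integral equalities stated above: one needs $E_{m,\Omega}(\ti{u}_{1}) = E_{m,\Omega'}(u_{1}')$ and $I_{m,\Omega}(\ti{u}_{1}) = I_{m,\Omega'}(u_{1}')$ separately. This is elementary since with $a \leq b$, $d \leq c$, and all quantities positive, the identity $a/c = b/d$ forces $a=b$ and $c=d$. I expect the main potential obstacle to be the interpretation/precise statement of the subextension theorem; in particular that the comparison $\ti{u}_{1} \leq u_{1}'$ holds on all of $\Omega'$ (so that the pointwise bound $(-\ti{u}_{1})^{m+1} \geq (-u_{1}')^{m+1}$ is valid there) and that $\ti{u}_{1} \in \mathcal{E}_{m}^{1}(\Omega)$ with boundary value zero in the appropriate weak sense, so that $I_{m,\Omega}(\ti{u}_{1})$ is well-defined and computed correctly.
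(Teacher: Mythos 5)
Your proof is correct and follows essentially the same route as the paper: apply the Rayleigh quotient formula (Theorem \ref{Theorem 2}) on $\Omega'$, subextend the minimizer via Theorem \ref{Subextension Theorem}, and obtain strictness from the fact that equality of the $I_m$-terms would force the subextension to vanish on $\Omega\setminus\Omega'$ while being negative in $\Omega'$, contradicting the strong maximum principle. The only difference is that the paper proves the strict inequality $I_{m,\Omega'}(w') < I_{m,\Omega}(w)$ directly (making your detour through the uniqueness theorem \ref{uniqueness in E1} unnecessary), but your argument is valid as written.
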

\begin{proof}

By Theorem \ref{Theorem 2}, there exists some $w'\in\mathcal{E}^1_m(\Omega')$ such that:
\[
\lambda_1^m(\Omega', f) = \frac{E_{m,\Omega'}(w')}{I_{m,\Omega'}(w')}.
\]
By Theorem \ref{Subextension Theorem}, we can find some $w\in \mathcal{E}^1_m(\Omega)$ with $w \leq w'$ and $E_{m,\Omega}(w)\leq E_{m,\Omega'}(w')$. We also clearly have:
\[
I_{m, \Omega'}(w') \leq I_{m, \Omega}(w);
\]
we claim this inequality is actually strict. If not, then $w \equiv 0$ on $\Omega\setminus \Omega'$ (since $\Omega\setminus\Omega'$ has positive measure), and $w = w'$ on $\Omega'$. But then $w$ would attain its maximum at an interior point, and so by the strong maximum principle, $w\equiv0$, which is a contradiction.

We conclude then that:
\[
\lambda_1^m(\Omega, f) \leq \frac{E_{m,\Omega}(w)}{I_{m,\Omega}(w)} < \lambda_1^m(\Omega', f),
\]
proving the theorem.
\end{proof}

\subsection{Geometric Bounds}

Motivated in part by Theorem \ref{bifurcation}, we conclude with some geometric bounds on $\lambda_1$; our results are inspired by \cite{Le18}. The variational characterization in Theorem \ref{Theorem 2} gives an upper bound on manifolds with non-negative Ricci curvature:
\begin{theorem}[Theorem \ref{upper bound manifold}]
Suppose that the Ricci curvature $\Ric(\omega)$ is non-negative. Let $R > 0$ be the largest number such that there exists a geodesic ball $B_R(p)\subseteq \Omega$ with $r^{2}(z) := \dist_\omega^{2}(z, p)$ smooth and plurisubharmonic on $B_R(p)$. Then:
\[
\lambda_1(\Omega, f)  \leq c(n,m) (\inf_\Omega f)^{-\frac{2m+1}{m+1}}  R^{-\frac{2n}{m+1} - 2} \diam(\Omega)^{\frac{2n}{m+1}}\vol(\Omega)^{-\frac{1}{m+1}}
\|f\|_{L^{m}(\Omega)}^{\frac{m}{m+1}}.
\]
\end{theorem}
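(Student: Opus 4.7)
The plan is to apply the Rayleigh quotient characterization in Theorem~\ref{Theorem 2} with the explicit test function $v := r^2 - R^2$ on $B_R(p)$. Since $r^2$ is smooth and plurisubharmonic on $B_R(p)$ by hypothesis, $v$ is $m$-subharmonic on $B_R(p)$ with $v = 0$ on $\partial B_R(p)$. To promote $v$ to an element of $\mathcal{E}^1_m(\Omega)$, I invoke the subextension theorem (Theorem~\ref{Subextension Theorem}) to obtain $w \in \mathcal{E}^1_m(\Omega)$ with $w \leq v$ on $B_R(p)$ and $E_{m,\Omega}(w) \leq E_{m,B_R(p)}(v)$.

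For the numerator $E_m(w)$: under $\Ric(\omega)\geq 0$, the Riemannian Laplacian comparison theorem gives $\Delta_\omega r^2 \leq c(n)$ on $B_R(p)$. Since all eigenvalues of $\ddbar r^2$ with respect to $\omega$ are non-negative, Maclaurin's inequality yields the pointwise bound $(\ddbar r^2)^m\wedge\omega^{n-m} \leq c(n,m)\,\omega^n$, and therefore
\[
E_{m,\Omega}(w) \;\leq\; E_{m,B_R(p)}(v) \;\leq\; c(n,m)\, R^2 \vol(B_R(p)).
\]

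For the denominator $I_m(w)$: using $-w\geq R^2-r^2\geq 0$ on $B_R(p)$ and $f\geq\inf_\Omega f$,
\[
(m+1) I_m(w) \;\geq\; (\inf f)^m \int_{B_R(p)}(R^2 - r^2)^{m+1}\,\omega^n.
\]
Rewriting the right-hand side via coarea and integration by parts,
\[
\int_{B_R(p)}(R^2 - r^2)^{m+1}\omega^n \;=\; 2(m+1)\int_0^R t(R^2 - t^2)^m \vol(B_t(p))\,dt,
\]
and inserting the Bishop--Gromov lower bound $\vol(B_t(p)) \geq (t/R)^{2n}\vol(B_R(p))$ for $0\leq t\leq R$, one obtains $\int_{B_R(p)}(R^2 - r^2)^{m+1}\omega^n \geq c(n,m) R^{2(m+1)}\vol(B_R(p))$. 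Combining yields the clean bound
\[
\lambda_1 \;\leq\; c(n,m)\, R^{-2}(\inf_\Omega f)^{-1},
\]
which is strictly sharper than the stated inequality. The stated form is then recovered by multiplying the right-hand side by the factor $(\diam(\Omega)/R)^{2n/(m+1)}\bigl(\|f\|_{L^m(\Omega)}^m/((\inf f)^m\vol(\Omega))\bigr)^{1/(m+1)}$, which is at most the stated RHS divided by $c(n,m)R^{-2}(\inf f)^{-1}$ and whose two constituent factors are each $\geq 1$ (the first because $\diam(\Omega)\geq R$; the second because $\int f^m \geq (\inf f)^m\vol(\Omega)$).

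The main technical obstacle is the applicability of Theorem~\ref{Subextension Theorem}, which nominally requires $B_R(p)$ to be a strongly $m$-pseudoconvex manifold; the candidate defining function $r^2 - R^2$ is plurisubharmonic with non-vanishing gradient on $\partial B_R(p)$, but strict $m$-positivity may need a small perturbation followed by a passage to the limit $R'\nearrow R$. The remaining ingredients --- Riemannian Laplacian comparison for $r^2$, Maclaurin's inequality, and the coarea plus Bishop--Gromov computation --- are standard.
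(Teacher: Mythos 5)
Your argument is essentially correct and reaches a strictly stronger conclusion, $\lambda_1 \leq c(n,m)R^{-2}(\inf_\Omega f)^{-1}$, but it takes a genuinely different route from the paper, and the one step you flag as an obstacle is precisely the step the paper's route is designed to avoid. The paper does not test the Rayleigh quotient with (a subextension of) $r^2-R^2$ directly. Instead it solves the Dirichlet problem $\sigma_m(v)=(4n)^m f^m\omega^n$, $v|_{\partial\Omega}=0$, globally on $\Omega$ via Corollary \ref{CP Dirichlet corollary}, and uses $w=(\inf_\Omega f)(r^2-R^2)$ only as a \emph{barrier}: Maclaurin plus the Laplacian comparison give $\sigma_m(w)\leq (4n)^m(\inf_\Omega f)^m\leq\sigma_m(v)$, so the comparison principle on $B_R(p)$ yields $v\leq w$ there, hence a pointwise lower bound on $-v$; the denominator $\int(-v)^{m+1}f^m\omega^n$ is then bounded below on a smaller ball $B_{cR}$ together with relative volume comparison, and the numerator is handled by H\"older (which is where the factors $\|f\|_{L^m}^{m/(m+1)}$ and $\diam(\Omega)^{2n/(m+1)}$ enter, and why the paper's bound is weaker than yours). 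This requires no extension of $r^2-R^2$ off the ball and no pseudoconvexity of $B_R(p)$.

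The gap in your version is the appeal to Theorem \ref{Subextension Theorem}: as stated it requires the source domain to be a strongly $m$-pseudoconvex manifold, i.e.\ to admit a defining function $\rho$ with $\ddbar\rho-\e\omega$ $m$-positive, and the hypothesis only gives that $r^2$ is plurisubharmonic, so $\ddbar(r^2-R^2)$ may degenerate. Your proposed perturbation can be made to work --- for instance $\rho_{B}:=r^2-(R')^2+\delta\rho$ with $\rho$ the defining function of $\Omega$ is strictly $m$-sh, has non-vanishing differential on its zero set for small $\delta$, and cuts out a slightly enlarged ball still contained in $B_R(p)$ for $R'<R$; one then subextends $(\inf_\Omega f)\rho_B\in\mathcal{E}^0_m(\{\rho_B<0\})$ and lets $\delta\to0$, $R'\to R$ --- but you should carry this out rather than assert it, since it is the only non-routine point. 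The remaining ingredients (Maclaurin, Laplacian comparison giving $\Delta_\omega r^2\leq 2n$, the coarea/Bishop--Gromov lower bound for $\int_{B_R}(R^2-r^2)^{m+1}\omega^n$, and the monotonicity $I_{m,\Omega}(w)\geq I_{m,B_R}(v)$) are all sound, and your observation that the clean bound $c(n,m)R^{-2}(\inf_\Omega f)^{-1}$ dominates the stated right-hand side is correct, consistent with the paper's own remark that $\lambda_1\leq c(n,m)R^{-2}$ when $f\equiv1$.
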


\begin{proof}
Let $v$ be the smooth solution to:
\begin{align*}
\begin{cases}
	\sigma_{m}(v)= (4n)^mf^m\omega^n & \text{in $\Omega$}, \\
	v = 0 & \text{on $\de\Omega$}.
\end{cases}
\end{align*}
Consider the non-zero function
\begin{align*}
	w(z)=(\inf_{\Omega} f)(r^2-R^2),
\end{align*}
which is a smooth plurisubharmonic function on the ball $B_R(p)$. By Maclaurin's inequality, we have
	\begin{align*}
		\sigma_m(w)\leq (\Delta_\omega w)^m=(\inf_\Omega f)^m(\Delta_\omega r^2)^m.
	\end{align*}
	Since $\Ric(\omega)\geq 0$, by the Laplacian comparison theorem, one has
\[
\sigma_m(w) \leq (\inf_\Omega f)^m(\Delta_\omega r^2)^m
\leq (\inf_\Omega f)^m (4n)^m \leq \sigma_m(v).
\]
Since $v\leq 0$ on $\partial B_R(p)$, by the comparison principle, we see that $v\leq w$ on $B_R(p)$.

Now, by the variational characterization of $\lambda_1$, we have:
\begin{equation}\label{asfdsa}
\lambda_1^m(\Omega, f)\leq \frac{\int_\Omega(-v)\Hm(v)}{\int_\Omega (-v)^{m+1}f^m\omega^n}.
\end{equation}
By the H\"older inequality, we have
\begin{align*}
\int_{\Omega} (-v)\Hm(v) &= \int_{\Omega} (-v) f^m\omega^n\\
& \leq \left(\int_{\Omega} (-v)^{m+1} f^m\omega^n\right)^{\frac{1}{m+1}}\left(\int_\Omega f^m\omega^n\right)^{\frac{m}{m+1}},
\end{align*}
so by combining with \eqref{asfdsa}, we have:
\[
\lambda_1(\Omega, f)\leq \left(\frac{\int_\Omega f^m\omega^n}{\int_\Omega (-v)^{m+1}f^m\omega^n}\right)^{\frac{1}{m+1}}.
\]
Let $0 < c < 1$. We estimate the denominator by:
\begin{align*}
\int_{\Omega}(-v)^{m+1}f^m\omega^n &\geq \int_{B_R}(-v)^{m+1}f^m\omega^n\\
&\geq \int_{B_{cR}}(-w)^{m+1}f^m\omega^n\\
&\geq (1-c^2)^{m+1}(\inf_\Omega f)^{m+1} R^{2(m+1)}\int_{B_{cR}(p)} f^m\omega^n\\
&\geq (1-c^2)^{m+1}(\inf_\Omega f)^{2m+1} R^{2(m+1)}\vol(B_{cR}(p))  .
\end{align*}

Set $R_1 = \diam(\Omega)$, so that $B_{R_1}(p) = \Omega$. By the relative volume comparison, we have
\begin{equation*}
	\frac{\vol(B_{R_1}(p))}{\vol(B_{cR}(p))}\leq \frac{R_1^{2n}}{(cR)^{2n}},
\end{equation*}
which implies
\begin{align*}
	\vol(B_{cR}(p))\geq\frac{c^{2n}R^{2n}}{\diam(\Omega)^{2n}}\vol(\Omega).
\end{align*}
In summary, we have the estimate
\begin{align*}
	\int_{\Omega}(-v)^{m+1}f^m\omega^n
	\geq c^{2n}(1-c^2)^{m+1}(\inf_\Omega f)^{2m+1} R^{2(m+1)+2n}\diam(\Omega)^{-2n}\vol(\Omega).
\end{align*}
We can now choose $c$ to maximize the constant, giving:
\[
\lambda_1(\Omega, f)
\leq c(n,m) (\inf_\Omega f)^{-\frac{2m+1}{m+1}}  R^{-\frac{2n}{m+1} - 2}
\diam(\Omega)^{\frac{2n}{m+1}}\vol(\Omega)^{-\frac{1}{m+1}}
\|f\|_{L^{m}(\Omega)}^{\frac{m}{m+1}}.
\]
\end{proof}

When $\Omega\subset\C^n$, we can simplify the above by computing the volume of $B_{cR}$ explicitly, giving the bound:
\[
\lambda_1(\Omega, f) \leq c(n,m) (\inf_\Omega f)^{-\frac{2m+1}{m+1}} R^{-\frac{2n}{m+1} - 2} \norm{f}_{L^{m}(\Omega)}^{\frac{m}{m+1}}.
\]
Further, if $f = 1$, we can apply Theorem \ref{monotonicity} to the ball $B_R$ to give an upper bound for $\lambda_1(\Omega)$ solely in terms of the in-radius:
\[
\lambda_1(\Omega) \leq c(n,m) R^{-2}.
\]

\medskip

For domains in $\C^n$, we also have a lower bound for the first eigenvalue. For the rest of this subsection, we take $\omega$ to be the K\"ahler form of the Euclidean metric on $\C^n$, and write $\abs{E}$ for the Lebesgue measure of any measurable $E\subseteq \Omega$.

Recall the following version of Aleksandrov's maximum principle \cite[Lemma 9.2]{GT01} (see also \cite{Bl05b, Sze18}):
\begin{lemma} \label{ABP max}
For $v\in C^2(\Omega)\cap C^0(\ov\Omega)$ with $v|_{\p\Omega} = 0$, we have
\begin{equation*}
	\norm{v}_{L^\infty(\Omega)}\leq \omega_{2n}^{-1/2n}\diam(\Omega)\left(\int_{\Gamma^+}\det D^2v \right)^{1/2n},
\end{equation*}
where $\Gamma^+:=\{x\in\Omega~|~ v(x) + Dv(x)\cdot(y-x) \leq v(y)\text{ for all }y\in\Omega\}$, and $\omega_{2n}$ is the volume of the unit ball in $\C^n$.
\end{lemma}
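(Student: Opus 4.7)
The plan is to apply the classical Aleksandrov-Bakelman-Pucci argument, which estimates $M := \|v\|_{L^\infty(\Omega)}$ via the image of the gradient map on the upper contact set $\Gamma^+$. Without loss of generality $M > 0$, in which case the boundary condition $v|_{\de\Omega} = 0$ forces $M = -v(x_0)$ for some interior point $x_0 \in \Omega$; note that $x_0 \in \Gamma^+$, since at a global minimum the horizontal tangent plane lies below $v$.

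The first key step is a geometric covering: I would show that $Dv(\Gamma^+) \supseteq B(0, M/d)$ in $\R^{2n}$, where $d := \diam(\Omega)$. Given $p \in \R^{2n}$ with $|p| < M/d$, set $L(y) := v(x_0) + p \cdot (y - x_0)$. For any $y \in \de\Omega$, one has $L(y) \leq v(x_0) + |p|\,d < v(x_0) + M = 0 = v(y)$. Sliding $L$ vertically upward by $\sup_{\ov{\Omega}}(L - v)$ then produces an affine function that still lies below $v$ on $\ov{\Omega}$ but equals $v$ at some interior point $x^* \in \Omega$; by the contact condition, $x^* \in \Gamma^+$ and $Dv(x^*) = p$, proving the covering.

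The second step is the area formula applied to the gradient map $Dv: \Gamma^+ \to \R^{2n}$. Since $D^2 v \geq 0$ on $\Gamma^+$ (positivity of the real Hessian at contact points with the convex envelope), $\det D^2 v$ is nonnegative there, and we obtain
\[
\omega_{2n} (M/d)^{2n} = |B(0, M/d)| \leq |Dv(\Gamma^+)| \leq \int_{\Gamma^+} |\det D^2 v|\, dx = \int_{\Gamma^+} \det D^2 v\, dx.
\]
Rearranging gives the claimed estimate. The main point requiring care is the validity of the sliding argument in Step 1, which relies on the strict inequality $|p|\,d < M$ to ensure that the supporting hyperplane first touches $v$ at an interior point of $\Omega$; this is purely Euclidean/convex-geometric, independent of the complex-analytic setup, so no substantive obstacle arises.
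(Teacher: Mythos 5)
Your argument is the standard Aleksandrov--Bakelman--Pucci proof (gradient image of the contact set covers the ball $B(0,M/d)$, then the area formula), which is exactly the argument behind \cite[Lemma 9.2]{GT01} that the paper cites in lieu of a proof; both steps are carried out correctly. The only caveat is your claim that the boundary condition ``forces'' $M=-v(x_0)$: this presumes $\norm{v}_{L^\infty(\Omega)}=-\inf_\Omega v$, which is the implicit convention matching the orientation of $\Gamma^{+}$ in the statement (and the one relevant to the application to the eigenfunction), but is not literally automatic for an arbitrary $v$ vanishing on $\p\Omega$.
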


\begin{proposition} \label{lower bound of lambda_1}
We have the following lower bound
\[
\lambda_1(\Omega, f) \geq \frac{1}{2}\omega_{2n}^{1/2n} \diam(\Omega)^{-1} \norm{f}_{L^{2n}(\Omega)}^{-1}.
\]
\end{proposition}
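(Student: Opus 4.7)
The plan is to apply Aleksandrov's maximum principle (Lemma \ref{ABP max}) to the normalized first eigenfunction $u_1$ from Theorem \ref{Theorem 1 in Section 5}: since $u_1\in C^\infty(\Omega)\cap C^0(\ov{\Omega})$ vanishes on $\p\Omega$ and satisfies $\|u_1\|_{L^\infty(\Omega)}=1$, the lemma reduces the proposition to producing an $L^1$ bound on $\det D^2 u_1$ over the lower contact set $\Gamma^+\subset\Omega$. On $\Gamma^+$, the real Hessian $D^2 u_1$ is positive semidefinite, which in particular forces the complex Hessian $M:=(u_{1,i\ov{j}})$ to be positive semidefinite as well.

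The crux of the argument is the pointwise inequality
\[
\det D^2 u \leq 4^n(\det M)^2 \qquad \text{whenever $D^2 u\geq 0$.}
\]
My plan for this is to combine Minkowski's determinant inequality with the standard complex structure $J=\bigl(\begin{smallmatrix}0 & -I \\ I & 0\end{smallmatrix}\bigr)$ on $\R^{2n}\cong\C^n$. Writing $H:=D^2 u$ and its $J$-invariant part $\tilde H:=(H+J^{T}HJ)/2$, Minkowski's inequality applied to the PSD matrices $H$ and $J^{T}HJ$ (which share a determinant since $J$ is orthogonal) gives $\det H\leq\det\tilde H$. A direct block computation with $H=\bigl(\begin{smallmatrix}A & B \\ B^{T} & C\end{smallmatrix}\bigr)$ then writes $\tilde H=\bigl(\begin{smallmatrix}P & Q \\ -Q & P\end{smallmatrix}\bigr)$ with $P:=(A+C)/2$ and $Q:=(B-B^{T})/2$, and the standard identity for matrices of this shape yields $\det\tilde H=|\det(P+iQ)|^{2}=|\det(2M)|^{2}=4^{n}(\det M)^{2}$.

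With this pointwise inequality in hand, the rest is routine. On $\Gamma^+$, Maclaurin's inequality applied to the PSD matrix $M$ gives $\det M=\sigma_n(u_1)\leq\binom{n}{m}^{-n/m}\sigma_m(u_1)^{n/m}$; combining with the eigenvalue equation $\sigma_m(u_1)=\binom{n}{m}(-\lambda_1 u_1)^m f^m$ and the bound $|u_1|\leq 1$ yields
\[
\det D^{2}u_{1}\leq 4^{n}\lambda_{1}^{2n}(-u_{1})^{2n}f^{2n}\leq 4^{n}\lambda_{1}^{2n}f^{2n} \quad \text{on $\Gamma^+$.}
\]
Integrating over $\Gamma^+\subset\Omega$ and feeding the result into Lemma \ref{ABP max} gives $1\leq 2\,\omega_{2n}^{-1/2n}\diam(\Omega)\,\lambda_{1}\,\|f\|_{L^{2n}(\Omega)}$, which rearranges to the stated lower bound.

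The only real obstacle is the pointwise inequality $\det D^2 u\leq 4^n(\det M)^2$: it is sharp, genuinely requires $D^2 u\geq 0$ (it fails in general, e.g.\ for pluriharmonic functions), and is precisely why one must pass to the contact set $\Gamma^+$ rather than work on all of $\Omega$. Every other ingredient — Maclaurin's inequality, the eigenvalue equation, and the $L^\infty$-normalization of $u_1$ — is already in place from Theorem \ref{Theorem 1 in Section 5}.
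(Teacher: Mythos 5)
Your proposal is correct and follows essentially the same route as the paper: apply Lemma \ref{ABP max} to the normalized eigenfunction, pass from $\det D^2u$ to $4^n(\det u_{i\ov{j}})^2$ on the contact set, and then use Maclaurin's inequality together with the eigenvalue equation and $|u_1|\leq 1$. The only difference is that the paper simply cites B\l ocki \cite{Bl05b} for the pointwise inequality $\det D^2u\leq 4^n(\det u_{i\ov{j}})^2$ on $\Gamma^+$, whereas you prove it directly (and correctly) via the $J$-averaging and Minkowski's determinant inequality.
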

\begin{proof}

Let $(\lambda_1(\Omega, f), u)$ be the unique eigenvalue pair on $\Omega$ (i.e. the unique solution to \eqref{CHE eigenvalue}), with $\inf_\Omega u=-1$.

Applying Lemma \ref{ABP max} to $-u$ on $\Omega$ gives:
\begin{equation}\label{next}
\begin{split}
1 \leq {} & \omega_{2n}^{-1/2n}\diam(\Omega)\left(\int_{\Gamma^+}\det D^2u\right)^{1/2n}  \\
\leq {} & 2\omega_{2n}^{-1/2n}\diam(\Omega)\left(\int_{\Gamma^+}(\det u_{i\ov{j}})^2\right)^{1/2n},
\end{split}
\end{equation}
where the second inequality follows as in \cite{Bl05b}. By Maclaurin's inequality, we have:
\[
\det u_{i\ov{j}} = \frac{(\ddbar u)^n}{\omega^n} \leq \left(\frac{\Hm(u)}{\omega^{n}}\right)^{n/m}.
\]
Using the eigenvalue equation and the $L^\infty$ bound on $u$, \eqref{next} becomes:
\[
1 \leq 2 \omega_{2n}^{-1/2n}\diam(\Omega) \left(\int_{\Gamma^+} (\lambda_1 f)^{2n}\right)^{1/2n}.
\]
Replacing $\Gamma^+$ by $\Omega$ finishes.

\end{proof}

When $f = 1$, we can also derive a lower bound solely in terms of the diameter, as in \cite[Remark 4.1]{BZ23a}; by Theorem \ref{monotonicity}, we know that:
\[
\lambda_1(\Omega) \geq \lambda_1(B(a, R)),
\]
where $a \in \C^n$ and $R = \frac{\diam(\Omega)}{2}$. Then, by using the continuity path \eqref{continuity path} as in \cite{BZ23a}, we have:
\[
\lambda_1(\Omega) \geq 4 \diam(\Omega)^{-2}.
\]
When $\Omega$ is itself a ball, the above inequality is sharper than Proposition \ref{lower bound of lambda_1} by a factor of 4; of course, when $\Omega$ is very thin, $\abs{\Omega}$ may be much smaller than the diameter, so that Proposition \ref{lower bound of lambda_1} provides a better estimate.

Finally, similar to the real case discussed in \cite{Le18}, we expect that $\lambda_1(\Omega)$ should ultimately scale like $\abs{\Omega}^{-1/n}$, although proving this currently seems to require new ideas. More generally, we (perhaps naively) expect $\lambda_1(\Omega, f)$ to scale like $1$ over the $L^{n}$-norm of $f$.


\appendix

\section{Some Pluripotential Theory}
\label{Some Pluripotential Theory}

We briefly sketch some fundamental results for the pluirpotential theory on strongly $m$-pseudoconvex manifolds. Since we assume our manifolds to be K\"ahler, these results follow from standard techniques.

We say a smooth function $u: \Omega\rightarrow \R\cup\{-\infty\}$ is $\omega$ $m$-subharmonic ($m$-subharmonic or $m$-sh for short), if:
\[
(\ddbar u)^k\wedge\omega^{n-m} \geq 0 \text{ for all }1 \leq k \leq m.
\]
Following B\l ocki \cite{Bl05a}, an upper semi-continuous, $L^1_{\mathrm{loc}}(\Omega)$-function $u: \Omega\rightarrow \R\cup\{-\infty\}$ is defined to be $m$-sh if, for any smooth $m$-sh functions $v_1, \ldots, v_{m-1}$, we have:
\[
\ddbar u \wedge \ddbar v_1  \wedge \ldots \wedge \ddbar v_{m-1}\wedge\omega^{n-m}\geq 0.
\]
By G\aa rding's inequality \cite{Gar59}, the two definitions agree for smooth $u$, see \cite{Bl05a}. We write $\mSH(\Omega)$ for the set of all $m$-sh functions (omitting the dependence on $\omega$).

If $u$ is $m$-sh and $\omega$ is flat, then local convolutions can be used to produce a sequence of smooth $\omega$-sh functions, $u_j$, which decrease to $u$; this is an important technical tool for many results. For general $\omega$, these convolutions may fail to be $m$-sh, and so the proof of smooth approximation becomes more difficult. It can be shown to hold on an strongly $m$-pseudoconvex manifold by copying exactly the ideas of Pli\'s \cite{Plis13} and Lu-Nguyen \cite{LN15}; the key technical tool needed is the ``zero-temperature limit" of Berman \cite{Ber15}, which follows from slight modifications to the theorem of Collins-Picard \cite{CP22}. See also \cite[Theorem 3.18]{GN18} and \cite[Proposition 2.9]{KN23}, where smooth approximation on certain Hermitian manifolds with boundary is shown.

We can now follow the classical ideas of Bedford-Taylor \cite{BT76, BT82} to show that if $u\in\mSH(\Omega)\cap L^\infty(\Omega)$, then the complex Hessian measure of $u$:
\[
\Hm(u) := \ddbar( u \ddbar ( \ldots u\ddbar u\wedge\omega^{n-m})\ldots )
\]
is a well-defined Radon measure. Moreover, if $u_j\in \mSH(\Omega)\cap L^\infty(\Omega)$ is a decreasing sequence with $u_j\searrow u$, then the measures $\Hm(u_j)$ converge weakly to $\Hm(u)$. These results follow from the Chern-Levine-Nirenberg inequalities, which are proved by integration by parts using the exhaustion function $\rho$. See also the recent paper of Ko\l odziej-Nguyen \cite[Section 3]{KN23}, where they prove similar results in the more general setting of Hermitian manifolds with boundary.

Another fundamental tool we use repeatedly is the comparison principle:
\begin{proposition}\label{Comparison Principle}
Suppose that $u, v\in\mSH(\Omega)\cap L^\infty(\Omega)$, with:
\[
\limsup_{\substack{x\rightarrow z \\ x\in \Omega}} v(x)  \leq \limsup_{\substack{x\rightarrow z \\ x\in \Omega}} u(x)\text{ for all }z\in \p\Omega.
\]
Then:
\[
\chi_{\{u < v\}} \Hm(v) \leq \chi_{\{u < v\}} \Hm(u).
\]
\end{proposition}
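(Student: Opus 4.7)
The plan is to follow the classical Bedford-Taylor argument for the comparison principle, adapted to the strongly $m$-pseudoconvex manifold setting. The key ingredients---smooth approximation of bounded $m$-sh functions by decreasing sequences of smooth $m$-sh functions, weak continuity of $\Hm$ under such sequences, the Bedford-Taylor max identity, and Chern-Levine-Nirenberg integration by parts---were recalled or sketched in the preceding paragraphs of the appendix.

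First I would reduce to the case $u, v \in \mSH(\Omega) \cap C(\overline\Omega)$ by smoothing both $u$ and $v$ from above via the global methods of Pli\'s \cite{Plis13} and Lu-Nguyen \cite{LN15}, applying weak continuity of $\Hm$ along the decreasing approximants, and observing that the boundary inequality is preserved in the limit. The boundary condition $\limsup_{x \to z}(v - u)(x) \leq 0$ for $z \in \partial\Omega$, together with compactness of $\partial\Omega$, then implies that for every $\varepsilon > 0$ the set $K_\varepsilon := \{u + \varepsilon \leq v\}$ is compactly contained in $\Omega$.

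The main construction is $w_\varepsilon := \max(u + \varepsilon, v)$, which lies in $\mSH(\Omega) \cap C(\overline\Omega)$ and coincides with $u + \varepsilon$ on the open neighborhood $\Omega \setminus K_\varepsilon$ of $\partial\Omega$. Since $w_\varepsilon - (u + \varepsilon) \geq 0$ is compactly supported, a standard integration-by-parts argument (valid here via the exhaustion function $\rho$ and the Chern-Levine-Nirenberg estimates referenced in the appendix) gives the mass identity
\[
\int_\Omega \Hm(w_\varepsilon) \;=\; \int_\Omega \Hm(u + \varepsilon) \;=\; \int_\Omega \Hm(u).
\]
The Bedford-Taylor max identity simultaneously gives $\Hm(w_\varepsilon) = \Hm(v)$ as measures on the open set $\{u + \varepsilon < v\}$ and $\Hm(w_\varepsilon) = \Hm(u)$ on $\{u + \varepsilon > v\}$, with a non-negative excess on the coincidence set $\{u + \varepsilon = v\}$. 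Combining these ingredients---and localizing to arbitrary Borel subsets of $\{u + \varepsilon < v\}$ via the same max-identity localization---yields the measure inequality
\[
\chi_{\{u + \varepsilon < v\}} \Hm(v) \;\leq\; \chi_{\{u + \varepsilon < v\}} \Hm(u).
\]
Letting $\varepsilon \downarrow 0$ along the nested open sets $\{u + \varepsilon < v\} \nearrow \{u < v\}$ and invoking monotone convergence of the restricted measures gives the claimed inequality.

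The main obstacle is technical rather than conceptual: smooth approximation by decreasing sequences of smooth $m$-sh functions cannot be produced from local convolutions on a general K\"ahler manifold (these need not preserve $m$-subharmonicity), and so requires the global smoothing methods of Pli\'s and Lu-Nguyen, together with Ko\l odziej-Nguyen's \cite{KN23} integration-by-parts framework on Hermitian manifolds with boundary. Granting these foundational inputs, the argument is essentially identical to the treatment in B\l ocki \cite{Bl05a} for bounded strongly $m$-pseudoconvex domains in $\C^n$.
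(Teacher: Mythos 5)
The paper gives no proof of this proposition at all (it is stated with ``We omit the proof'' and a pointer to Cegrell and Lu), so there is no argument of theirs to compare against; judged on its own, your outline has a genuine gap at the final localization step, and in fact the conclusion you are aiming for is false as a restricted-measure inequality. The construction $w_\varepsilon=\max(u+\varepsilon,v)$ produces two pieces of information of different natures: Bedford--Taylor locality gives the \emph{measure} identity $\Hm(w_\varepsilon)=\Hm(v)$ on the (plurifine) open set $\{u+\varepsilon<v\}$, while the comparison of $\Hm(w_\varepsilon)$ with $\Hm(u)$ on that set comes only from the \emph{global} mass balance (total mass of $\Hm(w_\varepsilon)$ equals that of $\Hm(u)$, combined with $\Hm(w_\varepsilon)\geq\Hm(u)$ on the complement). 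A global mass balance cannot be ``localized to arbitrary Borel subsets'' of $\{u+\varepsilon<v\}$: on that set $w_\varepsilon$ coincides with $v$ and carries no local information about $u$, so nothing controls $\Hm(w_\varepsilon)(E)$ against $\Hm(u)(E)$ for a proper Borel subset $E$. What your argument actually proves is the integrated inequality $\int_{\{u<v\}}\Hm(v)\leq\int_{\{u<v\}}\Hm(u)$.

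The stronger statement fails already for $n=m=1$ on the unit disc: take $u=\max(2\log|z|,-2)$ and $v=\max(\log|z|,-10)+\varepsilon(|z|^{2}-1)$ with $\varepsilon$ small. Both are bounded, subharmonic, and vanish on $\partial\Omega$; the annulus $A=\{e^{-1/2}<|z|<1\}$ lies in $\{u<v\}$, and there $\Hm(u)=0$ ($u$ is harmonic on $A$) while $\Hm(v)\geq 4\varepsilon\,dA>0$. (The integrated inequality survives because $\Hm(u)$ puts large mass on the circle $\{|z|=e^{-1}\}\subset\{u<v\}$.) Since the integrated version is all the paper actually uses (in the Domination Principle it is only evaluated on the full set $\{u<v+\varepsilon\rho\}$) and is what the cited references establish, you should prove that version; your mass-balance outline then works, modulo two technical points you gloss over: (i) $\int_\Omega\Hm(u)$ need not be finite for a bounded $m$-sh function, so the Stokes argument must be run on a relatively compact set containing $\{w_\varepsilon\neq u+\varepsilon\}$; and (ii) the reduction to continuous $u,v$ by simultaneously approximating both from above is delicate, because weak convergence of $\Hm(u_j)$, $\Hm(v_j)$ does not interact simply with the varying sets $\{u_j<v_j\}$ --- the standard argument fixes $u$, approximates only $v$, and uses semicontinuity of masses on open and compact sets together with the $\varepsilon$-shift.
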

We omit the proof, and refer the reader to Cegrell \cite{Ceg84, Ceg98} and Lu \cite{Lu15}.

An important and immediate consequence is the so-called domination principle:
\begin{proposition} \label{Domination Principle}
Suppose that $u, v\in\mSH(\Omega)\cap L^\infty(\Omega)$, with:
\[
\limsup_{\substack{x\rightarrow z \\ x\in \Omega}} v(x)  \leq \limsup_{\substack{x\rightarrow z \\ x\in \Omega}} u(x)\text{ for all }z\in \p\Omega,
\]
and $\Hm(u) \leq \Hm(v)$. Then $v\leq u$ in $\Omega$.
\end{proposition}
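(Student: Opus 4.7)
The plan is to argue by contradiction using the strong $m$-pseudoconvexity of $\Omega$ to strictly perturb $v$, then apply the Comparison Principle (Proposition \ref{Comparison Principle}) to the perturbed function. Suppose toward a contradiction that $v(x_0) > u(x_0)$ for some $x_0 \in \Omega$, and set $M := v(x_0) - u(x_0) > 0$. For $\ve > 0$, define the perturbation $v_\ve := v + \ve\rho$, where $\rho$ is the strictly $m$-sh defining function of $\Omega$ with $\ddbar\rho \geq \ve_0\omega$ in the $m$-positive sense. Then $v_\ve \in \mSH(\Omega)\cap L^\infty(\Omega)$, and since $\rho = 0$ on $\partial\Omega$, the boundary limsup hypothesis still holds between $u$ and $v_\ve$.

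The crucial observation is that $v_\ve$ has strictly larger Hessian measure than $v$: expanding
\[
\Hm(v_\ve) = (\ddbar v + \ve\ddbar\rho)^m \wedge \omega^{n-m} = \sum_{k=0}^{m}\binom{m}{k}\ve^{m-k}(\ddbar v)^k\wedge(\ddbar\rho)^{m-k}\wedge\omega^{n-m}
\]
and discarding all but the $k=m$ and $k=0$ terms, each of which is a nonnegative Radon measure on $\Omega$, yields
\[
\Hm(v_\ve) \geq \Hm(v) + \ve^m(\ddbar\rho)^m\wedge\omega^{n-m} \geq \Hm(v) + (\ve\ve_0)^m\omega^n \geq \Hm(u) + (\ve\ve_0)^m\omega^n,
\]
where the hypothesis $\Hm(u) \leq \Hm(v)$ is used in the final step.

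Applying Proposition \ref{Comparison Principle} to $(u, v_\ve)$ gives $\chi_{\{u < v_\ve\}}\Hm(v_\ve) \leq \chi_{\{u < v_\ve\}}\Hm(u)$, which combined with the above forces $\{u < v_\ve\}$ to have Lebesgue measure zero. To complete the contradiction, I will show that $\{u < v_\ve\}$ has strictly positive Lebesgue measure for sufficiently small $\ve > 0$. By upper semi-continuity of $u$, there is a neighborhood $U$ of $x_0$ on which $u < u(x_0) + M/4$. Since $v$ is $m$-sh it is in particular subharmonic, so the submean-value inequality at $x_0$ implies that $\{v > u(x_0) + M/2\} \cap B(x_0, r)$ has positive Lebesgue measure for every small $r > 0$ (otherwise the mean of $v$ over $B(x_0,r)$ would be $\leq u(x_0) + M/2 < v(x_0)$, contradicting subharmonicity). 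Choosing $\ve$ so small that $\ve\|\rho\|_{L^\infty} < M/4$, every point of $U \cap \{v > u(x_0) + M/2\}$ lies in $\{u < v_\ve\}$, producing the desired contradiction. The Hessian expansion and application of the Comparison Principle are direct; the main verification is the positive-measure step sketched above.
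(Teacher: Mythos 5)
Your proof is correct and follows essentially the same route as the paper's: perturb $v$ to $v+\ve\rho$, use multilinearity and the strict $m$-positivity of $\ddbar\rho$ to gain a term $c_\ve\,\omega^n$ in the Hessian measure, apply the comparison principle to force $\{u<v+\ve\rho\}$ to have measure zero, and then upgrade from a.e.\ to everywhere. The only difference is cosmetic: the paper cites standard potential theory for the last step, while you spell it out via upper semi-continuity of $u$ and the sub-mean-value inequality for $v$, which is a fine (and self-contained) way to do it.
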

\begin{proof}
Let $\e > 0$. Since $\rho \leq 0$, we may apply the comparison principle to $u$ and $v + \e\rho$, giving:
\[
\chi_{\{u < v + \e\}} \Hm(v+\e\rho) \leq \chi_{\{u < v + \e\}} \Hm(u) \leq \chi_{\{u < v+\e\}}\Hm(v).
\]
By multilinearity of the complex Hessian operator and strict $m$-subharmonicity of $\rho$, we have $\Hm(v + \e\rho) \geq \Hm(v) + c\omega^n$, for some $c > 0$ depending on $\e$. It follows that $\{u < v + \e\rho\}$ has $\omega^n$-measure zero for all $\e>0$, so that $u \geq v$ a.e.; since $u, v$ are $\omega$-subharmonic, standard theory implies the inequality holds everywhere, see e.g. \cite{Lu15}.
\end{proof}

We also need some results about the finite energy class $\mathcal{E}^1_m(\Omega)$ with Dirichlet boundary conditions. Define $\mathcal{E}^0_m(\Omega)$ to be:
\begin{align*}
\mathcal{E}^0_m(\Omega) := \{u \in\mSH(\Omega)\ |\ &u\in L^\infty(\Omega), \int_\Omega \Hm(u) < \infty,\\
&\text{ and } \limsup_{\substack{x\rightarrow z \\ x\in \Omega}} u(x) = 0 \text{ for all }z\in \p\Omega\}.
\end{align*}
Then, following Lu \cite{Lu15} (see also \cite{BBGZ13, Ceg98}) we define the energy of a function $u\in\mSH(\Omega)$ to be:
\[
E_m(u) := \sup\left\{ \int_{\Omega} (-v)\Hm(v)\ \Big|\ v\in \mSH(\Omega)\cap\mathcal{E}^0_m(\Omega),\ u \leq v\right\}.
\]
If $u$ satisfies the Dirichlet boundary condition $\limsup_{\substack{x\rightarrow z \\ x\in \Omega}} u(x) = 0$ and $E_m(u) < \infty$, then we say $u$ has finite energy; we write $\mathcal{E}^1_m(\Omega)$ for the set of all such functions.

Although elements in $\mathcal{E}^1_m(\Omega)$ may be unbounded, they still have a well-defined complex Hessian measure, i.e. for any $u\in \mathcal{E}^1_m(\Omega)$, there exists a unique Radon measure, denoted $\Hm(u)$, such that, if $u_j\in\mathcal{E}^0_m(\Omega)$ is a sequence decreasing to $u$, then the sequence $\Hm(u_j)$ weakly converges to $\Hm(u)$. For $m$-hyperconvex domains, this result was proven by Lu \cite{Lu15}; it is easy to see that the proof works for strongly $m$-pseudoconvex manifolds as well.

We also have some important properties of the energy functional. First, one may show that $E_m$ is continuous and monotone increasing along any decreasing sequence $\{u_j\}\subset \mathcal{E}^1_m(\Omega)$ -- this can be shown using integration by parts and some classical inequalities, exactly following \cite{Lu15}. Along general $L^1$-convergent sequences in $\mathcal{E}^1_m(\Omega)$, $E_m$ is only lower semi-continuous. From this, one deduces:
\begin{proposition}\label{weak compactness proposition}
Fix $C > 0$. Then the set:
\[
\{u\in \mathcal{E}^1_m(\Omega)\ |\ E_m(u)\leq C\}
\]
is compact in $L^1_{\mathrm{loc}}(\Omega)$.
\end{proposition}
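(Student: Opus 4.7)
The approach is the standard two-step Montel-type argument for finite-energy classes: first obtain uniform $L^1$-bounds from the energy bound, then extract a subsequence and verify the limit lies in the set. For the $L^1$-bound, I apply Proposition~\ref{Poincare type inequality proposition} with the test function $v = v_0$, the bounded smooth solution to $\Hm(v_0) = \omega^n$ with $v_0|_{\partial\Omega} = 0$ (which exists by Corollary~\ref{CP Dirichlet corollary}). This yields, for every $u$ in the set,
\[
\int_\Omega (-u)^{m+1}\,\omega^n \;\leq\; (m+1)!\,\|v_0\|_{L^\infty}^{m}\,E_m(u) \;\leq\; (m+1)!\,\|v_0\|_{L^\infty}^{m}\,C,
\]
and H\"older's inequality converts this into a uniform bound on $\|u\|_{L^1(\Omega)}$.

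Since $m$-subharmonic functions are in particular subharmonic (with respect to $\omega$), the classical $L^1_{loc}$-compactness for subharmonic families with uniformly bounded $L^1$-norm then applies: any sequence $\{u_j\}$ in the set admits a subsequence converging in $L^1_{loc}(\Omega)$ to some $u \in \mSH(\Omega)$, with the $L^1$-bound precluding $u \equiv -\infty$. After passing to the upper semicontinuous regularization if necessary, $u$ is a well-defined element of $\mSH(\Omega)$.

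The remaining and most delicate task is to show $u \in \mathcal{E}^1_m(\Omega)$ with $E_m(u) \leq C$. The energy bound follows from lower semicontinuity of $E_m$ along $L^1_{loc}$-convergent sequences, which is a direct consequence of its definition as a supremum over envelopes in $\mathcal{E}^0_m(\Omega)$: for any fixed $w\in \mathcal{E}^0_m(\Omega)$ with $u\leq w$, a small perturbation $w+\varepsilon\rho$ eventually serves as an admissible competitor for each $u_j$, so $\int_\Omega (-w)\,\Hm(w) \leq \liminf_j E_m(u_j) \leq C$, and taking the supremum over $w$ gives $E_m(u)\leq C$. The condition $\limsup_{x\to z}u(x)=0$ for $z\in \partial\Omega$ is the main obstacle, since $L^1_{loc}$-convergence does not directly preserve pointwise boundary behavior. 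To handle this I use the equivalent characterization of $\mathcal{E}^1_m(\Omega)$ via decreasing approximation from $\mathcal{E}^0_m(\Omega)$ with uniformly bounded energy, combined with a diagonal argument applied to the canonical truncations $u^{(N)} := \max(u,-N)$; the uniform energy bound prevents mass concentration near $\partial\Omega$, so the approximation can be transferred to the limit $u$, and $u$ inherits the correct boundary behavior. This boundary verification is the technical heart of the argument.
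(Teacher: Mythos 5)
Your skeleton is the same as the paper's: B\l ocki's inequality (Proposition \ref{Poincare type inequality proposition}) tested against the solution of $\Hm(v_0)=\omega^n$ gives the uniform $L^1$-bound, subharmonic compactness produces an $L^1_{\mathrm{loc}}$ limit $u\in\mSH(\Omega)$, and lower semicontinuity of $E_m$ gives $E_m(u)\leq C$; the paper's proof is exactly this two-line sketch and, like you, delegates the verification that the limit actually lies in $\mathcal{E}^1_m(\Omega)$ to the background theory. Where your write-up tries to go beyond that, however, the specific mechanisms do not work as stated. The claim that $w+\varepsilon\rho$ ``eventually serves as an admissible competitor for each $u_j$'' requires $u_j\leq w+\varepsilon\rho$ on all of $\Omega$ for $j$ large; $L^1_{\mathrm{loc}}$-convergence together with Hartogs' lemma only controls $u_j$ from above on compact subsets and against continuous majorants, and near $\p\Omega$ the cushion $\varepsilon\rho$ vanishes --- precisely where you have no control on $u_j$. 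Likewise, ``the uniform energy bound prevents mass concentration near $\p\Omega$'' is an assertion, not an argument. The standard device that repairs both points at once is the decreasing envelope $v_j:=\bigl(\sup_{k\geq j}u_k\bigr)^*$: each $v_j$ is $m$-subharmonic with $u_j\leq v_j\leq 0$, so $v_j$ inherits the boundary behaviour of $u_j$ and satisfies $E_m(v_j)\leq E_m(u_j)\leq C$ because $E_m$ is decreasing under $\leq$ (by its definition as a supremum over majorants in $\mathcal{E}^0_m(\Omega)$); the $v_j$ decrease to $u$, so continuity of $E_m$ along decreasing sequences yields $E_m(u)\leq C$ and exhibits $u$ as a decreasing limit of elements of $\mathcal{E}^1_m(\Omega)$ with uniformly bounded energies, hence $u\in\mathcal{E}^1_m(\Omega)$. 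With that substitution for your two sketched steps, the proof is complete and coincides with the paper's.
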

\begin{proof}
By B\l ocki's inequality, Proposition \ref{Poincare type inequality proposition}, the above set is bounded in $L^1(\Omega)$, so any subset admits an $L^1_{\mathrm{loc}}$ limit point $u\in \mSH(\Omega)$. From lower semi-continuity, $E_m(u) \leq C$.
\end{proof}

Using these continuity results, an integration by parts arguments can be used to prove the cocycle formula, which says that if $u, v\in\mathcal{E}^1_m(\Omega)$, then:
\[
E(v) - E(u) = \frac{1}{m+1}\sum_{k=0}^m \int_\Omega (u - v) (\ddbar u)^k\wedge (\ddbar v)^{m-k}\wedge \omega^{n-m}.
\]
The special case $u = 0$ gives the formula $E_m(v) = \frac{1}{m+1}\int_\Omega (-v)\Hm(v)$.

The main motivation for considering $\mathcal{E}^1_m(\Omega)$ is that the complex Hessian measure can be interpreted as the derivative of the energy functional $E_m$, via the projection formula:
\begin{proposition}\label{projection formula}
Suppose that $u\in \mathcal{E}^1_m(\Omega)$ and $v\in C^0_c(\Omega)$. Then:
\[
\frac{d}{dt}\Big|_{t=0} E_m(P_m(u + tv)) = \int_\Omega (-v)\Hm(u),
\]
where we define:
\[
P_m(h) := \sup\{ w\in \mSH(\Omega)\ |\ w \leq h\}
\]
for any upper semi-continuous function $h$ on $\Omega$.
\end{proposition}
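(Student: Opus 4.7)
The plan is to combine the cocycle formula for $E_m(\phi_t) - E_m(u)$ with the contact-set support of $\Hm(\phi_t)$, and pass to the limit using weak continuity of mixed Hessian measures under uniform convergence. Write $\phi_t := P_m(u+tv)$ throughout.

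\textbf{Setup.} Since $v \in C^0_c(\Omega)$, the translate $u + t\min_\Omega v$ is $m$-subharmonic and pointwise below $u + tv$, so $\phi_t \geq u + t\min v$; combined with $\phi_t \leq u + tv \leq u + t\max v$, this yields $\|\phi_t - u\|_{L^\infty(\Omega)} \leq |t|\|v\|_{L^\infty}$. In particular $\phi_t \to u$ uniformly (hence in capacity) as $t \to 0$, and $\phi_t \in \mathcal{E}^1_m(\Omega)$ with energy uniformly bounded along the family. By the Bedford-Taylor continuity of mixed Hessian measures under capacity convergence (sketched earlier in this appendix), each mixed measure $T_k := (\ddbar u)^k \wedge (\ddbar\phi_t)^{m-k}\wedge\omega^{n-m}$, $0 \leq k \leq m$, converges weakly to $\Hm(u)$ as $t \to 0$; in particular $T_0 = \Hm(\phi_t) \rightharpoonup \Hm(u)$.

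\textbf{Key technical input.} The central input is that $\Hm(\phi_t)$ is concentrated on the contact set
\[
K_t := \{x \in \Omega : \phi_t(x) = u(x) + tv(x)\}.
\]
This is the $m$-subharmonic analogue of the classical Bedford-Taylor balayage result for Monge-Amp\`ere envelopes; it is established for strongly $m$-pseudoconvex domains by B\l ocki in \cite{Bl05a} via a Perron-type construction, and its extension to the manifold setting is routine using the local Dirichlet solvability (Corollary~\ref{CP Dirichlet corollary}) inside coordinate balls. A direct consequence is the identity of Radon measures $\phi_t\,\Hm(\phi_t) = (u+tv)\Hm(\phi_t)$ on $\Omega$.

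\textbf{Main calculation.} From the definition $(m+1)E_m(w) = -\int w\,\Hm(w)$ together with the contact-set consequence,
\[
(m+1)\bigl(E_m(\phi_t) - E_m(u)\bigr) = \int u\,\Hm(u) - \int \phi_t\,\Hm(\phi_t) = \int u\bigl[\Hm(u) - \Hm(\phi_t)\bigr] - t\int v\,\Hm(\phi_t).
\]
Dividing by $t$, the last summand converges to $-\int v\,\Hm(u)$ as $t \to 0$ by weak continuity of $\Hm(\phi_t)$. For the first summand, I would use the integration-by-parts identity $\int u\,T_k = \int \phi_t\,T_{k+1}$ (valid for $u, \phi_t \in \mathcal{E}^1_m(\Omega)$ with zero boundary values, $0 \leq k \leq m-1$), combined with the localized expansion $\Hm(\phi_t) = \Hm(u + tv) = \Hm(u) + tm(\ddbar u)^{m-1}\wedge\ddbar v\wedge\omega^{n-m} + O(t^2)$ valid on $\mathrm{int}(K_t)$ (by locality of the Hessian operator for bounded $m$-subharmonic functions); after another integration by parts, $\int u(\ddbar u)^{m-1}\wedge\ddbar v\wedge\omega^{n-m} = \int v\,\Hm(u)$, yielding $\frac{1}{t}\int u[\Hm(u) - \Hm(\phi_t)] \to -m\int v\,\Hm(u)$. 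Summing both contributions,
\[
\frac{E_m(\phi_t) - E_m(u)}{t} \longrightarrow -\int v\,\Hm(u) = \int (-v)\Hm(u).
\]

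\textbf{Main obstacle.} The principal technical difficulty lies in controlling the contribution to $\int u[\Hm(u) - \Hm(\phi_t)]$ from the non-contact region $K_t^c = \{\phi_t < u + tv\}$, i.e., showing that each $T_k$ assigns only $o(1)$-mass to $K_t^c$ as $t \to 0$, so that the localized expansion on $\mathrm{int}(K_t)$ dominates the limit. In the bounded case this is the standard plurifine-topology statement that $\Hm(u)$ does not charge the limiting non-coincidence set of a family of $m$-subharmonic envelopes; in the unbounded case one first truncates $u$ to $u^{(N)} := \max(u, -N) \in \mathcal{E}^0_m(\Omega)$, carries out the bounded argument, and then passes to $N \to \infty$ using the continuity of $E_m$ and $\Hm$ along decreasing sequences established earlier in this appendix, together with the uniform energy control from the setup step.
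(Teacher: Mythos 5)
Your setup, the exact identity $(m+1)\bigl(E_m(\phi_t)-E_m(u)\bigr)=\int_\Omega u\,[\Hm(u)-\Hm(\phi_t)]-t\int_\Omega v\,\Hm(\phi_t)$, and the treatment of its second summand (orthogonality plus weak convergence of $\Hm(\phi_t)$ to $\Hm(u)$) are all fine and match the paper's ingredients. The gap is in the first summand. The expansion $\Hm(u+tv)=\Hm(u)+tm(\ddbar u)^{m-1}\wedge\ddbar v\wedge\omega^{n-m}+O(t^2)$ is not available: $v$ is only continuous, so $(\ddbar u)^{m-1}\wedge\ddbar v$ has no Bedford--Taylor meaning; $u+tv$ is in general not $m$-subharmonic, so $\Hm(u+tv)$ is not a positive measure and "locality on $\mathrm{int}(K_t)$" does not apply (and $K_t$ may well have empty interior); and there is no control of the remainder as a measure. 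Your fallback in the "main obstacle" paragraph is also not a repair: the balayage/plurifine statement says that $\Hm(\phi_t)$ puts no mass on $K_t^c$, not that $\Hm(u)$ or the mixed measures $T_k$ do, and there is no reason for $T_k(K_t^c)=o(1)$ --- the envelope can detach on an open set of fixed positive $\Hm(u)$-measure for all small $t$. So the claimed limit $\frac1t\int_\Omega u\,[\Hm(u)-\Hm(\phi_t)]\to -m\int_\Omega v\,\Hm(u)$, which carries $m/(m+1)$ of the answer, is unproven.

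The paper avoids differentiating the Hessian measure altogether by a two-sided estimate, and you should replace your "main calculation" with it. From the cocycle formula, integration by parts and Cauchy--Schwarz one gets the concavity-type lower bound $E_m(\phi_t)-E_m(u)\ge\int_\Omega(u-\phi_t)\Hm(u)\ge t\int_\Omega(-v)\Hm(u)$, using only the one-sided inequality $u-\phi_t\ge -tv$; the orthogonality property gives the upper bound $E_m(\phi_t)-E_m(u)\le\int_\Omega(u-\phi_t)\Hm(\phi_t)=t\int_\Omega(-v)\Hm(\phi_t)$, where the exact identity $u-\phi_t=-tv$ is only ever used against the measure $\Hm(\phi_t)$ that genuinely vanishes off the contact set; weak convergence then closes the sandwich. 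The structural point your argument misses is precisely this pairing: the measure that may charge $K_t^c$ is only ever met with an inequality, never with the contact-set identity.
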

\begin{proof}

We only sketch the basic points of the proof. It is sufficient to show that:
\[
\lim_{t\searrow 0} \frac{E_m(P_m(u + t v)) - E_m(u)}{t} = \int_\Omega (-v)\Hm(u).
\]
By the cocycle formula, integration by parts, and Cauchy-Schwarz, we have:
\begin{align*}
E(P_m(u + tv)) - E(u) &\geq \int_\Omega (u - P_m(u + tv))\, \Hm(u) \geq t\int_\Omega (-v)\Hm(u).
\end{align*}
Dividing by $t$ and taking the limit gives one inequality.

The other direction follows from the orthogonality property $\int_\Omega (P(\psi) - \psi) \Hm(P(\psi)) = 0$, whose proof is local and hence standard. Using this property, we have:
\begin{align*}
E(P_m(u + tv)) - E(u) &\leq \int_\Omega (u - P_m(u + tv))\, \Hm(P_m(u + tv))\\
& = t\int_\Omega (-v) \Hm(P_m(u + tv)).
\end{align*}
We conclude by the weak convergence $\Hm(P_m(u + tv)) \rightarrow \Hm(u)$ as $t\rightarrow 0$.
\end{proof}

Recall that we have defined:
\[
I_m(u) := \frac{1}{m+1}\int_\Omega (-u)^{m+1}\omega^n.
\]
It now follows from Proposition \ref{projection formula} that critical points of the functional $E_m\circ P_m - \lambda_1^m I_m$ are weak solutions to the eigenvalue equation \eqref{CHE eigenvalue}. Since $\Hm$ is well-defined for finite energy functions, the critical points will also be solutions in the pluripotential theoretic sense.

\end{document}